\newcommand\cprime{$'$}
\newcommand\polymake{{\tt polymake}\xspace}
\newcommand\mptopcom{{\tt mptopcom}\xspace}
\newcommand\NN{{\mathbb N}}
\newcommand\RR{{\mathbb R}}
\newcommand\ZZ{{\mathbb Z}}
\newcommand\cF{\mathcal{F}^\Sigma}
\newcommand\cP{\mathcal{P}}
\newcommand\cQ{\mathcal{Q}}
\newcommand\SetOf[2]{\left\{\left.#1\vphantom{#2}\ \right|\ #2\vphantom{#1}\right\}}
\newcommand\smallSetOf[2]{\{{#1}\,|\,{#2}\}}
\DeclareMathOperator{\conv}{conv}
\DeclareMathOperator{\relint}{relint}
\DeclareMathOperator{\rank}{rk}
\DeclareMathOperator{\Dr}{Dr} 
\DeclareMathOperator{\size}{\#}
\DeclareMathOperator{\aff}{aff}
\DeclareMathOperator{\fig}{VF}
\DeclareMathOperator{\Sym}{Sym}
\newcommand{\relation}{\preceq_P\,}
\newcommand{\relationQ}{\preceq_Q\,}
\DeclareMathOperator{\pos}{pos}
\DeclareMathOperator{\matroid}{M}
\DeclareMathOperator{\polytope}{P}
\theoremstyle{plain}
    \newtheorem{theorem}{Theorem}
    \newtheorem{corollary}[theorem]{Corollary}
    \newtheorem{lemma}[theorem]{Lemma}
    \newtheorem{proposition}[theorem]{Proposition}
\theoremstyle{definition}
    \newtheorem{remark}[theorem]{Remark}
    \newtheorem{example}[theorem]{Example}
\title[Multi-splits and tropical linear spaces from nested matroids]{Multi-splits and tropical linear spaces\\ from nested matroids}
\author{Benjamin Schr\"oter}
\address{
  Institut f{\"u}r Mathematik,
  TU Berlin,
  Str.\ des 17. Juni 136, 10623 Berlin, Germany
}
\email{schroeter@math.tu-berlin.de}
\subjclass[2010]{52B40 (05A18, 14T05)}
\keywords{ matroid polytope; subdivision; split; secondary polytope; tropical linear space.}
\begin{document}

\begin{abstract}
   In this paper we present an explicit combinatorial description of a special class of facets of the secondary polytopes of hypersimplices.
   These facets correspond to polytopal subdivisions called multi-splits.
   We show  a relation between the cells in a multi-split of the hypersimplex and nested matroids.
   Moreover, we get a description of all multi-splits of a product of simplices.
   Additionally, we present a computational result to derive explicit lower bounds on the number of facets of secondary polytopes of hypersimplices.
\end{abstract}
\maketitle

\section{Introduction}
\noindent
It is a natural idea to decompose a difficult problem into smaller pieces.
There are many natural situations in which one has fixed a finite set of points, i.e, a \emph{point configuration}.
All convex combinations of these points form a convex body called \emph{polytope}.
For a basic background on polytopes see the monograph \cite{Ziegler:2000} by Ziegler.
It is typical to ask for specific subdivisions or even all subdivisions of a polytope into smaller polytopes whose vertices are points of a given point configuration.
The given points are often the vertices of the polytope.
Famous examples for subdivisions are placing, minimum weight, Delaunay triangulations and regular subdivisions in general. 
For an overview of applications see the monograph \cite{LoeraRambauSantos:2010} by De Loera, Rambau, and Santos.

All subdivisions form a finite lattice with respect to coarsening and refinement.
Gel{\cprime}fand, Kapranov and Zelevinsky showed that the sublattice of regular subdivisions is the face-lattice of a polytope; see \cite{GelfandKapranovZelevinsky:1994}.
This polytope is called \emph{secondary polytope} of the subdivision.
The vertices of the secondary polytope correspond to finest subdivisions, i.e., triangulations.
This polytope can be realized as the convex hull of the GKZ-vectors. 
An important example in combinatorics is the \emph{associahedron}, which is the secondary polytope of a convex $n$-gon; see \cite{CeballosSantosZiegler:2015}.
It is remarkable that the number of triangulations of an $n$-gon is the Catalan number $\frac{1}{n-1}\tbinom{2n-4}{2n-2}$ and the number of diagonals is $\frac{n(n-3)}{2}$, a triangular number minus one.
A subdivision into two maximal cells is a coarsest subdivision and called \emph{split}.
The coarsest subdivisions of the $n$-gon are the splits along the diagonals.
This  example shows that the associahedron has $\frac{1}{n-1}\tbinom{2n-4}{2n-2}$ vertices and only $\frac{n(n-3)}{2}$ facets.
It is expectable that in general the number of facets of the secondary polytope is much smaller than the number of vertices.

Herrmann and Joswig were the first who systematically studied splits and hence facets of the secondary polytope. Herrmann introduced a generalization of splits in \cite{Herrmann:2011}.
A multi-split is a coarsest subdivision, such that all maximal cells meet in a common cell.

The purpose of this paper is to further explore the facet structure of the secondary polytope for two important classes of polytopes --  products of simplices and hypersimplices.
In particular, we investigate their multi-splits.
Triangulations of products of simplices have been studied in algebraic geometry, optimization and game theory; see \cite[Section 6.2]{LoeraRambauSantos:2010}.
An additional motivation to study splits of products of simplices is their relation to tropical convexity \cite{DevelinSturmfels:2004}, tropical geometry and matroid theory.

The focus of our interest is on hypersimplices.
The \emph{hypersimplex} $\Delta(d,n)$ is the slice of the $n$-dimensional $0/1$-cube with the hyperplane $x_1+\ldots+x_n=d$.
Hypersimplices appear frequently in mathematics.
For example, they appear in algebraic combinatorics, graph theory, optimization, analysis and number theory (see \cite[Subsection 6.3.6]{LoeraRambauSantos:2010}), as well as in phylogenetics, matroid theory and tropical geometry.
The latter three topics are closely related, and splits of hypersimplices play an important role in all of them.
Bandelt and Dress \cite{BandeltDress:1992} were the first who studied the split decomposition of a finite metric in  phylogenetic analysis.
Later Hirai \cite{Hirai:2006},  Herrmann, Joswig \cite{HerrmannJoswig:2008} and Koichi \cite{Koichi:2014} developed split decompositions of polyhedral subdivisions. 
In particular they discussed subdivisions of hypersimplices.
The special case of a subdivision of a hypersimplex $\Delta(2,n)$ corresponds to a class of finite pseudo-metrics.
While the matroid subdivisions of $\Delta(2,n)$ are totally split-decomposable and correspond to phylogenetic trees with $n$ labeled leaves; see \cite{HerrmannJoswig:2008} and \cite{SpeyerSturmfels:2004}.

A product of simplices appears as vertex figures of any vertex of a hypersimplex.
Moreover, a subdivision of a product of simplices extends to a subdivision of a hypersimplex via the tropical Stiefel map.
This lift has been studied in \cite{HerrmannJoswigSpeyer:2014}, \cite{Rincon:2013} and \cite{FinkRincon:2015}.

This paper comprises three main results, that combine polyhedral and matroid theory as well as tropical geometry.
In Section~\ref{sec:splits_hypersimplex} we show that any multi-split of a hypersimplex is the image of a multi-split of a product of simplices under the tropical Stiefel map (Theorem~\ref{thm:main1}).
To reach this goal we introduce the concept of \enquote{negligible} points in a point configuration.
With this tool we are able to show that the point configuration consisting of the vertices of a product of simplices suffice to describe a given multi-split of the hypersimplex.
This already implies that all multi-splits of a hypersimplex are subdivisions into matroid polytopes.

In Section~\ref{sec:nested} we define a relation depending on matroid properties of the occurring cells.
We use this relation to enumerate all multi-splits of hypersimplices (Proposition~\ref{prop:enum_ksplits}) and show that all maximal cells in a multi-split of a hypersimplex correspond to matroid polytopes of nested matroids (Theorem~\ref{thm:main2}). This generalizes the last statement of \cite[Proposition 30]{JoswigSchroeter:2017}, which treats $2$-splits, i.e. multi-splits with exactly two maximal cells.
As a consequence of the enumeration of all multi-splits of a hypersimplex we get the enumeration of all multi-splits of a product of simplices (Theorem~\ref{thm:simplices}).
Nested matroids are a well studied class in matroid theory.
Hampe recently introduced the \enquote{intersection ring of matroids} in \cite{Hampe:2017} and showed that every matroid is a linear combination of nested matroids in this ring.
Moreover, matroid polytopes of nested matroids describe the intersection of linear hyperplanes in a matroid subdivision locally. Hence they occur frequently in those subdivisions; see \cite{JoswigSchroeter:2017}.

In the last Section~\ref{sec:computations} we take a closer look at coarsest matroid subdivisions of the hypersimplex in general. 
Matroid subdivisions are important in tropical geometry  as they are dual to tropical linear spaces.
If they are regular they are also called \enquote{valuated matroids} introduced by Dress and Wenzel \cite{DressWenzel:1992}.
Coarsest matroid subdivisions have been studied in \cite{HerrmannJoswigSpeyer:2014}.
We compare two constructions of matroid subdivisions, those that are in the image of the tropical Stiefel map and those that appear as a corank vector of a matroid.
We present our computational results on the number of coarsest matroid subdivisions of the hypersimplex $\Delta(d,n)$ for small parameters $d$ and $n$ (Proposition~\ref{prop:comp}), which illustrate how fast the number of combinatorial types of matroid subdivisions grows.

\section{Multi-splits of the hypersimplex}\label{sec:splits_hypersimplex}
\noindent
In this section we will study a natural class of coarsest subdivisions, called \enquote{multi-splits}.
Our goal is to show that any \enquote{multi-split} of the hypersimplex can be derived from a \enquote{multi-split} of a product of simplices.
We assume that the reader has a basic background on subdivisions and secondary fans.
The basics could be found in \cite{LoeraRambauSantos:2010}.
We will shortly introduce our notation and definitions.

We consider a finite set of points in $\RR^n$ as a \emph{point configuration} $\cP$, i.e., each point occurs once in $\cP$.
A subdivision $\Sigma$ of $\cP$ is a collection of subsets of $\cP$, such that they satisfy the Closure, Union and Intersection Property. We call the convex hull of such a subset a \emph{cell}.
The \emph{lower convex hull} of a polytope $Q\subset\RR^{n+1}$ is the collection of all faces with an inner facet normal with a strictly positive $(n+1)$-coordinate.
A subdivision is \emph{regular}  when it is combinatorially isomorphic to the lower convex hull of a polytope $Q\subset\RR^{n+1}$, this polytope is called the \emph{lifted polytope}.
The $(n+1)$-coordinate is called the \emph{height}.
The heights of the points in $\cP$ form the \emph{lifting vector}.
The set of all lifting vectors whose projection of the lower convex hull coincides form an open cone.
The closure of such a cone is called a \emph{secondary cone}. 
The collection of all secondary cones is the \emph{secondary fan} of the point configuration $\cP$. 
We call a point $q\in\cP$ \emph{negligible in the subdivision $\Sigma$} if there is a cell containing the point $q$ and $q$ does not occur as a vertex of any $2$-dimensional cell.
In particular, a negligible point $q$ lies in a cell $C$ if and only if $q \in \conv(C\setminus \{q\})$.
For a regular subdivision this means that $q$ is lifted to a redundant point and to the lower convex hull of the lifted polytope.

\begin{example}\label{ex:fivepoints}
Consider the point configuration of the following five points $(0,0),(3,0),(0,3),$ $(3,3),(1,1)$. 
All nine possible subdivisions of that point configuration are regular and $(1,1)$ is negligible in three of them.
   See Figure~\ref{fig:fivepoints}.
\end{example}

\begin{figure}[t]
   \begin{tikzpicture}[scale = 0.56,
                    color = {black}]

   \draw[white] (-2,-1) -- (2,1);

   \tikzstyle{linestyle} = [line cap=round, line join=round];

  \coordinate (a0) at (4, 0);
  \coordinate (a1) at (7, 0);
  \coordinate (a2) at (4, 3);
  \coordinate (a3) at (7, 3);
  \coordinate (a4) at (5, 1);
  \draw[draw=blue, line width=1 pt, linestyle] (a0) -- (a4);
  \draw[draw=blue, line width=1 pt, linestyle] (a1) -- (a4) -- (a2);
  \draw[draw=black, line width=1.5 pt, linestyle] (a0) -- (a1) -- (a3) -- (a2) -- cycle;
  \fill[fill=red] (a0) circle (2.5 pt);
  \fill[fill=red] (a1) circle (2.5 pt);
  \fill[fill=red] (a2) circle (2.5 pt);
  \fill[fill=red] (a3) circle (2.5 pt);
  \draw[red, line width=1 pt, fill=red] (a4) circle (2.5 pt);

  \coordinate (b0) at (0, 0);
  \coordinate (b1) at (3, 0);
  \coordinate (b2) at (0, 3);
  \coordinate (b3) at (3, 3);
  \coordinate (b4) at (1, 1);
  \draw[draw=blue, line width=1 pt, linestyle] (b0) -- (b3);
  \draw[draw=blue, line width=1 pt, linestyle] (b1) -- (b4) -- (b2);
  \draw[draw=black, line width=1.5 pt, linestyle] (b0) -- (b1) -- (b3) -- (b2) -- cycle;
  \fill[fill=red] (b0) circle (2.5 pt);
  \fill[fill=red] (b1) circle (2.5 pt);
  \fill[fill=red] (b2) circle (2.5 pt);
  \fill[fill=red] (b3) circle (2.5 pt);
  \draw[red, line width=1 pt, fill=red] (b4) circle (2.5 pt);

  \coordinate (c0) at (8, 0);
  \coordinate (c1) at (11, 0);
  \coordinate (c2) at ( 8, 3);
  \coordinate (c3) at (11, 3);
  \coordinate (c4) at (9,1);
  \draw[draw=blue, line width=1 pt, linestyle] (c0) -- (c4);
  \draw[draw=blue, line width=1 pt, linestyle] (c1) -- (c4) -- (c2) -- cycle;
  \draw[draw=black, line width=1.5 pt, linestyle] (c0) -- (c1) -- (c3) -- (c2) -- cycle;
  \fill[fill=red] (c0) circle (2.5 pt);
  \fill[fill=red] (c1) circle (2.5 pt);
  \fill[fill=red] (c2) circle (2.5 pt);
  \fill[fill=red] (c3) circle (2.5 pt);
  \draw[red, line width=1 pt, fill=red] (c4) circle (2.5 pt);

  \coordinate (d0) at ( 4, 4);
  \coordinate (d1) at ( 7, 4);
  \coordinate (d2) at ( 4, 7);
  \coordinate (d3) at ( 7, 7);
  \coordinate (d4) at ( 5, 5);
  \draw[draw=black, line width=1.5 pt, linestyle] (d0) -- (d1) -- (d3) -- (d2) -- cycle;
  \fill[fill=red] (d0) circle (2.5 pt);
  \fill[fill=red] (d1) circle (2.5 pt);
  \fill[fill=red] (d2) circle (2.5 pt);
  \fill[fill=red] (d3) circle (2.5 pt);
  \draw[red, line width=1 pt, fill=red] (d4) circle (2.5 pt);

  \coordinate (e0) at ( 0, 4);
  \coordinate (e1) at ( 3, 4);
  \coordinate (e2) at ( 0, 7);
  \coordinate (e3) at ( 3, 7);
  \coordinate (e4) at ( 1, 5);
  \draw[draw=blue, line width=1 pt, linestyle] (e0) -- (e3);
  \draw[draw=black, line width=1.5 pt, linestyle] (e0) -- (e1) -- (e3) -- (e2) -- cycle;
  \fill[fill=red] (e0) circle (2.5 pt);
  \fill[fill=red] (e1) circle (2.5 pt);
  \fill[fill=red] (e2) circle (2.5 pt);
  \fill[fill=red] (e3) circle (2.5 pt);
  \draw[red, line width=1 pt, fill=red] (e4) circle (2.5 pt);

  \coordinate (f0) at ( 8, 4);
  \coordinate (f1) at (11, 4);
  \coordinate (f2) at ( 8, 7);
  \coordinate (f3) at (11, 7);
  \coordinate (f4) at ( 9, 5);
  \draw[draw=blue, line width=1 pt, linestyle] (f1) -- (f2);
  \draw[draw=black, line width=1.5 pt, linestyle] (f0) -- (f1) -- (f3) -- (f2) -- cycle;
  \fill[fill=red] (f0) circle (2.5 pt);
  \fill[fill=red] (f1) circle (2.5 pt);
  \fill[fill=red] (f2) circle (2.5 pt);
  \fill[fill=red] (f3) circle (2.5 pt);
  \draw[red, line width=1 pt, fill=red] (f4) circle (2.5 pt);

  \coordinate (g0) at ( 4, 8);
  \coordinate (g1) at ( 7, 8);
  \coordinate (g2) at ( 4,11);
  \coordinate (g3) at ( 7,11);
  \coordinate (g4) at ( 5, 9);
  \draw[draw=black, line width=1.5 pt, linestyle] (g0) -- (g1) -- (g3) -- (g2) -- cycle;
  \fill[fill=red] (g0) circle (2.5 pt);
  \fill[fill=red] (g1) circle (2.5 pt);
  \fill[fill=red] (g2) circle (2.5 pt);
  \fill[fill=red] (g3) circle (2.5 pt);
  \draw[red, line width=1 pt, fill=white, draw opacity=.4] (g4) circle (2.5 pt);

  \coordinate (h0) at ( 0, 8);
  \coordinate (h1) at ( 3, 8);
  \coordinate (h2) at ( 0,11);
  \coordinate (h3) at ( 3,11);
  \coordinate (h4) at ( 1, 9);
  \draw[draw=blue, line width=1 pt, linestyle] (h0) -- (h3);
  \draw[draw=black, line width=1.5 pt, linestyle] (h0) -- (h1) -- (h3) -- (h2) -- cycle;
  \fill[fill=red] (h0) circle (2.5 pt);
  \fill[fill=red] (h1) circle (2.5 pt);
  \fill[fill=red] (h2) circle (2.5 pt);
  \fill[fill=red] (h3) circle (2.5 pt);
  \draw[red, line width=1 pt, fill=white, fill opacity=0, draw opacity=.4] (h4) circle (2.5 pt);

  \coordinate (i0) at ( 8, 8);
  \coordinate (i1) at (11, 8);
  \coordinate (i2) at ( 8,11);
  \coordinate (i3) at (11,11);
  \coordinate (i4) at ( 9, 9);
  \draw[draw=blue, line width=1 pt, linestyle] (i1) -- (i2);
  \draw[draw=black, line width=1.5 pt, linestyle] (i0) -- (i1) -- (i3) -- (i2) -- cycle;
  \fill[fill=red] (i0) circle (2.5 pt);
  \fill[fill=red] (i1) circle (2.5 pt);
  \fill[fill=red] (i2) circle (2.5 pt);
  \fill[fill=red] (i3) circle (2.5 pt);
  \draw[red, line width=1 pt, fill=white, draw opacity=.4] (i4) circle (2.5 pt);



\end{tikzpicture}
   \caption{Nine (regular) subdivisions of the five points of Example~\ref{ex:fivepoints}. The inner point is negligible in all subdivisions in the middle row. This point is lifted above the lower convex hull in the regular subdivisions in the top row.
   The subdivision in the middle of the top row is a $1$-split, the left and right in the second row are $2$-splits and in the middle of the bottom row is a $3$-split.}
  \label{fig:fivepoints}
\end{figure}
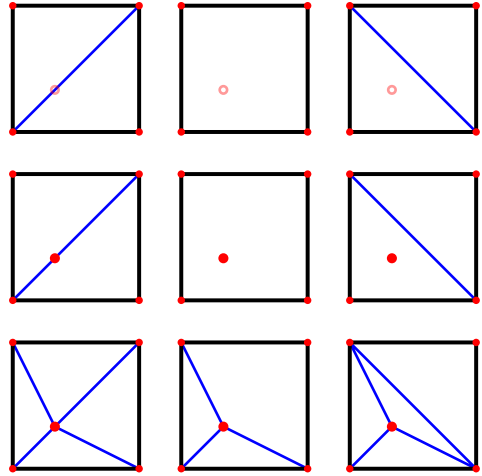

A negligible point $q\in\cP$ can be omitted in the subdivision $\Sigma$. More precisely we have the following relation between the subdivisions of $\cP$ and 
those subdivisions of $\cP\setminus\{q\}$.

\begin{proposition}\label{prop:negible} Let $q\in\cP$, sucht that $q\in\conv(C\setminus \{q\})$. Consider the following map on the set of all subdivisions of $\cP$ where $q$ is negligible.
\[
 \Sigma \mapsto \SetOf{C\setminus\{q\}}{C\in\Sigma} 
\]
 This map is a bijection onto all subdivisions of $\cP\setminus\{q\}$.
\end{proposition}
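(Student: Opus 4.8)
The plan is to write down the inverse map explicitly. To a subdivision $\Gamma$ of $\cP\setminus\{q\}$ I associate
\[
  \Psi(\Gamma)\;:=\;\SetOf{C}{C\in\Gamma,\ q\notin\conv(C)}\ \cup\ \SetOf{C\cup\{q\}}{C\in\Gamma,\ q\in\conv(C)},
\]
that is, I glue $q$ back into exactly those cells whose convex hull already contains it, and I write $\Phi(\Sigma):=\SetOf{C\setminus\{q\}}{C\in\Sigma}$ for the map of the statement. First I would record the elementary geometric fact on which everything rests: if $q\in\conv(S\setminus\{q\})$ for a finite set $S$, then $q$ is not a vertex of $\conv(S)$, hence $\conv(S\setminus\{q\})=\conv(S)$; and since the vertices of a face of a polytope are vertices of the polytope, $q$ is then not a vertex of any face of $\conv(S)$ containing it. Together with the description of negligible points recalled before the statement --- for negligible $q$ in $\Sigma$ one has $q\in C\Leftrightarrow q\in\conv(C\setminus\{q\})$, and therefore also $q\in C\Leftrightarrow q\in\conv(C)$ --- this shows at once that neither $\Phi$ nor $\Psi$ changes the convex hull of any cell, and, using the hypothesis that $q$ is not a vertex of $\conv(\cP)$ (equivalently $\conv(\cP)=\conv(\cP\setminus\{q\})$), that $\Phi$ does not change the hull of the ground configuration.

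Next I would check that $\Phi$ takes values among the subdivisions of $\cP\setminus\{q\}$ and $\Psi$ among the subdivisions of $\cP$ in which $q$ is negligible, by transporting the Union, Intersection and Closure Properties along these hull-preserving maps. The Union Property is immediate from the previous paragraph. For the Intersection Property under $\Phi$: for cells $C_1,C_2\in\Sigma$ one has $(C_1\setminus\{q\})\cap(C_2\setminus\{q\})=(C_1\cap C_2)\setminus\{q\}$, and the common face $\conv(C_1)\cap\conv(C_2)=\conv(C_1\cap C_2)$ is unchanged; if $q\in C_1\cap C_2$ then $q$ lies in the face $\conv(C_1)\cap\conv(C_2)$ of $\conv(C_1)$ without being a vertex of $\conv(C_1)$, hence without being a vertex of that face, so $q\in\conv\big((C_1\cap C_2)\setminus\{q\}\big)$ and the identity $\conv\big((C_1\setminus\{q\})\cap(C_2\setminus\{q\})\big)=\conv(C_1\setminus\{q\})\cap\conv(C_2\setminus\{q\})$ holds verbatim, while if $q\notin C_1\cap C_2$ there is nothing to check. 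For the Intersection Property under $\Psi$ one uses instead that $q\in\conv(C_1)$ together with $q\in\conv(C_2)$ forces $q\in\conv(C_1)\cap\conv(C_2)=\conv(C_1\cap C_2)$, so adjoining $q$ is consistent on overlaps. The Closure Property transports because the faces of $\conv(C\setminus\{q\})=\conv(C)$ are literally those of $\conv(C)$ and passing to a face-subset commutes with deleting or adjoining $q$. Finally $q$ is negligible in $\Psi(\Gamma)$: it lies in a cell there --- by the hypothesis $q\in\conv(\cP\setminus\{q\})$ is covered by some $C\in\Gamma$, and then $C\cup\{q\}\in\Psi(\Gamma)$ contains $q$ --- and it occurs only in cells $C\cup\{q\}$ with $q\in\conv(C)=\conv\big((C\cup\{q\})\setminus\{q\}\big)$, so it is a vertex of no cell it belongs to.

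It remains to see that $\Phi$ and $\Psi$ are mutually inverse, which is bookkeeping once the description of negligibility is available. In $\Psi(\Phi(\Sigma))$ a cell $C\in\Sigma$ is sent to $C\setminus\{q\}$, and since $q\in\conv(C\setminus\{q\})$ holds exactly when $q\in C$, the map $\Psi$ returns $(C\setminus\{q\})\cup\{q\}=C$ if $q\in C$ and $C\setminus\{q\}=C$ if $q\notin C$, so $C$ is recovered; here one also needs that $\Phi$ conflates no two cells, which holds because $C_1\setminus\{q\}=C_2\setminus\{q\}$ with $q\in C_1$, $q\notin C_2$ would give $q\in\conv(C_1\setminus\{q\})=\conv(C_2)$ by negligibility although $q\notin\conv(C_2)$. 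In $\Phi(\Psi(\Gamma))$ deleting $q$ from a cell $C$ or from $C\cup\{q\}$ returns $C$ in either case, so $\Gamma$ is recovered. Hence $\Phi$ is a bijection. I expect the one step needing real care --- rather than routine unwinding of definitions --- to be the transport of the Intersection Property through $\Phi$: one has to be sure that after removing the negligible point the pairwise intersections of cells still have the correct convex hull, and this is precisely where the small polytopal fact \enquote{a non-vertex of a polytope is a non-vertex of every face through it} is used. Everything else is either the description of negligibility recalled above or a formal consequence of the three subdivision axioms.
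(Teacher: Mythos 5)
The paper states Proposition~\ref{prop:negible} without any proof, so there is no argument of the author's to compare yours against; judged on its own, your proof is correct and is the natural argument one would supply here. The explicit inverse $\Psi$ (re-adjoin $q$ to exactly those cells whose convex hull contains it), the key elementary fact that $q\in\conv(S\setminus\{q\})$ forces $\conv(S)=\conv(S\setminus\{q\})$ and that a non-vertex of a polytope is a non-vertex of every face through it, and the transport of the Closure, Union and Intersection properties along these hull-preserving operations together give a complete and correct proof, including the verification that $q$ is indeed negligible in $\Psi(\Gamma)$ and that $\Phi$ does not conflate cells. The only caveat is cosmetic: the hypothesis in the statement reads \enquote{$q\in\conv(C\setminus\{q\})$} with $C$ unbound, and you have (reasonably) interpreted it as $q\in\conv(\cP\setminus\{q\})$, which is the hypothesis your argument actually uses and the one the surrounding text clearly intends.
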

 
A \emph{$k$-split} of a point configuration $\cP$ is a coarsest subdivision $\Sigma$ of the convex hull $P$ of $\cP$ with $k$ maximal faces and a common $k-1$-codimensional face.
We call this face the \emph{common cell} and denote this polytope by $H^\Sigma$. 
We shorten the notation if the point configuration $\cP$ is the vertex set of a polytope $P$ and write this as $k$-split of $P$.
If we do not specify the number of maximal cells we will call such a coarsest subdivision a \emph{multi-split}. 
\begin{example} The point configuration of the points in Example~\ref{ex:fivepoints} has four coarsest subdivisions. These are a $1$-split, two $2$-splits and a $3$-split. See Figure~\ref{fig:fivepoints}.
\end{example}

\begin{example} In general not all coarsest subdivisions are multi-splits.
   An extremal example is a $4$-dimensional cross polytope with perturbed vertices, such that four points do not lie in a common hyperplane.
   The secondary polytope of this polytope has $29$ facets, non of which is a multi-split.
\end{example}

\begin{example} Another example for a coarsest subdivision that is not a multi-split is illustrated in Figure~\ref{fig:nonksplit}.
\end{example}

Splits have been studied by several people in phylogenetic analysis, metric spaces and polyhedral geometry. For example by Bandelt and Dress \cite{BandeltDress:1992}, Hirai \cite{Hirai:2006}, Herrmann and Joswig \cite{HerrmannJoswig:2008} and by Koichi \cite{Koichi:2014}. The more general multi-splits have been introduced by Herrmann in \cite{Herrmann:2011} under the term $k$-split.
The main result there is the following.
\begin{proposition}[{\cite[Theorem 4.9]{Herrmann:2011}}]\label{prop:ksplitHer}
   Each $k$-split is a regular subdivision. The dual complex of the lower cells, i.e., the subcomplex in the polar of the lifted polytope, is a $k$-simplex modulo its lineality space.
\end{proposition}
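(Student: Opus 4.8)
The plan is to derive both assertions from an explicit description of the combinatorial type of a $k$-split and then to exhibit a lifting vector by hand. Fix a $k$-split $\Sigma$ of $P$ with maximal cells $C_1,\dots,C_k$ and common cell $H=H^\Sigma$ of codimension $k-1$, so that $H\subseteq C_i$ for every $i$ and $\aff H=\{x:\langle a_1,x\rangle=b_1,\dots,\langle a_{k-1},x\rangle=b_{k-1}\}$ for linearly independent functionals $a_1,\dots,a_{k-1}$. First I would analyse the local behaviour of $\Sigma$ along $\relint H$: the tangent cones of the cells of $\Sigma$ at a point of $\relint H$ form a polyhedral fan, and passing to the quotient $\RR^n/\operatorname{lin}(\aff H)\cong\RR^{k-1}$ this becomes a complete fan (inside the tangent cone of $P$) with exactly $k$ maximal cones, one for each $C_i$. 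The key point is a rigidity statement: a complete fan in $\RR^{k-1}$ with only $k$ maximal cones must be the normal fan of a $(k-1)$-simplex (equivalently, a polytopal $(k-2)$-sphere with $k$ facets is the boundary of a simplex). Granting this, any two maximal cells meet in codimension exactly $1$, and iterating shows that the cells of $\Sigma$ are precisely the intersections $C_S:=\bigcap_{i\in S}C_i$ for $\varnothing\neq S\subseteq[k]$, with $\dim C_S=\dim P-\lvert S\rvert+1$ and $C_S\supseteq C_T$ iff $S\subseteq T$; in particular the face poset of $\Sigma$ is that of the boundary complex of a $(k-1)$-simplex, with $H=C_{[k]}$ playing the role of the empty face.

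Next I would prove regularity using the same quotient. Let $\pi\colon\RR^n\to\RR^{k-1}$, $\pi(x)=(\langle a_1,x\rangle,\dots,\langle a_{k-1},x\rangle)$, so $\pi(H)$ is the single point $\bar b=(b_1,\dots,b_{k-1})$ and the images $\pi(C_i)$ subdivide the $(k-1)$-dimensional polytope $\pi(P)$, all sharing the vertex $\bar b$ and realising, via their tangent cones at $\bar b$, the simplex normal fan from the previous step. That fan is therefore the normal fan of a genuine $(k-1)$-simplex $\conv\{c_1,\dots,c_k\}$ for suitable $c_1,\dots,c_k\in\RR^{k-1}$, with the normal cone at $c_i$ spanned by $\pi(C_i)-\bar b$. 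I would then put $\ell_i(x)=\langle c_i,\pi(x)-\bar b\rangle$ and check that the convex piecewise-linear function $g(x)=\max_i\ell_i(x)$ has maximal domains of linearity exactly $C_1,\dots,C_k$; since these cover $P$, the values $g(v)$ over the vertices $v$ of $P$ form a lifting vector inducing $\Sigma$, so $\Sigma$ is regular. One subtlety to verify carefully is that the projected cells $\pi(C_i)$ really have pairwise disjoint interiors and that $\pi(C_i)-\bar b$ spans the normal cone at $c_i$ alone; this is where one uses $C_i\cap\aff H=H$, i.e. that the maximal cells leave $H$ transversally, so that $\pi$ does not fold two cells onto overlapping regions. (The non-full-dimensional case is handled by first replacing $\RR^n$ by $\aff P$.)

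Finally I would read off the dual complex. With a lifting vector in hand, let $Q\subset\RR^{n+1}$ be the lifted polytope; the projection of its lower hull onto $P$ identifies the lower faces of $Q$ with the cells of $\Sigma$ in an order-preserving way, so the poset of lower faces of $Q$ is the face poset of the boundary of the $(k-1)$-simplex found above. Passing to the polar of $Q$ reverses this poset, turning it into the face poset of the full simplex on $k$ vertices, those vertices being dual to the maximal cells $C_1,\dots,C_k$ and the unique maximal face being dual to the common cell $H$ — this is the asserted simplex; the lineality space one quotients out by is the lineality of the polar, which merely reflects the codimension of the point configuration in the ambient space.

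The step I expect to be the real obstacle is the rigidity statement in the first paragraph, namely that a complete fan in $\RR^{k-1}$ with exactly $k$ maximal cones must be a simplex normal fan, since everything afterwards — the dimension bookkeeping, the explicit lift, and the dualisation — is essentially mechanical once the local structure along $H$ is pinned down. The only other delicate point is the transversality needed to keep the projected maximal cells from overlapping.
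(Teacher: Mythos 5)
First, a point of reference: the paper does not prove this proposition at all --- it is imported verbatim from Herrmann \cite[Theorem 4.9]{Herrmann:2011}, and the text only recalls the associated construction (the complete fan $\cF$ with lineality $\aff H^\Sigma$ and the piecewise-linear lift that vanishes on $\aff H^\Sigma$ and is linear on each cone). Your regularity argument via $g(x)=\max_i\langle c_i,\pi(x)-\bar b\rangle$ is essentially that same construction, so on that part you are aligned with the intended proof.

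The genuine gap is the step you yourself flag as the obstacle, and it is worse than a missing lemma: as literally stated, the rigidity claim is \emph{false}. A complete fan in $\RR^{k-1}$ with exactly $k$ maximal cones need not be the normal fan of a simplex; e.g.\ pulling back the normal fan of a quadrilateral along a projection $\RR^{3}\to\RR^{2}$ gives a complete fan in $\RR^{3}$ with four maximal cones that is not a simplex fan. What saves the situation is a hypothesis you never invoke: since $\bigcap_i C_i=H^\Sigma$ has codimension exactly $k-1$, the intersection of all $k$ maximal cones in the quotient $\RR^n/\operatorname{lin}(\aff H^\Sigma)\cong\RR^{k-1}$ is the origin, so the fan is pointed; only with that pointedness does the counting argument (via the induced convex cell decomposition of the $(k-2)$-sphere and Euler's relation, say) force the fan to be the face fan of a simplex, and that argument still has to be written. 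A second unjustified input is the completeness of the local fan, i.e.\ that $\relint H^\Sigma$ lies in $\relint P$: in the paper this is Lemma~\ref{lem:propertiesH}, which is \emph{deduced from} Proposition~\ref{prop:ksplitHer}, so you cannot use it here; ruling out $H^\Sigma\subseteq\partial P$ requires its own argument (using that $\Sigma$ is coarsest), since $k$ full-dimensional cells meeting only in a codimension-$(k-1)$ face sitting on the boundary is not excluded by dimension counting alone. Until the pointed-fan rigidity statement and the completeness of the fan are actually proved, the proposal is an outline of Herrmann's theorem rather than a proof of it.
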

Proposition~\ref{prop:ksplitHer} implies that the subdivision of a multi-split corresponds to a ray in the secondary fan, i.e., this is a coarsest regular and non-trivial subdivision. 
Furthermore, the number of faces of a fixed dimension of a $k$-split is the same as the number of faces of the same dimension of a $k$-simplex. In particular, the number of maximal, non-trivial inclusionwise minimal cells equals $k$.
Note that also the number of $(n-k+2)$-dimensional cells is equal to $k$.

We recall the main construction of Proposition~\ref{prop:ksplitHer}, which proves the regularity.
The subdivision $\Sigma$ is induced by a complete fan $\cF$ with $k$ maximal cones, a lineality space $\aff H^\Sigma$ and an apex at $a\in\RR^n$.
Here \enquote{induced} means a cell of $\Sigma$ is the intersection of a cone of $\cF$ with $P$.
The apex $a$ is not unique, it can be any point in $H^\Sigma$. Later we will take specific choices for it.
A lifting function that induces the multi-split is given by the following.
All points in $\cP \cap \aff H^\Sigma$ are lifted to height zero. 
The height of a point $p\in\cP$ that is contained in a ray of $\cF$ is the shortest distance to the affine space $\aff H^\Sigma$.
Each other point in the point configuration $\cP$ is a non-negative linear combination of those rays.
The height of a point is given by the linear combination with the same coefficients multiplied with the heights of points in the rays of $\cF$.

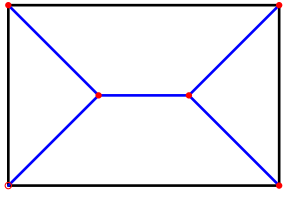
\begin{figure}[t] 
   \begin{tikzpicture}[scale = 1.2,
                    color = {black}]


  \coordinate (v0) at (0, 0);
  \coordinate (v1) at (3, 0);
  \coordinate (v2) at (3, 2);
  \coordinate (v3) at (0, 2);
  \coordinate (v4) at (1, 1);
  \coordinate (v5) at (2, 1);

 \tikzstyle{linestyle} = [line cap=round, line join=round, line width=1 pt];

  \draw[linestyle] (v0) -- (v1) -- (v2) -- (v3) -- cycle;
  \draw[blue, linestyle] (v0) -- (v4) -- (v3);
  \draw[blue, linestyle] (v1) -- (v5) -- (v2);
  \draw[blue, linestyle] (v4) -- (v5);

   \draw[red] (v0) circle (1 pt);
   \fill[fill=red] (v1) circle (1 pt);
   \fill[fill=red] (v2) circle (1 pt);
   \fill[fill=red] (v3) circle (1 pt);
   \fill[fill=red] (v4) circle (1 pt);
   \fill[fill=red] (v5) circle (1 pt);

\end{tikzpicture}
   \caption{A coarsest regular subdivision, which is not a multi-split.}
  \label{fig:nonksplit}
\end{figure}

The following Lemma summarizes important properties of the common cell $H^\Sigma$.
\begin{lemma}\label{lem:propertiesH}
   The common cell $H^\Sigma$ is the intersection of the affine space $\aff H^\Sigma$ with $P$ and $\aff H^\Sigma$ intersects $P$ in its relative interior. Hence, the relative interior of the common cell $H^\Sigma$ is contained in the relative interior of the polytope $P$.
\end{lemma}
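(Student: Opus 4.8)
The plan is to prove the three assertions in turn; the middle one is the substantial part. Throughout we may assume $P$ is full-dimensional (otherwise work inside $\aff P$), we write $L_0$ for the linear space parallel to $\aff H^\Sigma$, and we use the complete fan $\cF$ from Proposition~\ref{prop:ksplitHer} that induces $\Sigma$, with maximal cones $\sigma_1,\dots,\sigma_k$, lineality space $\aff H^\Sigma$, and apex chosen in $H^\Sigma$. Recall also that, modulo its lineality space, $\cF$ is the normal fan of a $(k-1)$-simplex.

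\emph{Step 1: $H^\Sigma=\aff H^\Sigma\cap P$.} Every cell of $\Sigma$ is of the form $\sigma\cap P$ for a cone $\sigma\in\cF$; the maximal cells are $C_i=\sigma_i\cap P$, and the common cell $H^\Sigma$ is their intersection (it is the unique minimal cell of the multi-split). Since the $k$ maximal cones of the normal fan of a $(k-1)$-simplex meet only at the origin, $\bigcap_i\sigma_i$ equals the lineality space $\aff H^\Sigma$, so $H^\Sigma=\bigl(\bigcap_i\sigma_i\bigr)\cap P=\aff H^\Sigma\cap P$.

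\emph{Step 2: $\aff H^\Sigma\cap\relint P\neq\emptyset$.} I would argue by contradiction. If the intersection is empty then, by Step 1, the convex cell $H^\Sigma$ lies in the relative boundary of $P$; taking the smallest face of $P$ containing a relative interior point of $H^\Sigma$ shows that $H^\Sigma$ lies in a proper face, hence in a facet $F$. Let $\langle c,x\rangle\le\beta$ be the facet inequality, tight on $F$ and strict on $\relint P$. Then $\aff H^\Sigma\subseteq\{x:\langle c,x\rangle=\beta\}$, so $c$ vanishes on $L_0$. Now exploit the $k$ rays of $\cF$: the $l$-th ray is the cone $\aff H^\Sigma+\RR_{\ge 0}\rho_l$ for some direction $\rho_l$, and by Proposition~\ref{prop:ksplitHer} it determines a cell of $\Sigma$ that properly contains $H^\Sigma$. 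Hence $P$ contains a point $w+t\rho_l$ with $w\in\aff H^\Sigma$ and $t>0$; since $\langle c,w\rangle=\beta$ and $\langle c,w+t\rho_l\rangle\le\beta$, we get $\langle c,\rho_l\rangle\le 0$, and this holds for every $l=1,\dots,k$. On the other hand, the ray directions $\rho_l$ are the inner facet normals of a $(k-1)$-simplex, so they admit a positive dependence $\sum_l\lambda_l\rho_l\in L_0$ with all $\lambda_l>0$. Pairing with $c$ gives $\sum_l\lambda_l\langle c,\rho_l\rangle=0$, which forces $\langle c,\rho_l\rangle=0$ for all $l$. But the $\rho_l$ together with $L_0$ span the ambient space (completeness of $\cF$), so $c=0$, contradicting that $c$ is a facet normal. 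This is the main obstacle: the common cell cannot hide in the boundary, because each of the $k$ rays of the inducing fan must push into $P$, forcing all their directions weakly onto one side of any facet containing $H^\Sigma$ — incompatible with the positive dependence among the facet normals of a simplex.

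\emph{Step 3: $\relint H^\Sigma\subseteq\relint P$.} Fix $x_0\in\aff H^\Sigma\cap\relint P$; by Step 1, $x_0\in H^\Sigma$. For $y\in\relint H^\Sigma$ with $y\neq x_0$, prolong the segment from $x_0$ through $y$ slightly within $H^\Sigma$ to obtain $y'\in H^\Sigma\subseteq P$ with $y$ in the relative interior of $[x_0,y']$; as $x_0\in\relint P$, convexity gives $y\in\relint P$ (the case $y=x_0$ is trivial). Hence $\relint H^\Sigma\subseteq\relint P$, completing the lemma.
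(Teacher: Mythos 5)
Your proof is correct, and Steps~1 and~3 match the paper's (Step~1 is verbatim the paper's argument; Step~3 makes explicit a standard convexity deduction the paper leaves implicit). The interesting difference is in Step~2, the claim that $H^\Sigma$ cannot sit in a proper face of $P$. The paper disposes of this in two lines by duality: by Proposition~\ref{prop:ksplitHer} the dual cell of $H^\Sigma$ in the polar of the lifted polytope is a $k$-simplex, hence bounded, whereas cells lying in the boundary of $P$ are dual to unbounded polyhedra. You instead argue directly in the primal: if $H^\Sigma$ lay in a facet with normal $c$, then $c$ would vanish on the lineality space and be nonpositive on each of the $k$ ray directions $\rho_l$ (since each ray carries a cell of $\Sigma$ properly containing $H^\Sigma$, hence reaching into $P$), and the positive dependence $\sum_l\lambda_l\rho_l\in L_0$ with $\lambda_l>0$ coming from the normal fan of a $(k-1)$-simplex then forces $c=0$. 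Both arguments lean on the same input, namely the simplex structure from Proposition~\ref{prop:ksplitHer}; the paper's version is shorter but requires the reader to accept the bounded-versus-unbounded duality for cells of a regular subdivision, while yours is longer but entirely elementary and self-contained, needing only the facet inequality and the positive circuit among the ray directions. One small point worth making explicit in your write-up: the fact that each ray of $\cF$ meets $P$ in a cell strictly larger than $H^\Sigma$ is exactly the paper's observation that a $k$-split has $k$ cells of dimension $n-k+2$, so you are using Proposition~\ref{prop:ksplitHer} there as well, not just completeness of the fan.
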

\begin{proof}
   Let us assume that $\cF$ is the complete fan of the $k$-split $\Sigma$.
   The intersection of all maximal cones in $\cF$ is an affine space which shows $H^\Sigma = \aff H^\Sigma \cap P$.
   The dual cell of $H^\Sigma$ is a $k$-simplex by Proposition~\ref{prop:ksplitHer}, and therefore a bounded polytope. 
Cells in the boundary of the polytope $P$ are dual to unbounded polyhedra.
Hence, this implies that $H^\Sigma$ is not contained in any proper face of $P$.
\end{proof}

Let $N(v)$ be the set of vertices that are neighbours of $v$ in the vertex-edge graph of $P$ and  $\varepsilon = \min_{u\in N(v)} \sum_{w\in N(v)} \langle w-v,\, u-v \rangle$.
The intersection of the polytope $P$ with a hyperplane that (weakly) separates the vertex $v$ from all other vertices and does not pass through $v$ is the \emph{vertex figure} of $v$
\[
   \fig(v) = \SetOf{x\in P }{ \sum_{w\in N(v)} \langle w-v,\, x-v \rangle = \varepsilon  }
\]
Our goal is to relate a $k$-split of a polytope to a $k$-split in a vertex figure.

We will focus on a particular class of convex polytopes, the hypersimplices.
We define for $d,n\in\ZZ$, $I\subseteq[n]$ and $0 \leq d \leq \size(I)$ the polytope
\[
   \Delta(d,I) \ = \ \SetOf{x\in[0,1]^n}{\sum_{i\in I} x_i = d \text{ and } \sum_{i\not\in I} x_i = 0 } \enspace .
\]
The \emph{$(d,n)$-hypersimplex} is the polytope $\Delta(d,[n])$, that we denote also by $\Delta(d,n)$.
Clearly, the polytope $\Delta(d,I)$ is a fixed embedding of the hypersimplex $\Delta(d,\size(I))$ into $n$-dimensional space.
We define the $(n-1)$-simplex $\Delta_{n-1}$ as the hypersimplex $\Delta(1,n)$ which is isomorphic to $\Delta(n-1,n)$.

\begin{example} \label{ex:vertex_fig}
   The vertex figure $\fig(e_I)$ of $e_I=\sum_{i\in I} e_i$ in the hypersimplex $\Delta(d,n)$ is
   \begin{align*}
      \fig(e_I) &= \SetOf{x\in\Delta(d,n)}{\sum_{i\in I} \sum_{j\in [n]-I} \langle e_j-e_i,\, x-e_I \rangle = n}\\
      &= \SetOf{x\in\Delta(d,n)}{\langle d e_{[n]-I}- (n-d) e_I,\, x-e_I \rangle = n}\\
      &= \Delta(d-1,I)\times\Delta(1,[n]-I)
   \end{align*}
\end{example}

If the point configuration $\cP$ is the vertex set of a polytope $P$, then there is at least one vertex that is contained in the common cell $H^\Sigma$.
Even more if $P$ is not $0$-dimensional then also $H^\Sigma$ is at least $1$-dimensional, otherwise it would be a face of $P$.

We say that a subdivision $\Sigma'$ on $\cP'\subsetneq\RR^n$ is \emph{ induced } by another subdivision $\Sigma$ on $\cP\subsetneq\RR^n$ if for all $\sigma\in\Sigma$ with $\dim(\conv\sigma\cap\conv \cP')=0$ we have $\conv\sigma\cap\conv(\cP')\subseteq \cP'$ and $\Sigma' = \smallSetOf{\conv\sigma\cap\cP'}{\sigma\in\Sigma}$.
Note that this is not the same concept as a subdivision that is \enquote{induced} by a fan.

\begin{figure}
   \begin{subfigure}[c]{0.49\textwidth}
   \begin{tikzpicture}[x  = {(0.9cm,-0.076cm)},
                    y  = {(-0.06cm,0.95cm)},
                    z  = {(-0.44cm,-0.29cm)},
                    scale = 1.85,
                    color = {black}]

  \draw[white] (-2.2,-1.35) -- (2,1.35);

  \coordinate (v0) at (0, 0, 0);
  \coordinate (v1) at ( 1, 0, 0);
  \coordinate (v2) at (-1, 0, 0);
  \coordinate (v3) at (0, 1, 0);
  \coordinate (v4) at (0,-1, 0);
  \coordinate (v5) at (0, 0, 1);
  \coordinate (v6) at (0, 0,-1);
 
 \tikzstyle{linestyle} = [preaction={draw=white, line cap=round, line width=1.5 pt}, line cap=round, line join=round];

  \draw[line width=1 pt, linestyle] (v3) -- (v6) -- (v4);
  \draw[draw=black, line width=1.5 pt, linestyle, preaction={draw=white, line cap=round, line width=1.8 pt}] (v1) -- (v4) -- (v2);
  \draw[draw=blue, dashed, line width=0.5 pt, linestyle] (v5) -- (v6);
  \draw[fill=blue, fill opacity=0.3, line width=1 pt, linestyle] (v1) -- (v5) -- (v2) -- (v6) -- cycle;
  \draw[draw=black, line width=1.5 pt, linestyle] (v1) -- (v3) -- (v2);
  \draw[line width=1 pt, linestyle] (v3) -- (v5) -- (v4);

   \draw[red, line width=0.6 pt, fill=blue, fill opacity=0] (v0) circle (1 pt);
   \fill[fill=red] (v1) circle (1 pt);
   \fill[fill=red] (v2) circle (1 pt);
   \fill[fill=red] (v3) circle (1 pt);
   \fill[fill=red] (v4) circle (1 pt);
   \fill[fill=black] (v5) circle (1 pt);
   \node at ($(v5)-.15*(v1)$) [text=black, inner sep=0.5pt, below left, draw=none, align=left] {$e_I$};
   \fill[fill=black] (v6) circle (1 pt);

\end{tikzpicture}
      \subcaption{A $2$-split of the octahedron.}
   \end{subfigure}
   \begin{subfigure}[c]{0.49\textwidth}
   \begin{tikzpicture}[scale = 1.8,
                    color = {black}]

   \draw[white] (-2,-1) -- (2,1);
   \fill[fill=green] (0,-1.3) circle (0 pt);

  \coordinate (v0) at (0, 0);
  \coordinate (v1) at ( 1, 0);
  \coordinate (v2) at (-1, 0);
  \coordinate (v3) at (0, 1);
  \coordinate (v4) at (0,-1);
 
 \tikzstyle{linestyle} = [line cap=round, line join=round];

  \draw[draw=blue, line width=1 pt, linestyle] (v1) -- (v2);
  \draw[draw=black, line width=1.5 pt, linestyle] (v1) -- (v3) -- (v2) -- (v4) -- cycle;

   \draw[red, line width=1 pt, fill=red] (v0) circle (1 pt);
   \node at ($0.15*(v4)$) [text=black, inner sep=0.5pt, right, draw=none, align=left] {$q$};
   \fill[fill=red] (v1) circle (1 pt);
   \fill[fill=red] (v2) circle (1 pt);
   \fill[fill=red] (v3) circle (1 pt);
   \fill[fill=red] (v4) circle (1 pt);

\end{tikzpicture}
      \subcaption{The induced $2$-split with the interior point $q$.}
   \end{subfigure}
  \caption{
     A $2$-split $\Sigma$ in the octahedron $\Delta(2,4)$, with the common cell \textcolor{blue}{$H^\Sigma$}
     and
     the induced $2$-split in the vertex figure $\fig(e_I)$.}
  \label{fig:octsplit}
\end{figure}
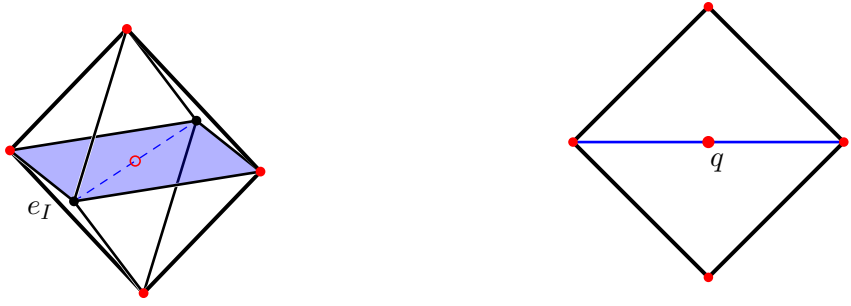

\begin{example}\label{ex:octsplit}
	A subdivision of the octahedron into two egyptian pyramids is a $2$-split.
   The common cell is a square.
   Figure~\ref{fig:octsplit} illustrates this subdivision as well as the induced subdivision of the vertex figure.
   The induced subdivision is a $2$-split of a square on a point configuration with five points, the four vertices and an interior point $q$.
   The point $q$ is the intersection of the vertex figure $\fig(e_I)$ and the convex hull of the two vertices that are not in the vertex figure. 
   The interior point $q$ is negligible.
\end{example}

The situation of Example~\ref{ex:octsplit} generalizes to $k$-splits of arbitrary polytopes.
\begin{proposition}\label{prop:induced_subdivision}
   Let $\Sigma$ be a $k$-split of the polytope $P$ and $v\in H^\Sigma$ be a vertex of $P$.
   Then each cone of $\cF$ intersects the vertex figure $\fig(v)$ of $v$.
   In particular, the subdivision $\Sigma$ induces a $k$-split on a point configuration that is contained in $\fig(v)$.
\end{proposition}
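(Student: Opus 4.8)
The plan is to establish the two claims in turn: first that every cone of $\cF$ meets the vertex figure $\fig(v)$, and then to obtain the asserted $k$-split by intersecting the whole fan $\cF$ with the affine hull of $\fig(v)$.

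For the first claim the key observation is that every cone of $\cF$ — maximal or not — contains the affine space $\aff H^\Sigma$. Indeed, $\cF$ has lineality space $\aff H^\Sigma$, so each maximal cone has the form $a + C_0$ with $C_0$ a polyhedral cone containing the linear space $\aff H^\Sigma - a$; it therefore contains $a + (\aff H^\Sigma - a) = \aff H^\Sigma$, and so does each of its faces. Writing $\fig(v) = P \cap H$ for the hyperplane $H$ defining the vertex figure, we get for every cone $C$ of $\cF$
\[
 C \cap \fig(v) \ = \ C \cap P \cap H \ \supseteq \ \aff H^\Sigma \cap P \cap H \ = \ H^\Sigma \cap H \, ,
\]
so it is enough to prove $H^\Sigma \cap H \neq \emptyset$. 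By Lemma~\ref{lem:propertiesH} the common cell $H^\Sigma$ is not a face of $P$; since $H^\Sigma$ is a cell of a subdivision of the vertex set of $P$ and is not a single point, it has a vertex $v' \neq v$ which is a vertex of $P$. By definition $H$ weakly separates $v$ from all other vertices of $P$ and does not pass through $v$, so $H = \smallSetOf{x}{\langle c, x\rangle = \alpha}$ with $\langle c, v\rangle < \alpha \le \langle c, v'\rangle$, and therefore the segment $\conv\{v, v'\} \subseteq H^\Sigma$ meets $H$. This proves the first claim.

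For the second claim, choose the apex $a$ of $\cF$ inside $H^\Sigma \cap H$, which is legitimate by the first part and by the freedom in the choice of apex. Then $\cF' := \smallSetOf{C \cap \aff H}{C \in \cF}$ is a complete fan in the hyperplane $\aff H$ with apex $a$: its $k$ maximal cones $C_i \cap \aff H$ are full-dimensional in $\aff H$ (each $C_i$ contains $\aff H^\Sigma$ and hence crosses $\aff H$ transversally) and are pairwise distinct, and its lineality space is $\aff H^\Sigma \cap \aff H$. Since $v \in \aff H^\Sigma$ but $v \notin H$, the hyperplane $\aff H$ does not contain $\aff H^\Sigma$, so this lineality has codimension $k - 1$ in $\fig(v)$. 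Re-running the construction behind Proposition~\ref{prop:ksplitHer} (building a lifting function from a complete fan with $k$ maximal cones around its lineality) for $\cF'$ then shows that the subdivision $\Sigma'$ of $\fig(v)$ induced by $\cF'$ is regular, coarsest, has $k$ maximal cells and has common cell $H^\Sigma \cap H$; that is, $\Sigma'$ is a $k$-split. Its point configuration $\cP_v$ — the vertices of the cells of $\Sigma'$ — is a finite subset of $\fig(v)$, and one verifies directly from the definition that $\Sigma$ induces $\Sigma'$: we have $\conv \cP_v = \fig(v)$, each cell of $\Sigma'$ equals $\conv\sigma \cap \cP_v$ for the corresponding $\sigma \in \Sigma$, and whenever $\conv\sigma$ meets $\fig(v)$ in a single point, that point is a $0$-dimensional cell of $\Sigma'$ and hence belongs to $\cP_v$.

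I expect the main obstacle to be the bookkeeping in the second part: verifying that $\Sigma'$ is genuinely coarsest — not merely a regular subdivision with $k$ maximal cells — that its common cell has the correct codimension inside the lower-dimensional polytope $\fig(v)$, and that it satisfies the formal notion of being induced by $\Sigma$. The first claim, in contrast, is short once one notices that the lineality of $\cF$ forces every cone to contain $\aff H^\Sigma$, which reduces it to the one-line fact $H^\Sigma \cap H \neq \emptyset$.
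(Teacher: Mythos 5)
Your argument is correct, and it follows the paper's overall strategy --- slice the fan $\cF$ with the hyperplane $\aff H$ defining $\fig(v)$ --- but the justification of the crossing claim is genuinely different. The paper takes $v$ itself as the apex and argues ray by ray: each ray of $\cF$ has the form $\smallSetOf{v+\lambda(w-v)}{\lambda\geq 0}+\aff H^\Sigma$ for a second vertex $w$ of $P$, so the segment from $v$ to $w$ crosses the vertex-figure hyperplane, and the dimension drop for all higher-dimensional cones is inferred from the rays. You instead use the lineality space: every cone of $\cF$ contains $\aff H^\Sigma$, which reduces the whole first claim to the single fact $H^\Sigma\cap \aff H\neq\emptyset$, proved via a second vertex $v'$ of the common cell (which exists because $H^\Sigma$ is at least one-dimensional, as the paper notes just before the proposition). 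Your route buys a bit more: it exhibits a point of $\fig(v)$ common to \emph{all} cones, which immediately identifies the common cell of the induced subdivision as $H^\Sigma\cap\aff H$ and gives its codimension $k-1$ in $\fig(v)$ --- details the paper compresses into ``we conclude that the induced subdivision is again a $k$-split.'' Two spots are glossed over at roughly the paper's own level of informality: ``crosses $\aff H$ transversally'' should be unpacked (since $\aff H^\Sigma$ meets $\aff H$ but is not contained in it, it has points strictly on both sides of $\aff H$; hence so does the relative interior of each cone, so intersecting with $\aff H$ drops every cone's dimension by exactly one and the $k$ maximal cells stay distinct), and the assertion that $\Sigma'$ is \emph{coarsest} deserves the one-line remark that merging any two of the $k$ maximal cones of a complete fan that is pointed modulo a codimension-$(k-1)$ lineality either destroys convexity or yields the trivial subdivision. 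Neither is a gap in substance.
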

\begin{proof}
   Let us assume without loss of generality that the vertex $v$ is the apex of $\cF$.
   Each ray of $\cF$ is a cone of the form $\smallSetOf{v+\lambda (w-v)}{\lambda \geq 0}+\aff H^\Sigma$ for another vertex of $w\in P$.
   Hence, each ray intersects the vertex figure $\fig(v)$ of $v\in H^\Sigma$.
   This implies that the intersection of a $\ell$-dimensional cone with $\fig(v)$ is $\ell-1$ dimensional.
   We conclude that the induced subdivision is again a $k$-split.
\end{proof}

Our main goal is to classify all multi-splits of the hypersimplices.
Recall from Example~\ref{ex:vertex_fig} that for the hypersimplex the vertex figure of $e_I=\sum_{i\in I} e_i$ with $\size I = d$ is the product of simplices
\[
   \fig(e_I)\ =\ \SetOf{x\in\Delta(d,n)}{\sum_{i\in I} x_i = d-1} \ = \ \Delta(d-1,I)\times\Delta(1,[n]-I)\ \simeq \ \Delta_{d-1}\times\Delta_{n-d-1} \enspace.
\]
The intersection of the vertex figure of $e_I$ and the line spanned by the two vertices $e_I$ and $e_J$ with $J\in\tbinom{[n]}{d}$ is a point $q$ with coordinates
\[
   q_i = \left\{\begin{array}{cr}
      1                                & \text{ if } i \in I\cap J\\
      \frac{\size(I-J)-1}{\size(I-J)}  & \text{ if } i \in I-J\\
      \frac{1}{\size(I-J)}             & \text{ if } i \in J-I\\
      0                                & \text{ if } i \not \in I\cup J
         \end{array}\right.
\]
We denote by $\cQ_I$ the set of all these intersection points. They include the vertices of the vertex figure of $e_I$. For those we have $\size(I-J)=1$. 
A lifting function $\lambda$ of $\Delta(d,n)$ induces a lifting on each point $q\in\cQ_I$ by taking 
\[
   \lambda(q) = \lambda\left(\frac{\size(I-J)-1}{\size(I-J)} e_I + \frac{1}{\size(I-J)}  e_J\right) = \frac{\size(I-J)-1}{\size(I-J)} \lambda(e_I) + \frac{1}{\size(I-J)}  \lambda(e_J) \enspace .
\]

From Proposition~\ref{prop:induced_subdivision} follows that for each $k$-split of $\Delta(d,n)$ there exists a $d$-set $I$ and a vertex $e_I$ such that the $k$-split on $\Delta(d,n)$ induces a $k$-split on the point configuration $\cQ_I$.
Our goal is to show that all interior points of $\conv \cQ_I$ are negligible. 

Before we discuss this in general let us take a closer look on a key example where $d = n-d$.
In the example, the point configuration consists only of the vertices and exactly one additional point. 
This example will be central in the rest of the argumentation.

Consider the point configuration $\cP_j$ with the vertices of $\Delta_{j-1}\times\Delta_{j-1}$ and exactly one additional point $q$ which is $\sum_{i=1}^{2j} \frac{1}{j}e_i$ the barycenter of $\Delta_{j-1}\times\Delta_{j-1}$.

\begin{lemma}\label{lem:prod_simplices_one_int_point} There is no $(2j-1)$-split of $\cP_j$.
\end{lemma}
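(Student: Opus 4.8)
The plan is to assume for contradiction that a $(2j-1)$-split $\Sigma$ of $\cP_j$ exists, and to derive structural constraints on $\Sigma$ from the combinatorics of the product of simplices $P = \Delta_{j-1}\times\Delta_{j-1}$ until these constraints collide. Recall that $\dim P = 2j-2$, so a $(2j-1)$-split has common cell $H^\Sigma$ of dimension $(2j-2)-(2j-2) = 0$; in other words, the common cell would be a single point. By Lemma~\ref{lem:propertiesH} the relative interior of $H^\Sigma$ lies in the relative interior of $P$, so $H^\Sigma$ must be the unique interior point $q$, the barycenter. This is already the crux: the common cell is forced to be $\{q\}$, but $q$ is not a vertex of $P$ and carries very little information, whereas a $(2j-1)$-split must fit $2j-1$ full-dimensional cells around it.

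First I would make the dimension count precise and pin down $H^\Sigma = \{q\}$. Then I would invoke the fan description recalled before Lemma~\ref{lem:propertiesH}: $\Sigma$ is induced by a complete fan $\cF$ in $\RR^{2j-1}$ (working in the affine hull of $P$) with apex $q$, trivial lineality, and $2j-1$ maximal cones, each a simplicial cone whose rays point from $q$ toward vertices of $P$. Since $\cF$ is a complete simplicial fan in $(2j-2)$-space with exactly $2j-1$ maximal cones, it is the face fan of a simplex: its rays are $2j-1$ vectors $r_1,\dots,r_{2j-1}$ positively spanning $\RR^{2j-2}$ with the unique (up to scaling) linear dependence $\sum \alpha_\ell r_\ell = 0$, all $\alpha_\ell > 0$, and the maximal cones are obtained by deleting one ray at a time. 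Each ray direction is $v_\ell - q$ for some vertex $v_\ell$ of $P$, so there are exactly $2j-1$ ``active'' vertices involved, while $P$ has $j^2$ vertices total.

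Next I would extract the contradiction from the affine-dependence structure. The $2j-1$ active vertices $v_1,\dots,v_{2j-1}$ must satisfy the barycentric relation $q = \sum_\ell \beta_\ell v_\ell$ with all $\beta_\ell > 0$ and $\sum\beta_\ell = 1$ (rescaling the $\alpha_\ell$), and this must be their \emph{only} affine dependence, since the rays $v_\ell - q$ have a one-dimensional space of linear relations. So $v_1,\dots,v_{2j-1}$ are $2j-1 = (2j-2)+1$ affinely independent points whose affine hull is all of $\aff P$, and $q$ lies in the interior of their convex hull — i.e., these vertices span a $(2j-2)$-simplex $S \subseteq P$ containing $q$ in its interior. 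Moreover every other vertex $w$ of $P$ must lie in one of the maximal cones, i.e., $w - q \in \cone(\{r_\ell : \ell \neq m\})$ for some $m$; writing this out, $w$ is an affine combination of $\{v_\ell : \ell\neq m\} \cup \{q\}$ with nonnegative coefficients on the $v_\ell$. The obstruction I expect to be central is then a purely convex-geometric impossibility: in $\Delta_{j-1}\times\Delta_{j-1}$ the barycenter $q = \tfrac1j\sum e_i$ cannot be written as a \emph{strictly positive} convex combination of only $2j-1$ vertices. Indeed, each vertex of $P$ is $e_a + e_{j+b}$ for $a,b\in[j]$, and a convex combination $\sum_{(a,b)} c_{ab}(e_a + e_{j+b})$ equals $q$ exactly when the matrix $(c_{ab})$ is doubly stochastic scaled by $1/j$; the support of a doubly stochastic $j\times j$ matrix with all rows and columns nonzero has size at least $j$, and if the support has size exactly $2j-1$ the matrix lies on an edge of the Birkhoff polytope, forcing its support to contain a spanning structure that, combined with ``$q$ is the \emph{unique} interior point,'' contradicts either the affine independence of the $v_\ell$ or the requirement that every remaining vertex be covered with the correct sign pattern.

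The main obstacle — and where I would spend the real work — is turning ``a $1/j$-scaled doubly stochastic matrix supported on $2j-1$ cells does not have the right combinatorial shape to be the ray structure of a complete simplicial fan with apex $q$'' into a clean contradiction. The cleanest route is probably: an edge of the Birkhoff polytope is supported on a single $j$-cycle (a Hamiltonian cycle in the $j\times j$ grid), whose support has size... reconsidering: a vertex (permutation matrix) has support $j$, an edge has support $2j - \gcd(\text{cycle structure})$, so support $2j-1$ occurs precisely for an edge between two permutations differing by a single $(j)$-cycle. Such a configuration of $2j-1$ vertices of $P$ forming that cycle is \emph{affinely dependent in more than one way} when $j \geq 3$ (the grid cycle gives extra relations), contradicting that $v_\ell - q$ span their relations one-dimensionally; and for $j=2$ one checks the three cases of $\cP_2$ directly. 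I would therefore split into $j=2$ (a short direct check on $\Delta_1\times\Delta_1$, the square with its center, which has no $3$-split — consistent with Example~\ref{ex:octsplit}) and $j\geq 3$ (the Birkhoff-edge rigidity argument above).
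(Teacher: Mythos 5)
Your setup coincides with the paper's: $\dim H^\Sigma=0$ forces $H^\Sigma=\{q\}$, the barycenter; the $2j-1$ rays of the complete fan pick out $2j-1$ vertices of $\Delta_{j-1}\times\Delta_{j-1}$ that must be affinely independent (spanning a $(2j-2)$-simplex) and must carry strictly positive barycentric coordinates for $q$. The gap is in the contradiction itself. The statement you call the central obstruction --- that the barycenter cannot be written as a strictly positive convex combination of only $2j-1$ vertices --- is false for $j\geq 3$: for $j=3$ take $q=\tfrac{1}{6}(v_{11}+v_{12}+v_{21}+v_{22})+\tfrac{1}{3}v_{33}$ with $v_{ab}=e_a+e_{3+b}$; the corresponding doubly stochastic matrix has support of size $5=2j-1$. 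So positivity alone does not finish the proof; it must genuinely be combined with affine independence, and that combination is exactly the part you defer to \enquote{where I would spend the real work.} The Birkhoff details you sketch there are also off: two permutations differing by a $c$-cycle span an edge whose support has size $j+c$, so size $2j-1$ corresponds to a $(j-1)$-cycle, not a $j$-cycle, and a doubly stochastic matrix with support of size $2j-1$ need not lie on an edge at all (the face has dimension $c-1$, where $c$ is the number of connected components of the support graph, and $c$ can exceed $2$). Your route can be repaired: a bipartite support graph on $2j$ vertices with $2j-1$ edges that is a union of perfect matchings cannot be a tree (a tree has at most one perfect matching, accounting for only $j$ of the edges), hence contains an even cycle, and the alternating sum around that cycle is a nontrivial affine dependence among your active vertices, contradicting that they are the vertices of a simplex. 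But as written, this decisive step is asserted rather than proved, and the false \enquote{obstruction} it is meant to replace is what the proposal actually leans on.

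For comparison, the paper finishes with a short count that avoids Birkhoff entirely: the $2j-1$ active vertices have $2(2j-1)=4j-2<4j$ ones in total while all $2j$ coordinates of $q$ equal $1/j$ and so must each be covered, hence some coordinate $\ell$ is hit by exactly one vertex $w$, forcing its coefficient to be $q_\ell=1/j$; full-dimensionality of the simplex forces some other active vertex $v$ to share a support coordinate $m$ with $w$, and since $q_m=1/j$ is already exhausted by $w$, the coefficient of $v$ must be $0$, contradicting $q\in\relint Q$. Your cycle/affine-dependence argument, once completed, is a legitimate alternative, but it uses strictly more machinery for the same conclusion.
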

\begin{proof}
Let us assume we have given a $(2j-1)$-split $\Sigma$ of $\cP_j$.
The dimension of $\Delta_{j-1}\times\Delta_{j-1}$ is $2j-2$, hence the common cell $H^\Sigma$ is $(2j-2)-(k-1)$ dimensional. In our situation the dimension is $0$.
The only $0$ dimensional cell in the interior is $\{q\} = H^\Sigma$.
Let $\cF$ be the complete fan that induces $\Sigma$. The apex of $\cF$ has to be $q$.
Proposition~\ref{prop:ksplitHer} shows that this fan has $k=2j-1$ rays. 
Each of these $2j-1$ rays intersects $\Delta_{j-1}\times\Delta_{j-1}$ in a point on the boundary.
An intersection point has to be an element of the point configuration and hence it is a vertex of $\Delta_{j-1}\times\Delta_{j-1}$.
The convex hull $Q$ of all $2j-1$ vertices that we obtain as an intersection of the boundary with a ray is a $2j-2$-dimensional simplex in $\RR^{2j}$. 
This simplex $Q$ contains $q$ in its interior, since $\cF$ is complete. 
   By Lemma~\ref{lem:propertiesH} we have that $q$ is in the relative interior of $\conv\cP_j$. 
   Hence, no coordinate of $q$ is integral, while the vertices are $0/1$-vectors.
   This implies that for each of the $2j$ coordinates of $q$ there is a vertex of the simplex that is $1$ in this coordinate.
   A vertex of $\Delta_{j-1}\times\Delta_{j-1}$ has only two non zero entries, hence there is at least one coordinate $\ell\in[2j]$ such that only one vertex $w\in Q$ fulfills $w_\ell = 1$.
We deduce that the coefficient of $w$ in the  convex combination of the vertices that sums up to $q$ is $\frac{1}{j}$.

The simplex $Q$ is of dimension $2j-2$, which is the dimension of $\Delta_{j-1}\times\Delta_{j-1}$.
   Hence another vertex $v$ exists in $Q$, such that the support of $v$ and the support $w$ intersect non-trivially.
   The coefficient of $w$ is $\frac{1}{j}$, hence the coefficient of $v$ has to be $0$.
   This contradicts the fact that $q$ is in the interior of the simplex.
\end{proof}

\begin{remark}
   The proof of Lemma~\ref{lem:prod_simplices_one_int_point} shows that the barycenter of $\Delta_{j-1}\times\Delta_{j-1}$ is on the boundary of the constructed simplex $Q$.
   In fact, the arguments of the proof apply to any $(j+1)$-dimensional subpolytope of $\Delta_{j-1}\times\Delta_{j-1}$, instead of the subpolytope $Q$. 
   Hence, in any triangulation the barycenter is contained in a $j$-dimensional simplex.
\end{remark}

Our next step is to reduce the general case to the case where $2d = n$, which is equivalent to $\size I = d = n-d$, and the point configuration is $\cQ_I$. This is close to the situation in Lemma~\ref{lem:prod_simplices_one_int_point}, but still not the same.

For any non-vertex $p\in\cQ_I$ we define 
\[
   F_p=\SetOf{x\in\Delta(d,n)}{ \sum_{i\in I} x_i =d-1 \text{ and } x_j=p_j \text{ for all } p_j\in\{0,1\}  } \enspace.
\]
By definition the only point in $\cQ_I\cap \relint \fig(e_I)$ is $q$ and there is a unique $d$-set $J$ such that $q \in \conv(e_I,e_J)$.
Clearly $\size(I-J) = \size I - \size(I\cap J) = \size J - \size(I\cap J) = \size(J-I)$ and
\[
   q_j \text{ is non-integral if and only if } j\in I-J \text{ or } j\in J-I \enspace .
\]
The coordinatewise affine transformation $x_j\mapsto 1-x_j$ if $j\in I-J$ and $x_j\mapsto x_j$ if $j\in J-I$ is an isomorphism between the face $F_q$ of the vertex figure $\fig(e_I)$ and the product of simplices $\Delta_{j-1}\times\Delta_{j-1}$ for $j= d - \size I\cap J$. The point $q$ is mapped to the barycenter.
  
The common cell $H^\Sigma$ is either $\{q\}$ or $\fig(e_I)$.
Hence, the only possibilities for a multi-split of the point configuration $\cQ_I \cap F_q$ are $2j-1$ or $1$ maximal cell.
The multi-split is induced by the polytope $\Delta(d,n)\cap\aff\{e_I, F_q\}$.
A $1$-split can not be induced by a polytope. Therefore it has to be a $2j-1$-split. 
All together we get the following result for arbitrary multi-splits.
\begin{lemma}\label{lem:negible_points} 
   Let $\Sigma$ be a multi-split of the point configuration $\cQ_I$.
   All points of $\cQ_I\setminus\{0,1\}^n$ are negligible in $\Sigma$.
\end{lemma}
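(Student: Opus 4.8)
The plan is to prove negligibility for one non-vertex point at a time. Fix $p\in\cQ_I\setminus\{0,1\}^n$, let $J\in\binom{[n]}{d}$ be the unique $d$-set with $p\in\conv(e_I,e_J)$, put $j:=\size(I-J)=\size(J-I)\ge 2$, and let $F_p$ be the smallest face of $\conv\cQ_I=\fig(e_I)$ containing $p$, i.e.\ the one obtained by freezing the integral coordinates of $p$. The argument will be an induction on $j$, each step bottoming out in Lemma~\ref{lem:prod_simplices_one_int_point}. First I would do the bookkeeping that extends what is recorded above: the coordinatewise affine map $x_\ell\mapsto 1-x_\ell$ for $\ell\in I-J$ and $x_\ell\mapsto x_\ell$ otherwise identifies $F_p$ with $\Delta_{j-1}\times\Delta_{j-1}$ and $p$ with its barycenter, and it carries $\cQ_I\cap F_p$ onto the intersection-point configuration $\cQ_{I-J}$ of that product of simplices, viewed as the vertex figure of $e_{I-J}$ in a copy of $\Delta(j,2j)$, with $p$ its unique point in the relative interior; moreover every other non-vertex point of $\cQ_I$ lying in $F_p$ has a strictly smaller associated integer $j'<j$. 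So, after this reduction, we are in the \enquote{$2d=n$} situation and $p$ is the analogue of the interior point of Lemma~\ref{lem:prod_simplices_one_int_point}.

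Second, I would transfer the statement to $F_p$. Since $\Sigma$ is regular (Proposition~\ref{prop:ksplitHer}), fix a lifting $\lambda$ inducing it. Because $F_p$ is a face of $\conv\cQ_I$, the configuration points projecting into $F_p$ are exactly those of $\cQ_I\cap F_p$, and the part of the lifted polytope lying over $F_p$ is the convex hull of the lifts of the points of $\cQ_I\cap F_p$; hence the restriction $\Sigma|_{F_p}$ is the regular subdivision of $\cQ_I\cap F_p$ induced by $\lambda|_{\cQ_I\cap F_p}$, and a configuration point in $F_p$ is negligible in $\Sigma$ if and only if it is negligible in $\Sigma|_{F_p}$. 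By the induction hypothesis every non-vertex point of $\cQ_I$ in $F_p$ other than $p$ is negligible in $\Sigma$, hence in $\Sigma|_{F_p}$, so by Proposition~\ref{prop:negible} these may be deleted one after another; what remains is the configuration $\cP_j$ formed by the vertices of $F_p$ together with $p$, carrying a regular subdivision that agrees with $\Sigma|_{F_p}$ on the status of the point $p$.

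Third, I would pin down that subdivision of $\cP_j$. It is induced, in the sense of a subdivision induced by a polytope, by $F_p$ sitting inside $\Delta(d,n)\cap\aff\{e_I,F_p\}$, and running the argument of Proposition~\ref{prop:induced_subdivision} shows it is again a multi-split of $\cP_j$. By Lemma~\ref{lem:propertiesH} its common cell has relative interior inside $\relint F_p$, and $p$ is the only point of $\cP_j$ lying there, so a common cell of positive dimension that is not all of $F_p$ would let one re-run the $0/1$-versus-non-integral counting of Lemma~\ref{lem:prod_simplices_one_int_point} (as in the remark following it) and reach a contradiction; hence the common cell is $\{p\}$ or all of $F_p$. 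If it is all of $F_p$, the subdivision is trivial, $p$ lies in a cell but is a vertex of no $2$-dimensional cell, so $p$ is negligible. If it is $\{p\}$, the subdivision is a $(2j-1)$-split of $\cP_j$, contradicting Lemma~\ref{lem:prod_simplices_one_int_point}. Either way $p$ is negligible in $\Sigma$, which completes the induction.

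The delicate point, and what I expect to be the main obstacle, is the claim in the third step that the reduct to $\cP_j$ is again a multi-split whose common cell can only be $\{p\}$ or $F_p$: the restriction of a coarsest subdivision to a face need not be coarsest, so this dichotomy is not inherited from $\Sigma$ and has to be read off from the geometry — using Lemma~\ref{lem:propertiesH}, the fact that a $1$-split cannot be induced by a polytope, and a reprise of the non-integrality argument of Lemma~\ref{lem:prod_simplices_one_int_point}. It is precisely the hypothesis $\size I=\size J$ that forces the two simplex factors of $F_p$ to have equal dimension, so that Lemma~\ref{lem:prod_simplices_one_int_point} applies verbatim; making this reduction airtight is where the real work lies.
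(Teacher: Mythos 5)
Your overall strategy is the paper's: reduce to the face $F_p\cong\Delta_{j-1}\times\Delta_{j-1}$ with $p$ the barycenter, use negligibility of the other non-integral points of $F_p$ (you get this by induction on $j$, the paper by choosing $p$ with inclusion-minimal non-integral support) to cut the configuration down to $\cP_j$, and then invoke Lemma~\ref{lem:prod_simplices_one_int_point}. However, one claim in your third step is simply false: a common cell of positive dimension that is not all of $F_p$ does \emph{not} ``let one re-run the counting of Lemma~\ref{lem:prod_simplices_one_int_point} and reach a contradiction.'' Already for $j=2$ the configuration $\cP_2$ (square plus center) admits the two $2$-splits along its diagonals, whose common cells are exactly such proper positive-dimensional cells; these are perfectly good multi-splits. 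The dichotomy you need is different. By Lemma~\ref{lem:propertiesH} the common cell is $\aff H^{\Sigma}\cap F_p$, and since $p$ lies in $\relint F_p$, the point $p$ is a vertex of this intersection only when the intersection is the single point $\{p\}$. In every other case --- trivial subdivision or positive-dimensional common cell --- $p$ lies in the relative interior of a cell of dimension at least one that is a face of every cell containing $p$, so $p$ is a vertex of no cell and is negligible. The intermediate case you try to exclude is thus not impossible; it is one of the good cases, and with that correction the case analysis closes: either $p$ is negligible outright, or $H^{\Sigma}=\{p\}$ forces a $(2j-1)$-split of $\cP_j$, contradicting Lemma~\ref{lem:prod_simplices_one_int_point}.

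The second problem is the step you yourself flag as delicate: that the restriction of the multi-split to the face $F_p$ is again trivial or a multi-split. This cannot be obtained by ``running the argument of Proposition~\ref{prop:induced_subdivision},'' which concerns the subdivision induced on a \emph{vertex figure}; $F_p$ is a face of $\conv\cQ_I$, not a vertex figure, and as you note the restriction of a coarsest subdivision to a face need not be coarsest. The paper closes exactly this gap by citing \cite[Proposition~4.8]{Herrmann:2011}, which states that the subdivision induced on any face of a $k$-split is either trivial or again a multi-split. With that citation in place of your appeal to Proposition~\ref{prop:induced_subdivision}, and with the corrected case analysis above, your induction on $j$ becomes a valid reorganization of the paper's minimal-counterexample argument.
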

\begin{proof}
   To each $q\in\cQ_I$ we assign the set $\SetOf{i\in[n]}{q_i\not\in\ZZ}$ of non-integral support.
   A point $q\in\cQ_I$ is a $0/1$-vector if and only if its non-integral support is empty.
   Consider a ray $R$ in the fan $\cF$, i.e., the dimension of $R$ is $\dim(H^\Sigma)+1$.
   Let $V_R \subseteq \cQ_I$ be the set of all points of the intersection $R \cap \conv\cQ_I$.
   Fix a point 
   \[
      p \in \SetOf{ q\in V_R}{ \text{The non-integral support of $q$ is non empty} }
   \]
   whose non-integral support is inclusionwise minimal in the above set.
   Our goal is to show that such a $p$ does not exist and hence the above set is empty.
   This implies that any point $q\in V_R$ is integral.

   From \cite[Proposition 4.8]{Herrmann:2011} follows that the face $F_p$ is either trivially subdivided or a multi-split.
   In a trivial subdivision the interior point $p$ is not a vertex of $R\cap \conv\cQ_I$.
   By construction all the non integral points in $F_p$ except for $p$ are negligible, otherwise $p$ would not be a vertex of $V_R$.
   Moreover, $p$ is the only interior point and $k=2j-1$, where $j$ is the size of the non-integral support.
   This contradicts Lemma~\ref{lem:prod_simplices_one_int_point}.
   We conclude that the above constructed set is empty.
   Hence all non integral points in $\cQ_I$ are negligible.
\end{proof}

Proposition~\ref{prop:induced_subdivision} and Lemma~\ref{lem:negible_points} show that the induced subdivision is a subdivision of the vertex figure $\fig(e_I)$, which is a product of simplices.
This reverses a construction that lifts regular subdivisions of the product of simplices $\Delta_{d-1}\times\Delta_{n-d-1}$ to the hypersimplex $\Delta(d,n)$.
This lift has been studied in the context of tropical convexity in \cite{HerrmannJoswigSpeyer:2014}, \cite{Rincon:2013} and \cite{FinkRincon:2015}.
 We define the tropical Stiefel map of a regular subdivision on the product of simplices $\Delta_{d-1}\times\Delta_{n-d-1}$.
 We denote by $\lambda(i,j)\in\RR$ the height of the vertex $(e_i,e_j)\in\Delta_{d-1}\times\Delta_{n-d-1}$.
 The \emph{tropical Stiefel map} $\pi$ is defined on sets $A \subseteq \{1,\ldots,d\}, B\subseteq\{d+1,\ldots,n\}$ with $\size A = \size B$
 \[
    \pi: (A,B) \mapsto \min_{\omega \in \Sym(B)} \sum_{i\in A} \lambda(i,\omega_i)
 \]
 where $\Sym(B)$ is the symmetry group on the set $B$.
 Note that $\pi( \{i\},\{j\} ) = \lambda(i,j)$.

Let $e_I\in\Delta(d,n)$ be a vertex and $\lambda$ be a lifting on $\Delta_{d-1}\times\Delta_{n-d-1}$.
Then the tropical Stiefel map defines a lifting on a vertex $e_J\in\Delta(d,n)$ by taking the height $\pi(I-J, J-I)$.
The polytope $\Delta_{d-1}\times\Delta_{n-d-1}$ is isomorphic to $\fig(e_I) = \Delta_{d-1,I}\times\Delta_{1,[n]-I}\subsetneq\Delta(d,n)$. The tropical Stiefel map extends a lifting of the vertex figure $\fig(e_I)$ to the entire hypersimplex $\Delta(d,n)$.
The dual complex of the extended subdivision of $\Delta(d,n)$ is isomorphic to the dual complex of the subdivision of $\Delta_{d-1}\times\Delta_{n-d-1}$; see \cite[Theorem 7]{HerrmannJoswigSpeyer:2014}. In particular, the Stiefel map extends a $k$-split of $\Delta_{d-1}\times\Delta_{n-d-1}$ to a $k$-split of $\Delta(d,n)$.

  From Lemma~\ref{lem:negible_points} we deduce.
\begin{theorem} \label{thm:main1}
   Any $k$-split of the hypersimplex $\Delta(d,n)$ is the image of a $k$-split of a product of simplices $\Delta_{d-1}\times\Delta_{n-d-1}$ under the Stiefel map.
   In particular, the $k$-split $\Sigma$ is an extension of a $k$-split of $\Delta(d,I)\times\Delta(n-d,[n]-I)$ if and only if $e_I\in H^\Sigma$. 
\end{theorem}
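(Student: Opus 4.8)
The plan is to run the reduction developed in this section forwards, down to the vertex figure, and then reverse it with the Stiefel map. So let $\Sigma$ be a $k$-split of $\Delta(d,n)$ and let $\cF$ be the complete fan inducing it. Since $H^\Sigma$ is a cell of a subdivision of the polytope $\Delta(d,n)$, it contains a vertex, i.e.\ $e_I\in H^\Sigma$ for some $d$-set $I$; I fix such an $I$ and choose $e_I$ as the apex of $\cF$. By Proposition~\ref{prop:induced_subdivision} the $k$-split $\Sigma$ then induces a $k$-split $\Sigma'$ on the point configuration $\cQ_I$, which sits inside the vertex figure $\fig(e_I)\simeq\Delta_{d-1}\times\Delta_{n-d-1}$.

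Next I would pass from $\cQ_I$ to the vertices of $\fig(e_I)$. By Lemma~\ref{lem:negible_points} every point of $\cQ_I\setminus\{0,1\}^n$ is negligible in $\Sigma'$, and the points of $\cQ_I$ that remain are precisely the vertices of the product of simplices $\fig(e_I)$. Applying the bijection of Proposition~\ref{prop:negible} once for each negligible point, $\Sigma'$ corresponds to a subdivision $\Sigma''$ of $\fig(e_I)$ itself. Since deleting a negligible, hence redundant, lifted point is compatible with regularity and with the coarsening order and leaves the dual complex of the lower faces unchanged, $\Sigma''$ is again a $k$-split, now of $\Delta_{d-1}\times\Delta_{n-d-1}$.

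The reverse step recovers $\Sigma$ from $\Sigma''$. Let $\lambda$ be the canonical lifting of $\fig(e_I)$ that induces $\Sigma''$, so that $\lambda(e_I)=0$, and let $\pi(\Sigma'')$ be the subdivision of $\Delta(d,n)$ obtained from the Stiefel heights, i.e.\ $e_J$ is lifted to $\pi(I-J, J-I)$. By \cite[Theorem~7]{HerrmannJoswigSpeyer:2014} this is a regular subdivision with the same dual complex as $\Sigma''$, hence a $k$-split, and its inducing fan has apex $e_I$, so $e_I$ lies in the common cell of $\pi(\Sigma'')$. Because $\pi(\{i\},\{j\})=\lambda(i,j)$ and $\pi(\emptyset,\emptyset)=0=\lambda(e_I)$, the heights $\pi(\Sigma'')$ assigns to the vertices of $\fig(e_I)$ and to $e_I$ coincide with those of the canonical lifting of $\Sigma$, and then by the convex-combination formula $\lambda(q)=\frac{\size(I-J)-1}{\size(I-J)}\lambda(e_I)+\frac{1}{\size(I-J)}\lambda(e_J)$ the heights induced on all of $\cQ_I$ agree as well. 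Thus $\pi(\Sigma'')$ and $\Sigma$ are $k$-splits of $\Delta(d,n)$ with $e_I$ in the common cell inducing the same $k$-split on $\cQ_I$; invoking the fan-and-lifting construction recalled after Proposition~\ref{prop:ksplitHer} — the fan of a $k$-split with apex $e_I$ is recovered, together with $\aff H^\Sigma$, from the $k$-split it induces on a vertex figure of $e_I$, and the subdivision is its restriction to $\Delta(d,n)$ — such a $k$-split is determined by $e_I$ together with that induced $k$-split. Hence $\pi(\Sigma'')=\Sigma$, which is the first assertion.

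The equivalence now falls out. For the direction assuming $e_I\in H^\Sigma$, the construction just given exhibits $\Sigma$ as the Stiefel extension of a $k$-split of $\fig(e_I)$, which is the product of simplices in the statement; for the converse, if $\Sigma$ is the Stiefel extension of a $k$-split of $\fig(e_I)$, then, as noted above, its inducing fan has apex $e_I$, so $e_I\in H^\Sigma$. The step I expect to be the real obstacle is the uniqueness at the end of the reverse step: proving that the Stiefel-extended subdivision is literally equal to $\Sigma$, not merely a $k$-split of the same combinatorial type. This rests on the rigidity of a $k$-split once a vertex of its common cell is fixed, read off the explicit fan construction of Proposition~\ref{prop:ksplitHer}, together with the check that the Stiefel heights restrict correctly to $\cQ_I$. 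A lesser point that deserves care is verifying that stripping negligible points sends $k$-splits to $k$-splits, i.e.\ that the bijection of Proposition~\ref{prop:negible} preserves the property of being a coarsest regular subdivision with simplex dual complex.
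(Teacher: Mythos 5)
Your proposal follows the paper's own argument essentially step for step: pick a vertex $e_I\in H^\Sigma$, induce a $k$-split on $\cQ_I$ via Proposition~\ref{prop:induced_subdivision}, strip the negligible non-integral points using Lemma~\ref{lem:negible_points} and Proposition~\ref{prop:negible} to land on the product of simplices, and extend back with the Stiefel map, identifying the result with $\Sigma$ because both are determined by the coning construction over the induced $k$-split of the vertex figure. The uniqueness step you flag as the main obstacle is exactly what the paper disposes of with the same (terse) coning argument, so your write-up is a correct, slightly more detailed rendering of the same proof.
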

\begin{proof}
   For any $k$-split $\Sigma$ of the hypersimplex $\Delta(d,n)$ and any vertex $e_I\in H^\Sigma$ the $k$-split $\Sigma$ induces a $k$-split on the point configuration $\cQ_I$.
   By Proposition~\ref{prop:negible} and Lemma~\ref{lem:negible_points} this is a subdivision on the vertex figure $\fig(e_I)$, which is a product of simplices.
   The Stiefel map extends this $k$-split to a $k$-split on $\Delta(d,n)$ by coning over the cells.
   This $k$-split coincides with $\Sigma$ on $\fig(e_I)$ and hence do both $k$-splits on the hypersimplex $\Delta(d,n)$.
\end{proof}

\section{Matroid subdivisions and multi-splits}\label{sec:nested}
\noindent
In this section we will further analyze multi-splits of the hypersimplex.
Our goal is to describe the polytopes that occur as maximal cells.
We will see that these polytopes correspond to a particular class of matroids.

A subpolytope $P$ of the hypersimplex $\Delta(d,n)$ is called a \emph{matroid polytope} if the vertex-edge graph of $P$ is a subgraph of the vertex-edge graph of $\Delta(d,n)$.
Note that the vertices of a matroid polytope are $0/1$-vectors and a subset of those of the hypersimplex.

The vertices of a matroid polytope $P$ are the characteristic vectors of the bases of a matroid $\matroid(P)$.
The convex hull of the characteristic vectors of the bases of a matroid $M$ is the matroid polytope $\polytope(M)$.
See \cite{Oxley:2011} and \cite{White:1986} for the basic background of matroid theory and \cite{Edmonds:1970} for a polytopal description, that we used as definition.

We will give three examples of classes of matroids that are important for this section.
\begin{example} 
   Clearly the hypersimplex $\Delta(d,n)$ itself is a matroid polytope.
   The matroid $\matroid(\Delta(d,n))$ is called \emph{uniform matroid} of rank $d$ on $[n]$ elements.
The $d$-subsets of $[n]$ are exactly the bases of $\matroid(\Delta(d,n))$.
   The uniform matroid has the maximal number of bases among all $(d,n)$-matroids.
\end{example}

\begin{example}\label{ex:partition} 
   Let $C_1,\ldots,C_k$ be a partition of the set $[n]$ and $d_i\leq\size(C_i)$ non-negative integers.
   The matroid $\matroid( \Delta(d_1,C_1)\times\cdots\times(\Delta(d_k,C_k) )$ is called \emph{partition matroid} of rank $d_1+ \ldots +d_k$ on $[n]$.
   A $d$-subset $S$ of $[n]$ is a basis of this matroid if $\size(S\cap C_i)=d_i$.
\end{example}

\begin{example} \label{ex:nested} 
   Let $\emptyset\subsetneq F_1\subsetneq \ldots \subsetneq F_k\subseteq[n]$ be an ascending chain of sets and $0 \leq r_1 < r_2 < \ldots < r_k$ be integers with $r_\ell < \size(F_\ell)$ for all $\ell\leq k$.
   The polytope
   \[
      P \ = \ \SetOf{x\in\Delta(d,n)}{\sum_{i\in F_\ell} x_i \leq r_\ell}
   \]
   is a matroid polytope. This follows from the analysis of all $3$-dimensional octahedral faces of the hypersimplex. Non of those is separated by more than one of the additional inequalities and hence the polytope is a matroid polytope. The matroid $\matroid(P)$ is called \emph{nested matroid} of rank $r_k+\size([n]-F_k)$ on $[n]$.
   The sets $F_1,\ldots,F_k$ are the \emph{cyclic flats} of the nested matroid $\matroid(P)$ if $r_1=0$.
   If $r_1\neq 0$, then the above and $\emptyset$ are the cyclic flats.
\end{example}

\begin{remark}
   There are many cryptomorphic definitions for matroids.
   Bonin and de Mier introduced in \cite{BoninMier:2008} the definition via cyclic flats and their ranks, i.e., unions of minimal dependent sets.
   In this paper we only need the very special case of nested matroids, where the lattice of cyclic flats is a chain.
\end{remark}

A \emph{matroid subdivision} of $\Delta(d,n)$ is a subdivision into matroid polytopes, i.e., all the (maximal) cells in the subdivision are matroid polytopes.
The lifting function of a regular subdivision of a matroid polytope is called \emph{tropical Pl\"ucker vector}, 
since it arises as valuation of classical Pl\"ucker vectors.
Note that the tropical Pl\"ucker vectors form a subfan in the secondary fan of the hypersimplex $\Delta(d,n)$. This fan is called the \emph{Dressian $\Dr(d,n)$}.

Each multi-split of the hypersimplex $\Delta(d,n)$ is a matroid subdivision as Theorem~\ref{thm:main1} in combination with the following proposition shows.
\begin{proposition}[{\cite{Rincon:2013},\cite{HerrmannJoswigSpeyer:2014}}]
   The image of any lifting function on $\Delta_{d-1}\times\Delta_{n-d-1}$ under the Stiefel map is a 
   tropical Pl\"ucker vector.
\end{proposition}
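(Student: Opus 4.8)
The plan is to recognise the tropical Stiefel map $\pi$ as the tropicalisation of the classical Stiefel map, which reduces the statement to the standard principle that coordinatewise valuations of genuine Pl\"ucker vectors satisfy the tropical Pl\"ucker relations. Fix an algebraically closed field $K$ carrying a valuation $\mathrm{val}\colon K^\times\to\RR$ (for instance a field of Hahn series over $\CC$), and let $X=[\,I_d\mid Y\,]$ be the $d\times n$ matrix over $K$ whose first $d$ columns form the identity and whose last $n-d$ columns form a $d\times(n-d)$ matrix $Y=(y_{ij})$. First I would record the classical cofactor identity that underlies the ordinary Stiefel map: expanding the maximal minor of $X$ along its unit columns gives
\[
  P_J(X)\ :=\ \det(X_J)\ =\ \pm\,\det\big(Y_{[d]\setminus J,\;J\setminus[d]}\big)\qquad\text{for every }J\in\tbinom{[n]}{d},
\]
where $X_J$ denotes the column submatrix of $X$ on $J$, $Y_{A,B}$ the submatrix of $Y$ on rows $A$ and columns $B$, and $\size([d]\setminus J)=\size(J\setminus[d])$ since $\size J=d$; taking $I=[d]$, this is the minor of $Y$ on rows $I-J$ and columns $J-I$. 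As $X$ has rank $d$, the tuple $\bigl(P_J(X)\bigr)_J$ is an honest Pl\"ucker vector and so satisfies the three-term Grassmann--Pl\"ucker relations $P_{Sab}P_{Sce}-P_{Sac}P_{Sbe}+P_{Sae}P_{Sbc}=0$ for all $S\in\tbinom{[n]}{d-2}$ and distinct $a,b,c,e\notin S$ (here $Sab$ abbreviates $S\cup\{a,b\}$, and so on).

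Next I would specialise $Y$ to the matrix with entries $y_{ij}=c_{ij}\,t^{\lambda(i,j)}$, where $\lambda$ is the prescribed lifting on $\Delta_{d-1}\times\Delta_{n-d-1}$ and the constants $c_{ij}\in\CC\subset K$ are algebraically independent over $\QQ$ (such a family exists since $\CC$ has infinite transcendence degree over $\QQ$). For any rows $A$ and columns $B$ with $\size A=\size B$, expanding $\det(Y_{A,B})=\sum_\omega\mathrm{sgn}(\omega)\bigl(\prod_{i\in A}c_{i,\omega_i}\bigr)t^{\sum_{i\in A}\lambda(i,\omega_i)}$ over bijections $\omega\colon A\to B$, the coefficient of the lowest power $t^{m}$, with $m=\min_\omega\sum_{i\in A}\lambda(i,\omega_i)$, equals $\sum_\omega\pm\prod_{i\in A}c_{i,\omega_i}$ over those $\omega$ attaining $m$. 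Distinct bijections contribute distinct monomials in the $c_{ij}$, so this coefficient is a nonzero polynomial with $\pm1$ coefficients and hence does not vanish at our generic point; therefore $\mathrm{val}(\det Y_{A,B})=m=\pi(A,B)$. Combined with the cofactor identity, $\mathrm{val}(P_J(X))=\pi(I-J,\,J-I)$ for every $J$, so the coordinatewise valuation of $\bigl(P_J(X)\bigr)_J$ is exactly the image of $\lambda$ under the tropical Stiefel map $\pi$.

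Finally I would take valuations of the Grassmann--Pl\"ucker relations: in any identity $\alpha-\beta+\gamma=0$ over $K$ the minimum of $\mathrm{val}(\alpha),\mathrm{val}(\beta),\mathrm{val}(\gamma)$ is attained at least twice, so setting $p_J:=\pi(I-J,J-I)=\mathrm{val}(P_J(X))$ we get that $\min\{p_{Sab}+p_{Sce},\ p_{Sac}+p_{Sbe},\ p_{Sae}+p_{Sbc}\}$ is attained at least twice for all $S\in\tbinom{[n]}{d-2}$ and distinct $a,b,c,e\notin S$; these are precisely the three-term tropical Pl\"ucker relations. By Speyer's characterisation of matroid subdivisions of the hypersimplex (see \cite{SpeyerSturmfels:2004}, and compare \cite{HerrmannJoswigSpeyer:2014} and \cite{Rincon:2013}), a real vector on $\tbinom{[n]}{d}$ satisfying these relations induces a matroid subdivision of $\Delta(d,n)$, i.e.\ lies in the Dressian $\Dr(d,n)$ and is a tropical Pl\"ucker vector. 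Since genericity was imposed only on the auxiliary constants $c_{ij}$ and not on $\lambda$, this applies to every lifting function $\lambda$, proving the proposition.

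I expect the one genuinely delicate point to be the non-cancellation step $\mathrm{val}(\det Y_{A,B})=\pi(A,B)$: it has to survive the case where several bijections attain the tropical minimum, and this is exactly what the distinctness of the monomials $\prod_{i\in A}c_{i,\omega_i}$ together with the algebraic independence of the $c_{ij}$ ensures. Should one wish to avoid this genericity argument, one can instead settle the statement for generic $\lambda$ and then extend it to every $\lambda$ by continuity of $\pi$ (a minimum of finitely many linear functions) together with the closedness of the set of tropical Pl\"ucker vectors, i.e.\ of the Dressian $\Dr(d,n)$.
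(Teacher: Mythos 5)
Your argument is correct, but note that the paper does not prove this proposition at all: it is imported verbatim from the cited sources (Rinc\'on; Herrmann--Joswig--Speyer), so there is no in-paper proof to compare against. What you supply is the standard ``lift to a valued field'' argument: realize the tropical Stiefel map as the coordinatewise valuation of the maximal minors of $[\,I_d\mid Y\,]$ over a Hahn/Puiseux field, use generic coefficients $c_{ij}$ to rule out cancellation in the leading term of $\det(Y_{A,B})$, and then push the three-term Grassmann--Pl\"ucker relations through the valuation to land in the Dressian via Speyer's characterization of matroid subdivisions. All the individual steps check out: the cofactor identity, the non-cancellation argument (distinct bijections give distinct monomials in the $c_{ij}$, so algebraic independence suffices), and the ``minimum attained twice'' principle; note also that every $P_J$ is genuinely nonzero here because of the identity block, so all valuations are finite. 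This route is more algebro-geometric than the proofs in the cited references, which argue combinatorially -- Rinc\'on via the exchange axiom for valuated (transversal) matroids applied directly to tropical minors, and Herrmann--Joswig--Speyer via the induced polyhedral subdivision -- and it quietly uses that the three-term relations characterize the Dressian, which is Speyer's \emph{Tropical linear spaces} rather than \cite{SpeyerSturmfels:2004}; your fallback via continuity of $\pi$ and closedness of $\Dr(d,n)$ is also sound. Two cosmetic points: you should say explicitly that taking $I=[d]$ is without loss of generality by the coordinate-permutation symmetry of $\Delta(d,n)$, and that $\pi(\emptyset,\emptyset)=0$ matches $P_{[d]}=1$.
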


From now on let $\Sigma$ be a $k$-split of the hypersimplex $\Delta(d,n)$.
We investigate which matroid polytopes appear in the subdivision $\Sigma$.

Let us shortly introduce some matroid terms.
A set $S$ is \emph{independent} in the matroid $M$ if it is a subset of a basis of $M$.
The rank $\rank(S)$ of a set $S$ is the maximal size of an independent set in $S$. 
An important operation on a matroid $M$ is the \emph{restriction} $M|F$ to a subset $F$ of the ground set.
The set $F$ is the ground set of $M|F$.
A set $S\subseteq F$ is independent in $M|F$ if and only if $S$ is independent in $M$.
A matroid $M$ is called \emph{connected} if there is no set $\emptyset\subsetneq S \subsetneq [n]$ with
$\polytope(M) = \polytope(M|S)\times \polytope(M|([n]-S))$.
For each matroid $M$ there is a unique partition $C_1,\ldots,C_k$ of $[n]$, such that 
$\polytope(M) = \polytope(M|C_1)\times\cdots\times\polytope(M|C_k)$. The sets  $C_1,\ldots,C_k$ are called \emph{connected components} of $M$.
The element $e\in[n]$ is called \emph{loop} if $\{e\}$ is a connected component and $\polytope(M|\{e\}) = \Delta(0,\{e\})$.
If instead $\polytope(M|\{e\}) = \Delta(1,\{e\})$, then $e$ is called \emph{coloop}.
The dual operation of the restriction is the \emph{contraction} $M/F$.
The ground set of the matroid $M/F$ is $[n]-F$.
A set $S$ is independent in $M/F$ if $\rank_M(S+F)=\size(S)+\rank_M(F)$. 

The following describes a relation of the connected components of a matroid and its matroid polytope.
\begin{lemma}[{\cite[Theorem 3.2]{Fujishige:1984} and \cite[Propositions 2.4]{FeichtnerSturmfels:2005}}]\label{lem:dim}
   The number of connected components of a matroid $M$ on the ground set $[n]$
   equals the difference $n - \dim\polytope(M)$.
\end{lemma}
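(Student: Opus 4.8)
The statement to prove is Lemma~\ref{lem:dim}: the number of connected components of a matroid $M$ on $[n]$ equals $n - \dim \polytope(M)$.

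The plan is to proceed in two independent directions and combine them. First I would establish the inequality $\dim\polytope(M) \le n - (\text{number of components})$, which follows almost for free from the product structure. Write $[n] = C_1 \sqcup \dots \sqcup C_c$ for the connected components, so that $\polytope(M) = \polytope(M|C_1) \times \cdots \times \polytope(M|C_c)$. Each factor $\polytope(M|C_i)$ lives in $\RR^{C_i}$ and satisfies the single affine relation $\sum_{j \in C_i} x_j = \rank_M(C_i)$, hence has dimension at most $|C_i| - 1$. Summing, $\dim\polytope(M) \le \sum_i (|C_i| - 1) = n - c$. This half is routine.

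The substantive half is the reverse inequality: a \emph{connected} matroid $M$ on $[n]$ has $\dim\polytope(M) = n-1$; applying this to each factor $\polytope(M|C_i)$ and summing then gives $\dim\polytope(M) \ge \sum_i(|C_i|-1) = n-c$, closing the argument. So the real work is: if $M$ is connected, its polytope is full-dimensional inside the hyperplane $\{\sum x_j = \rank(M)\}$. The standard way I would do this is via the edge structure. The edges of $\polytope(M)$ are precisely the segments $[e_B, e_{B'}]$ where $B, B'$ are bases with $B' = B - i + j$ for some $i \in B$, $j \notin B$ (basis exchange); such an edge has direction vector $e_j - e_i$. Fix a basis $B$. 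It suffices to show that the set of direction vectors $\{e_j - e_i : B - i + j \text{ is a basis}\}$ spans the hyperplane $\{x : \sum x_k = 0\}$, equivalently that the graph $G_B$ on vertex set $[n]$ with an edge $\{i,j\}$ whenever $B-i+j$ is a basis (and the loop/coloop bookkeeping) is connected. This connectivity of the ``basis exchange graph'' relative to a fixed basis is exactly equivalent to connectivity of the matroid $M$ — this is a classical fact (it is essentially how matroid connectivity interacts with the fundamental circuit/cocircuit structure), and I would invoke it, citing the matroid-theory references already in the paper (\cite{Oxley:2011}, \cite{White:1986}), since the lemma itself is quoted from \cite{Fujishige:1984} and \cite{FeichtnerSturmfels:2005}. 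From connectivity of $G_B$ one gets that the vectors $e_j - e_i$ over edges of a spanning tree are $n-1$ linearly independent vectors all lying in $\polytope(M) - e_B$ shifted appropriately, so $\dim\polytope(M) \ge n-1$, and combined with the trivial upper bound $\dim\polytope(M) \le n-1$ we get equality.

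The main obstacle — really the only nontrivial point — is the equivalence between connectivity of the matroid and connectivity of the basis-exchange graph $G_B$ (for any, equivalently every, fixed basis $B$). One direction is easy: if $[n] = S \sqcup ([n]-S)$ witnesses disconnectedness, then no basis exchange can move an element across the partition, so $G_B$ is disconnected. The converse requires showing that if $M$ is connected then for any two elements there is a chain of exchanges linking them through $B$; this is where one uses that in a connected matroid any two elements lie in a common circuit, together with a careful induction peeling off fundamental circuits. Since this is well-known and the enclosing lemma is cited rather than claimed as new, I would state it as a known fact with a citation rather than reprove it. If a self-contained argument were wanted, the cleanest route is to invoke directly that $\polytope(M)$ is the base polytope and that its dimension formula via connected components is part of the standard theory of generalized permutohedra / polymatroids, as in \cite{FeichtnerSturmfels:2005}.
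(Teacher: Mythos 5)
The paper does not prove this lemma at all: it is imported verbatim by citation from Fujishige and Feichtner--Sturmfels, so there is no in-paper argument to compare against. Your sketch is the standard proof of the cited fact and is correct in outline. The upper bound $\dim\polytope(M)\le n-c$ via the product decomposition $\polytope(M)=\polytope(M|C_1)\times\cdots\times\polytope(M|C_c)$ and the affine relations $\sum_{j\in C_i}x_j=\rank_M(C_i)$ is routine, as you say. For the lower bound, note that you do not actually need the (true, but harder) fact that the edges of $\polytope(M)$ are \emph{exactly} the basis-exchange segments; it suffices that every vector $e_j-e_i$ with $B-i+j$ a basis lies in $\polytope(M)-e_B$, and that these vectors span the hyperplane $\smallSetOf{x}{\sum x_k=0}$ when $M$ is connected. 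That spanning statement reduces, as you identify, to connectivity of the exchange graph $G_B$, equivalently to the fact that the fundamental circuits of a fixed basis link any two elements of a connected matroid; this is classical and your handling of loops and coloops (isolated vertices of $G_B$, matching singleton components) is consistent. The only caveat is that this crux is itself invoked by citation rather than proved, so your write-up is a reduction to a standard matroid-theory fact rather than a self-contained proof --- but since the enclosing lemma is itself a cited result in the paper, that is an entirely reasonable level of detail.
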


\begin{example}
   An element $e$ is a loop in a partition matroid $\matroid( \Delta(d_1,C_1)\times\cdots\times(\Delta(d_k,C_k) )$  if and only if $e\in C_\ell$ and $\rank(C_\ell) = d_\ell = 0$.
   The element is a coloop if instead $\rank(C_\ell) = d_\ell = \size(C_\ell)$.
   The other connected components are those sets $C_\ell$ with $0 < d_\ell < \size(C_\ell)$.

   A nested matroid is loop-free if $d_1 > 0$ and coloop-free if $F_k=[n]$.
   A  loop- and coloop-free nested matroid is connected.
\end{example}

At first we consider the common cell $H^\Sigma$ in a $k$-split $\Sigma$ of $\Delta(d,n)$.
\begin{proposition}
   The common cell $H^\Sigma$ is a matroid polytope of a loop and coloop-free partition matroid with $k$ connected components.
\end{proposition}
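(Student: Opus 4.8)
The statement has three parts to establish about the common cell $H^\Sigma$ of a $k$-split $\Sigma$ of $\Delta(d,n)$: that it is a matroid polytope, that the associated matroid is a partition matroid, and that this partition matroid is loop- and coloop-free with exactly $k$ connected components. I would prove these in that order, leveraging Theorem~\ref{thm:main1} and Lemma~\ref{lem:propertiesH} heavily.

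First I would fix a vertex $e_I \in H^\Sigma$ (which exists since $P$ is full-dimensional) and invoke Theorem~\ref{thm:main1}: $\Sigma$ is the Stiefel image of a $k$-split of $\Delta_{d-1,I}\times\Delta_{1,[n]-I} = \fig(e_I)$. Under the Stiefel extension the dual complex is preserved (\cite[Theorem 7]{HerrmannJoswigSpeyer:2014}), so the common cell $H^\Sigma$ of the hypersimplex split restricts on the vertex figure to the common cell of the product-of-simplices split, and moreover $H^\Sigma$ is the cone over that cell with apex $e_I$. By Proposition~\ref{prop:ksplitHer}, $H^\Sigma$ is cut out of $\Delta(d,n)$ by the $k$ facet-defining inequalities coming from the maximal cones of $\cF$; I would show each such inequality is of the form $\sum_{i\in S}x_i \le r$ for a subset $S\subseteq[n]$. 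This uses that the rays of $\cF$ emanating from $e_I$ point toward vertices $e_J$ of the hypersimplex, and that the separating hyperplane between two adjacent maximal cells, intersected with $\fig(e_I)$ and transported back to $\Delta(d,n)$, is a $0/1$-supported linear form. Hence $H^\Sigma = \{x\in\Delta(d,n): \sum_{i\in S_t}x_i = r_t,\ t=1,\dots,k\}$ for suitable sets $S_t$ and values $r_t$ — an intersection of hypersimplex facet hyperplanes, which is exactly a matroid polytope (in fact a transversal/partition-type polytope by the following step).

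Next, to identify the matroid as a partition matroid I would argue that the sets $S_1,\dots,S_k$ (or rather the atoms of the Boolean algebra they generate) form the blocks. The key point: because the dual cell of $H^\Sigma$ is a $k$-simplex (Proposition~\ref{prop:ksplitHer}), the lineality space $\aff H^\Sigma$ has codimension $k-1$, so exactly $k-1$ of the equations $\sum_{i\in S_t}x_i = r_t$ are independent modulo $\aff\Delta(d,n)$; combined with the "all maximal cells meet in the common cell" structure, the arrangement of the $S_t$ is forced to be, up to the global equation $\sum x_i = d$, a partition of $[n]$ into $k$ blocks with prescribed coordinate-sums — giving $H^\Sigma = \Delta(d_1,C_1)\times\cdots\times\Delta(d_k,C_k)$. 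I would verify this by taking the apex/fan description from the paragraph after Proposition~\ref{prop:ksplitHer}: the $k$ rays of $\cF$ together with the lineality space $\aff H^\Sigma$ span $\RR^n$, and the combinatorics of a $k$-simplex-modulo-lineality forces the ray directions (which are differences of hypersimplex vertices $e_J - e_I$) to be "disjointly supported" in exactly the way that makes the common cell a product over a partition.

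Finally, loop-, coloop-freeness and the component count: by Lemma~\ref{lem:propertiesH} the relative interior of $H^\Sigma$ lies in the relative interior of $\Delta(d,n)$, so no coordinate is constantly $0$ or constantly $1$ on $H^\Sigma$ — which by Example (the partition-matroid loop/coloop discussion) is exactly the statement that the partition matroid has no loops and no coloops, i.e.\ $0 < d_\ell < \size(C_\ell)$ for every block. Then by Lemma~\ref{lem:dim}, the number of connected components of $\matroid(H^\Sigma)$ equals $n - \dim H^\Sigma$; since $\dim H^\Sigma = (n-1) - (k-1) = n-k$ (its dimension is $\dim\Delta(d,n)$ minus the codimension $k-1$ of $\aff H^\Sigma$, from Proposition~\ref{prop:ksplitHer}), we get exactly $k$ connected components. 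The main obstacle I anticipate is the middle step: rigorously extracting from "dual cell is a $k$-simplex modulo lineality" the conclusion that the defining inequalities have $0/1$ coefficients and their supports partition $[n]$ — i.e.\ ruling out a common cell that is a matroid polytope of some non-partition (e.g.\ nested but not partition) matroid. I would handle this by going to the vertex figure, where $\fig(e_I)$ is literally a product of two simplices and multi-splits of products of simplices have a clean coordinate description, and then pushing that description back through the Stiefel map.
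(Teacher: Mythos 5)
Your overall skeleton is sound, and three of the four ingredients match the paper's proof exactly: $H^\Sigma$ is a matroid polytope because the multi-split is a matroid subdivision (Theorem~\ref{thm:main1} together with the fact that the Stiefel map produces tropical Pl\"ucker vectors); loop- and coloop-freeness follows from Lemma~\ref{lem:propertiesH}, since a block with $d_\ell=0$ or $d_\ell=\size(C_\ell)$ would confine $H^\Sigma$ to a proper face of $\Delta(d,n)$; and the count of $k$ connected components follows from Lemma~\ref{lem:dim} via $\dim H^\Sigma=(n-1)-(k-1)=n-k$ (your dimension count is the correct one).

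The issue is the middle step, which you yourself flag as the main obstacle: extracting from the fan $\cF$, or from the vertex figure pushed through the Stiefel map, that the defining inequalities of $H^\Sigma$ have $0/1$ coefficients whose supports partition $[n]$. As written this is a genuine gap. You would in effect be re-deriving the exterior description that the paper only obtains much later (Lemma~\ref{lem:singlepoltope}), after introducing the order $\relation$ on components, and the assertion that \enquote{the combinatorics of a $k$-simplex modulo lineality forces the ray directions to be disjointly supported} is a claim, not an argument. The paper sidesteps all of this. Once you know $H^\Sigma=\polytope(M)$ is a matroid polytope, the connected components $C_1,\ldots,C_k$ of $M$ exist for free and give $\polytope(M)=\polytope(M|C_1)\times\cdots\times\polytope(M|C_k)$; hence $\aff H^\Sigma$ lies in the codimension-$k$ affine space $\bigcap_\ell\smallSetOf{x}{\sum_{i\in C_\ell}x_i=d_\ell}$ with $d_\ell=\rank_M(C_\ell)$, and equals it by the dimension count. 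Lemma~\ref{lem:propertiesH} then says $H^\Sigma=\aff H^\Sigma\cap\Delta(d,n)$, i.e.\ there are no further valid inequalities beyond those of $\Delta(d,n)$, which forces $\polytope(M|C_\ell)=\Delta(d_\ell,C_\ell)$ for every $\ell$ --- precisely the definition of a partition matroid. I recommend replacing your fan/Stiefel analysis of the middle step with this short argument; note in particular that no appeal to Theorem~\ref{thm:main1} beyond \enquote{the cells are matroid polytopes} is needed.
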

\begin{proof}
   The common cell $H^\Sigma$ is a cell in a matroid subdivision and hence a matroid polytope.
   The dimension of this polytope is $n-k+1$.
   From Lemma~\ref{lem:dim} follows that the corresponding matroid $M = \matroid(H^\Sigma)$ has $k$ connected components.
   Let $C_1,\ldots,C_k$ be the connected components of $M$ and $d_\ell=\rank_M(C_\ell)$.
   Clearly, this is a partition of the ground set $[n]$ and the sum $d_1+\ldots+d_k$ equals $d$.
   The polytope $H^\Sigma=\polytope(M)$ is the intersection of $\Delta(d,n)$ with an affine space.
   Hence, there are no further restrictions to the polytope and each matroid polytope $\polytope(M|C_\ell)$ is equal to $\Delta(d_\ell,C_\ell)$.
   The common cell $H^\Sigma$ intersects $\Delta(d,n)$ in the interior, hence $0 < d_\ell < \size(C_\ell)$ and the matroid $M$ is loop and coloop-free.
\end{proof}

We define the relation $\relation$ on the connected components $C_1,\ldots,C_k$ of $\matroid(H^\Sigma)$ depending on a cell $P\in\Sigma$ by
\begin{equation}\label{eq:relation}
\begin{aligned}
   C_a \relation C_b &\text{ if and only if for each $v\in H^\Sigma$ and for each $i\in C_a$ and $j\in C_b$ with}\\&\text{$v_i=1$ and $v_j=0$  we have } v+e_j-e_i\in P \enspace .
\end{aligned}
\end{equation}

\begin{lemma}
   Let $C_1,\ldots,C_k$ be the connected components of the matroid $\matroid(H^\Sigma)$.
   The matroid polytope $P$ of a cell in $\Sigma$ defines a partial order on the connected components $C_1,\ldots,C_k$ via $C_a \relation C_b$.
\end{lemma}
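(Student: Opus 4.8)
The claim is that the relation $\relation$ defined in \eqref{eq:relation} is a partial order on the connected components $C_1,\dots,C_k$ of $\matroid(H^\Sigma)$; equivalently, that it is reflexive, antisymmetric, and transitive. Reflexivity is immediate: if $v\in H^\Sigma$, $i,i'\in C_a$ with $v_i=1$, $v_{i'}=0$, then $v+e_{i'}-e_i$ lies in $H^\Sigma=\Delta(d_1,C_1)\times\cdots\times\Delta(d_k,C_k)$ (this uses the preceding proposition describing $H^\Sigma$ as the polytope of a partition matroid), and $H^\Sigma\subseteq P$ since $P\in\Sigma$ is a cell containing the common cell. So $C_a\relation C_a$.

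For antisymmetry, suppose $C_a\relation C_b$ and $C_b\relation C_a$ with $a\neq b$. The plan is to show this forces $P$ to contain a full octahedral (or product-of-simplices) face spanning the directions $C_a$ and $C_b$, contradicting the fact that the maximal cells of the $k$-split are proper — more precisely, that $P$ is separated from some other maximal cell. Concretely, pick $v\in H^\Sigma$ and indices $i\in C_a$, $j\in C_b$ with $v_i=1$, $v_j=0$; then $v+e_j-e_i\in P$ by $C_a\relation C_b$, and starting from $v' = v+e_j-e_i$ (which lies in $H^\Sigma$ because $H^\Sigma$ is the partition-matroid polytope, hence closed under such exchanges within its components — wait, $v'$ need not be in $H^\Sigma$ since it moves mass between components). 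I would instead argue directly at the level of the inducing fan $\cF$: by Theorem~\ref{thm:main1}, $\Sigma$ is the extension under the Stiefel map of a $k$-split of $\fig(e_J) = \Delta_{d-1}\times\Delta_{n-d-1}$ for any $e_J\in H^\Sigma$, and the $k$ maximal cones of $\cF$ are indexed so that cell $P$ corresponds to a union of a sub-collection; the exchange $e_j-e_i$ moving from component $C_a$ to $C_b$ corresponds to crossing from one ``side'' of the multi-split toward another. Having both $C_a\relation C_b$ and $C_b\relation C_a$ would mean $P$ is not separated from the other cells along the $C_a$–$C_b$ directions at all, so $P$ together with every cell reachable by $C_a$–$C_b$ exchanges lies in a common affine slab with $H^\Sigma$ in its interior — but by Lemma~\ref{lem:propertiesH} the relative interior of $H^\Sigma$ already meets the relative interior of $\Delta(d,n)$, and the maximal cell $P$ has full dimension $n-1$, so it would have to coincide with $\Delta(d,n)$, impossible for $k\geq 2$.

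Transitivity is the heart of the matter and, I expect, the main obstacle. Suppose $C_a\relation C_b$ and $C_b\relation C_c$; I want $C_a\relation C_c$. Fix $v\in H^\Sigma$ and $i\in C_a$, $\ell\in C_c$ with $v_i=1$, $v_\ell=0$; I must show $v+e_\ell-e_i\in P$. Since $C_b$ is a connected component of $\matroid(H^\Sigma)$ with $0<d_b<|C_b|$, there exist $j,j'\in C_b$ with $v_j=1$ and $v_{j'}=0$. Then $w := v - e_i + e_j$ is obtained from $v$ by an $C_a$-to-$C_b$ move... but again this leaves $H^\Sigma$. The correct device: use that $H^\Sigma$ being a partition-matroid polytope lets me choose an auxiliary vertex $v^{(b)}\in H^\Sigma$ that is a small perturbation of $v$ inside $H^\Sigma$ exchanging an element of $C_b$, so that the hypotheses of both relations can be applied to points of $H^\Sigma$ simultaneously, and then convexity of $P$ (it is a cell, hence convex) interpolates: $v+e_\ell-e_i$ will be expressed as a convex combination of the points $v+e_j-e_i$, $v+e_\ell-e_{j'}$ (both in $P$) and points of $H^\Sigma$ (in $P$), after translating the index bookkeeping through the common vertex $v$. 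The key identity to verify is that the midpoint-type combination $\tfrac12(v+e_j-e_i) + \tfrac12(v+e_\ell-e_{j'}) + (\text{correction in }H^\Sigma)$ equals $v+e_\ell-e_i$ up to a lattice translate lying in $H^\Sigma$; establishing this cleanly — keeping everything inside $\Delta(d,n)$ and inside $P$ — is where the real work lies. Once transitivity is in hand, combined with reflexivity and antisymmetry, $\relation$ is a partial order, proving the lemma.
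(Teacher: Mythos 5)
Your reflexivity argument matches the paper's. The other two parts have genuine gaps.

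For transitivity, the mechanism you propose --- expressing $v+e_\ell-e_i$ as a convex combination of $v+e_{j'}-e_i$, $v+e_\ell-e_j$ and points of $H^\Sigma$ --- provably cannot work. Apply the linear functional $h(x)=\sum_{m\in C_a}x_m$: it equals $d_a$ on $H^\Sigma$ and on $v+e_\ell-e_j$, and equals $d_a-1$ on the target and on $v+e_{j'}-e_i$; so any convex combination realizing the target must put weight $1$ on $v+e_{j'}-e_i$, forcing $e_\ell=e_{j'}$, a contradiction. Convexity of $P$ alone is not enough; what saves the argument is the conical structure of the cell. The paper works in the cone $Q=\smallSetOf{\lambda x+y}{y\in H^\Sigma,\ x+y\in P,\ \lambda\geq 0}$ of the fan $\cF$ containing $P$, whose lineality space is $\aff H^\Sigma$. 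There one may rescale directions away from $H^\Sigma$: with $w=v-e_k+e_\ell\in H^\Sigma$ (for $k,\ell\in C_2$ with $v_k=1$, $v_\ell=0$) one checks
\[
v-e_i+e_j \ =\ \tfrac{1}{3}\bigl( v+3(e_j-e_k) \,+\, w+4(e_k-e_\ell)\,+\, v+3(e_\ell-e_i)\bigr)\in Q ,
\]
and since $v-e_i+e_j\in\Delta(d,n)$ and $P=Q\cap\Delta(d,n)$, the target lies in $P$. Your instinct that a ``correction in $H^\Sigma$'' is needed is right, but the correction requires leaving the convex hull and using the cone; you never invoke $Q$, so the step as written fails.

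For antisymmetry your argument is too vague to check: the claims that $P$ ``is not separated along the $C_a$--$C_b$ directions'' and hence ``would have to coincide with $\Delta(d,n)$'' are not justified, and the detour through Theorem~\ref{thm:main1} is unnecessary. The paper's argument is elementary: choose $v,w\in H^\Sigma$ with $v_i=w_j=1$ and $v_j=w_i=0$ (possible since the components are loop- and coloop-free); then $v-e_i+e_j$ and $w+e_i-e_j$ both lie in $P$, their midpoint is $\tfrac12(v+w)\in H^\Sigma$, and since $H^\Sigma$ is a face of $P$ a convex combination of points of $P$ lies in $H^\Sigma$ only if all of them do. Hence $v-e_i+e_j\in H^\Sigma$, which by the reflexivity/basis-exchange argument forces $C_1=C_2$. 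You abandoned exactly this midpoint idea midway (``wait, $v'$ need not be in $H^\Sigma$''), but the point is not that the exchanged vertex lies in $H^\Sigma$ --- it is that its average with the oppositely exchanged vertex does.
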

\begin{proof} Let $i,j\in[n]$ and $v\in H^\Sigma$ be a vertex with $v_i=1$, $v_j=0$.
   Then $v-e_i+e_j\in H^\Sigma$ if and only if there is a circuit in $\matroid(H^\Sigma)$ containing both $i$ and $j$.
 The vector $v$ is the characteristic vector of a basis in $\matroid(H^\Sigma)$ and adding $e_j-e_i$ corresponds to a basis exchange. This implies that $i$ and $j$ are in the same connected component, i.e., $\relation$ is reflexiv.

   Let $C_1 \relation C_2 \relation C_1$ and $i\in C_1$, $j \in C_2$.
   Take $v,w\in H^\Sigma$ with $v_i=w_j=1$ and $v_j=w_i=0$.
   By assumption we have $v-e_i+e_j, w+e_i-e_j\in P$ and since $H^\Sigma$ is convex
   \[
      \frac{1}{2}(v-e_i+e_j) +  \frac{1}{2}(w+e_i-e_j) = \frac{1}{2} v + \frac{1}{2} w \in H^\Sigma \enspace .
   \]
   A convex combination of points in $P$ lies in $H^\Sigma$ if and only if all the points are in $H^\Sigma$.
   Hence, we got $v-e_i+e_j\in H^\Sigma$ and therefore $C_1 = C_2$.

   Let $C_1 \relation C_2 \relation C_3$, $i\in C_1$,  $j\in C_3$ and $v\in H^\Sigma$ with $v_i=1$ and $v_j=0$.
   Consider the cone $Q = \smallSetOf{\lambda x + y }{y\in H^\Sigma, x+y \in P \text{ and } \lambda \geq 0  }$.
   This is the cone in the fan $\cF$ that contains $P$ with the same dimension as $P$.
   Let $k,\ell\in C_2$ be indices with $v_k = 1$ and $v_\ell = 0$ and $w=v-e_k+e_\ell$.
   Then $v,w\in H^\Sigma$ and $v-e_k+e_j,\, w+e_k-e_\ell,\, v-e_i+e_\ell\,\in P$.
   That implies
   \[
      v-e_i+e_j \ =\ \frac{1}{3}\left( v+3(e_j-e_k) + w+4(e_k-e_\ell)+ v+3(e_\ell-e_i)\right) \in Q \enspace .
   \]
   Clearly $v-e_i+e_j\in \Delta(d,n)$ and hence $v-e_i+e_j\in P$.
   This shows that $\relation$ is transitive.
\end{proof}
   
Before we further investigate the relation $\relation$ we take a look at rays of $\cF$.
The next Lemma describes the $(n-k+2)$-dimensional cells in $\Sigma$.
The $k$-split $\Sigma$ has exactly $k$ of these cells and each maximal cell contains $k-1$ of those; see Proposition~\ref{prop:ksplitHer}.
\begin{lemma}\label{lem:rays}
   For each $(n-k+2)$-dimensional cell of $\Sigma$ there are $a,b\in[n]$ such that the cell equals
   \[
      R_{a,b}\ = \ \left( H^\Sigma + \SetOf{\mu(e_a-e_b)}{\mu\geq 0} \right)\cap \Delta(d,n)  \enspace .
   \]
\end{lemma}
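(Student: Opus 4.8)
The plan is to use the structure of the fan $\cF$ inducing $\Sigma$ together with Proposition~\ref{prop:induced_subdivision} and Theorem~\ref{thm:main1}. First I would pick a vertex $v\in H^\Sigma$ of $P=\Delta(d,n)$ (one exists because the relative interior of $H^\Sigma$ meets the relative interior of $\Delta(d,n)$, by Lemma~\ref{lem:propertiesH}) and take $v$ as the apex of the complete fan $\cF$. By Proposition~\ref{prop:ksplitHer} the fan has exactly $k$ rays, each of the form $\smallSetOf{v+\mu(w-v)}{\mu\ge 0}+\aff H^\Sigma$ for some vertex $w$ of $\Delta(d,n)$, and the $(n-k+2)$-dimensional cells of $\Sigma$ are exactly the cells dual to these rays, i.e., the intersections $R=\big(\aff H^\Sigma + \smallSetOf{\mu(w-v)}{\mu\ge 0}\big)\cap\Delta(d,n)$ for the $k$ vertices $w$ appearing in rays. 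So the task reduces to showing that each such $w$ differs from $v$ in exactly one coordinate where $v$ is $1$ and one where $v$ is $0$; equivalently $w-v=e_a-e_b$ for some $a,b\in[n]$, which then gives $R=R_{a,b}$ since $\aff H^\Sigma+\smallSetOf{\mu(e_a-e_b)}{\mu\ge 0}$ already contains $H^\Sigma$ and adding the ray direction $e_a-e_b$ to $H^\Sigma$ (rather than to $\aff H^\Sigma$) yields the same intersection with $\Delta(d,n)$, because $H^\Sigma=\aff H^\Sigma\cap\Delta(d,n)$ by Lemma~\ref{lem:propertiesH} and the cone direction points inward from the relative interior.

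The key step is the claim that the vertex $w$ at the tip of a ray is a \emph{neighbour} of $v$ in the vertex-edge graph of $\Delta(d,n)$, hence $w=v-e_b+e_a$ with $v_a=0$, $v_b=1$. For this I would invoke Theorem~\ref{thm:main1}: $\Sigma$ is the Stiefel image of a $k$-split $\Sigma_0$ of the product of simplices $\fig(v)=\Delta(d-1,I)\times\Delta(1,[n]-I)$, where $I=\{i:v_i=1\}$ and $v=e_I$. The Stiefel lift is obtained by coning over the cells of $\Sigma_0$ with apex $v=e_I$, so the rays of $\cF$ are precisely the cones over the \emph{vertices} of $\Sigma_0$ that lie on rays of the fan inducing $\Sigma_0$ — and the vertices of $\fig(v)$ are exactly the points $(e_i,e_j)$ with $i\in I$, $j\in[n]-I$, which in the ambient hypersimplex coordinates are the points $e_I-e_i+e_j=v-e_i+e_j$. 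Thus the tip $w$ of each ray of $\cF$ is of the form $v-e_i+e_j$, i.e., a neighbour of $v$; setting $a=j$, $b=i$ gives the desired form. Finally I would note that each $(n-k+2)$-dimensional cell of $\Sigma$ is dual to a ray (again Proposition~\ref{prop:ksplitHer}), so every such cell is some $R_{a,b}$.

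The main obstacle I anticipate is making the identification "rays of $\cF$ $\leftrightarrow$ cones over vertices of $\Sigma_0$" fully rigorous, i.e., checking that the intersection of a ray of $\cF$ with $\fig(v)$ is a vertex of the induced subdivision $\Sigma_0$ and, conversely, that every ray of $\Sigma_0$'s fan lifts to a ray of $\cF$. This is exactly the content of Proposition~\ref{prop:induced_subdivision} (each cone of $\cF$ meets $\fig(v)$, and a $1$-dimensional cone modulo the lineality $\aff H^\Sigma$ meets $\fig(v)$ in a $0$-dimensional cell) together with the fact that the Stiefel extension cones over cells with apex $e_I$; once these are in hand the coordinate computation $w-v=e_a-e_b$ is immediate from the description of the vertices of $\fig(v)$ in Example~\ref{ex:vertex_fig}. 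A secondary, more routine point is verifying the set equality $R_{a,b}=\big(\aff H^\Sigma+\smallSetOf{\mu(e_a-e_b)}{\mu\ge0}\big)\cap\Delta(d,n)$, which follows because $H^\Sigma\subseteq\aff H^\Sigma$, the direction $e_a-e_b$ is not parallel to $\aff H^\Sigma$ (it corresponds to a genuine ray), and points of $\aff H^\Sigma\setminus H^\Sigma$ lie outside $\Delta(d,n)$ so they contribute nothing to the intersection after adding the one-sided ray.
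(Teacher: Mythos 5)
Your argument is correct in outline, but it takes a genuinely different route from the paper. The paper's proof is three lines and works directly inside the cell $R$: since $\Sigma$ is a matroid subdivision (a consequence of Theorem~\ref{thm:main1} together with the Stiefel-map proposition), the cell $R$ is a matroid polytope, so every edge of $R$ at a vertex $v\notin H^\Sigma$ is parallel to some $e_a-e_b$; at least one such edge connects $v$ to the face $H^\Sigma$ of $R$, and this already forces $R=R_{a,b}$. You instead re-run the Section~2 machinery: apex at a vertex $v$ of $H^\Sigma$, rays of $\cF$ of the form $\aff H^\Sigma+\pos(w-v)$, and Lemma~\ref{lem:negible_points} to conclude that the tip of each ray may be taken to be a vertex of $\fig(v)$, i.e.\ a neighbour $v-e_b+e_a$ of $v$. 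Both routes ultimately rest on Theorem~\ref{thm:main1}; yours buys an explicit identification of the ray directions with edges at a vertex of the common cell (conceptually close to how the relation $\relation$ is exploited afterwards), while the paper's buys brevity by using the edge-direction characterization of matroid polytopes. Two points in your write-up need tightening. First, a ray of $\cF$ meets $\fig(v)$ in a cell of dimension $\dim H^\Sigma$ (a \enquote{ray cell} of the induced split), not in a $0$-dimensional cell; you should pass to a vertex of that cell lying outside the induced common cell and only then invoke Lemma~\ref{lem:negible_points} to see it is a $0/1$-point, hence a vertex of the product of simplices. Second, the final reduction from $\aff H^\Sigma$ to $H^\Sigma$ does not follow from \enquote{points of $\aff H^\Sigma\setminus H^\Sigma$ contribute nothing}, since translating such a point along $e_a-e_b$ can re-enter $\Delta(d,n)$; it is cleaner to verify the equality on the $0/1$-vertices of $R$ (each vertex $u\notin H^\Sigma$ satisfies $u-e_a+e_b\in\aff H^\Sigma\cap[0,1]^n=H^\Sigma$), which is essentially what the paper's edge argument does.
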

\begin{proof}
   Let $v$ be a vertex of the $(n-k+2)$-dimensional cell $R$ that is not in $H^\Sigma$.
   This cell $R$ is a matroid polytope.
   Hence, there is an edge of $v$ that has the direction $e_i-e_j$ for some $i$ and $j$.
   At least one of those edges connects $v$ with $H^\Sigma$.
   Therefore $R$ is  of the desired form.
\end{proof}

Now we are able to further investigate $\relation$ and hence the cells in the $k$-split $\Sigma$.
\begin{lemma}
   For a connected matroid $\matroid(P)$ the relation $\relation$ is a total ordering on the connected components of $\matroid(H^\Sigma)$.
\end{lemma}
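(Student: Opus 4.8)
The plan is to reinterpret the relation $\relation$ as a single containment of a vector in the maximal cone of $\cF$ that carries $P$, and then to exploit the simplex structure of $\cF$ coming from Proposition~\ref{prop:ksplitHer} together with the description of its rays in Lemma~\ref{lem:rays}. First I would record that, by Lemma~\ref{lem:dim}, $\matroid(P)$ is connected exactly when $\dim P=n-1$, i.e.\ when $P$ is a maximal cell of $\Sigma$; equivalently the cone $Q_P$ of $\cF$ with $Q_P\cap\Delta(d,n)=P$ is a maximal cone of $\cF$ (using that the apex of $\cF$ lies in $\relint\Delta(d,n)$ by Lemma~\ref{lem:propertiesH}). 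Let $L$ be the lineality space of $\cF$, the linear space parallel to $\aff H^\Sigma$. Every difference $e_i-e_{i'}$ with $i,i'$ in a common connected component $C_\ell$ of $\matroid(H^\Sigma)$ lies in $L$, since translating by it preserves all the equations $\sum_{t\in C_\ell}x_t=d_\ell$ cutting out $\aff H^\Sigma$. Hence for $a\neq b$ the class of $e_j-e_i$ in $\RR^n/L$ is independent of $i\in C_a$, $j\in C_b$; denote it $u_{ab}$, and note $u_{ba}=-u_{ab}$ and $u_{ab}+u_{bc}=u_{ac}$.

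The reformulation I would then prove is: $C_a\relation C_b$ if and only if $u_{ab}$ lies in $\overline{Q_P}$, the image of $Q_P$ in $\RR^n/L$. Since $0<d_a<\size(C_a)$ and $0<d_b<\size(C_b)$, for any prescribed $i\in C_a$, $j\in C_b$ there is a vertex $v\in H^\Sigma$ with $v_i=1$ and $v_j=0$; as $v+e_j-e_i$ is always a vertex of $\Delta(d,n)$, the condition $v+e_j-e_i\in P$ is the same as $v+e_j-e_i\in Q_P$, and since $v$ differs from the apex of $\cF$ by a vector in $L$, which is the lineality space of $Q_P$, this is equivalent to $u_{ab}\in\overline{Q_P}$. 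In particular the quantifier ``for each $v\in H^\Sigma$'' in \eqref{eq:relation} is automatically satisfied once it holds for one choice, and since $\overline{Q_P}$ is a pointed convex cone this reformulation already recovers the preceding lemma.

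Next I would set up the combinatorial model. Fixing a base component $C_1$ and putting $w_1=0$, $w_\ell=u_{1\ell}$ for $\ell\geq2$, one has $u_{ab}=w_b-w_a$, and a short computation with the equations of $L$ (evaluate a relation $\sum_{\ell\geq2}c_\ell w_\ell=0$ against $\sum_{t\in C_m}x_t$) shows that $w_2,\dots,w_k$ are linearly independent in $\RR^n/L\cong\RR^{k-1}$. Thus $w_1,\dots,w_k$ are the vertices of a $(k-1)$-simplex and the vectors $u_{ab}$ are exactly its edge directions. By Proposition~\ref{prop:ksplitHer} the quotient fan $\cF/L$ is the normal fan of a simplex with $k$ vertices, hence a complete pointed simplicial fan in $\RR^{k-1}$ with $k$ rays $\rho_1,\dots,\rho_k$ such that every $(k-1)$ of them are linearly independent and span a maximal cone; by Lemma~\ref{lem:rays} each $\rho_\ell$ is one of the $u_{ab}$, so the rays may be read as arcs of the complete digraph on $\{C_1,\dots,C_k\}$.

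The crux, and the step I expect to cost the most, is the purely combinatorial claim that these $k$ arcs form a single directed Hamiltonian cycle on the components. Completeness of $\cF/L$ yields a strictly positive dependence $\sum_\ell t_\ell\rho_\ell=0$; substituting $\rho_\ell=w_{b_\ell}-w_{a_\ell}$ and using the affine independence of the $w_\ell$ turns this into the statement that the arcs $a_\ell\to b_\ell$ support a positive circulation, so their union is a vertex-disjoint union of directed cycles, which—having $k$ arcs on $k$ vertices—covers all components. Were there two or more cycles, the arcs of one of them would form a proper subcollection of at most $k-1$ rays summing to $0$, contradicting linear independence of $(k-1)$-subsets of rays. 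So the arcs form one Hamiltonian cycle, which after relabelling I may take to be $C_1\to C_2\to\cdots\to C_k\to C_1$, so $\rho_\ell=w_{\ell+1}-w_\ell$ with indices mod $k$. The cone $\overline{Q_P}$ is spanned by the $k-1$ rays remaining after deleting one arc, say $C_m\to C_{m+1}$, which leaves the Hamiltonian path $C_{m+1}\to C_{m+2}\to\cdots\to C_k\to C_1\to\cdots\to C_m$; expanding $u_{ab}=w_b-w_a$ in this basis gives the telescoping sum of the $\rho_\ell$ along the path between $C_a$ and $C_b$, which has nonnegative coordinates exactly when $C_a$ weakly precedes $C_b$ on the path and a strictly negative coordinate otherwise. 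Hence $C_a\relation C_b$ holds precisely when $C_a$ precedes $C_b$ in the linear order $C_{m+1},C_{m+2},\dots,C_k,C_1,\dots,C_m$, the desired total order. Along the way I would be careful to justify that ``$\matroid(P)$ connected'' is genuinely equivalent to ``$P$ is a maximal cell'' and that maximal cells of $\Sigma$ correspond to maximal cones of $\cF$.
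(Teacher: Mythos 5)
Your proof is correct, but it takes a genuinely different route from the paper's. The paper argues by contradiction from matroid connectivity: assuming two incomparable components $C_1,C_2$, it picks a circuit of $\matroid(P)$ through elements of both, passes to a minor to reduce to the case where the down-sets $F=\bigcup_{C\relation C_1}C$ and $G=\bigcup_{C\relation C_2}C$ are disjoint (invoking Herrmann's result that induced subdivisions on faces of a multi-split are again multi-splits), and then plays the valid inequalities $\sum_{i\in F}x_i\le\rank(F)$ and $\sum_{i\in G}x_i\le\rank(G)$ against a vertex on the unique ray of $\cF$ not contained in $\polytope(M)$. You instead use connectivity only once, via Lemma~\ref{lem:dim}, to identify $P$ as a maximal cell, and then work entirely with the polyhedral combinatorics of $\cF$ modulo its lineality space: the reformulation $C_a\relation C_b$ iff $u_{ab}\in\overline{Q_P}$ is right (and cleanly re-proves the preceding partial-order lemma from pointedness and convexity of $\overline{Q_P}$), the identification of the $k$ rays with arcs of a digraph on the components is justified by Lemma~\ref{lem:rays}, and your Hamiltonian-cycle step is sound: any proper directed sub-cycle would yield a vanishing positive combination of at most $k-1$ rays, contradicting their linear independence in the normal fan of a simplex. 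What your approach buys is more than the lemma -- the Hamiltonian cycle together with the ``delete one arc'' description of the maximal cones is essentially the cyclic-permutation statement of Theorem~\ref{thm:main2}, which the paper only obtains later by a separate facet-by-facet argument; what it costs is reliance on the full strength of Proposition~\ref{prop:ksplitHer} (simpliciality and completeness of $\cF/L$), whereas the paper's argument leans on matroid structure instead. One small blemish: $\RR^n/L$ has dimension $k$, not $k-1$; all your vectors lie in the sum-zero hyperplane, whose quotient by $L$ is the $\RR^{k-1}$ you actually work in, so the slip is harmless but worth fixing.
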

\begin{proof}
   Let us assume that $C_1$ and $C_2$ are two incomparable connected components of $M(H^\Sigma)$.
   We define
   \[
      F = \bigcup_{C \relation C_1} C \text{ and } G = \bigcup_{C \relation C_2} C \enspace .
   \]
   Pick $i\in C_1$ and $j\in C_2$. The matroid $M = \matroid(P)$ is connected hence there is a circuit $A$ containing both $i$ and $j$. 
   The set $A\cap F\cap G$ is independent in $M$, as $i\not\in G$.
   Let $S\supseteq A\cap F\cap G$ be a maximal independent set in $F\cap G$.
   Let $N$ be the connected component of $i$ in the minor $(M/S)|([n]-F\cap G)$.
   Note that the elements of $F\cap G-S$ are exactly the loops in the contraction $M/S$.
   Moreover, $A-S$ is a circuit in $M/S$, and hence is $j\in N$.
   We conclude that $C_1,C_2\subset N$.

   The equation $\sum_{\ell\in N} x_\ell = \rank(N)$ defines a face of $\polytope(M)$.
   This face is contained in $\polytope(N)\times\Delta(d-\rank(N),[n]-N)\subsetneq\Delta(\rank(N),N)\times\Delta(d-\rank(N),[n]-N)$.
   
   \cite[Proposition 4.8]{Herrmann:2011} states that the induced subdivision on a face of a $k$-split is either trivial or a multi-split with less than $k$ maximal cells.
   We are in the latter case, as the induced subdivision on $\Delta(\rank(N),N)$ is not trivial, since $C_1$ and $C_2$ are contained in $N$.
   
   Hence, we can assume without loss of generality that $F\cap G = \emptyset$.
   Clearly, the following two inequalities are valid for $\polytope(M)$ and the face that they define includes $H^\Sigma$
   \begin{align*}
      \sum_{i\in F} x_i \leq \rank(F) \text{ and } \sum_{i\in G} x_i \leq \rank(G) \enspace .
   \end{align*}
   Let $R$ be the unique ray in $\Sigma$ that is not contained in $\polytope(M)$.
   There is a vertex $v\not\in H^\Sigma$ of $\Delta(d,n)$ that is contained in both $R$ and in $H^\Sigma-e_a+e_b$ for some $a,b\in[n]$. The rays in $\Sigma$ positively span the complete space. Hence, we get the estimation
   \begin{align*}
      \rank(F)+1 \geq \sum_{i\in F} v_i > \rank(F) \text{ and } \rank(G)+1 \geq \sum_{i\in G} v_i > \rank(G) \enspace .
   \end{align*}
   This implies that $b\in F\cap G$. We conclude that either $F \relation G$ or  $G \relation F$.
\end{proof}

\begin{example}\label{ex:2split}
   Consider the octahedron $\Delta(2,4)$.
   The hyperplane $x_1+x_2=x_3+x_4$ through the four vertices $e_1+e_3$, $e_2+e_3$, $e_1+e_4$ and $e_2+e_4$   
   strictly separates the vertices $e_1+e_2$ and $e_3+e_4$.
   Moreover the hyperplane splits $\Delta(2,4)$ into two maximal cells, the corresponding subdivision $\Sigma$ is a $2$-split. The partition matroid $\matroid(H^\Sigma)$ has four bases and two connected components $C_1 = \{1,2\}$ and $C_2 = \{3,4\}$.

   Let $M$ be the $(2,4)$-matroid with the following five bases $\{1,3\},\{1,4\},\{2,3\},\{2,4\},\{3,4\}$.
   The polytope $\polytope(M)$ is an egyptian pyramid and a maximal cell in $\Sigma$.
   The inequality $x_1+x_2\leq 1$ is valid for $\polytope(M)$ and hence $C_2\not\relation C_1$.
   It is easy to verify that $C_1\relation C_2$ as $e_3+e_4\in \polytope(M)$.
\end{example}

We derive the following description for the maximal cells of a $k$-split, which we already saw in Example~\ref{ex:2split}.
\begin{lemma} \label{lem:singlepoltope}
   Let $P$ be a maximal cell of the $k$-split $\Sigma$ of $\Delta(d,n)$.
   Furthermore, let $C_1\relation\ldots\relation C_k$ be the order of the connected components of $M(H^\Sigma)$.
   Then $x\in P\subsetneq \RR^n$ if and only if $x\in\Delta(d,n)$ with
   \begin{align} \label{eq:singlepoltope}
      \sum_{\ell=1}^h \sum_{i\in C_\ell} x_i &\leq \sum_{\ell=1}^h \rank_M(C_\ell) \text{ for } h\leq k \enspace .
   \end{align}
\end{lemma}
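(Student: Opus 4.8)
The statement describes the maximal cell $P$ as exactly the subpolytope of $\Delta(d,n)$ cut out by the $k$ "initial-segment" inequalities \eqref{eq:singlepoltope}. The plan is to prove the two inclusions separately: first that $P$ satisfies all these inequalities (so $P$ is contained in the polytope $P'$ they define together with $\Delta(d,n)$), and then that every vertex of $P'$ is a vertex of $P$, using the $k$-split structure and the relation $\relation$.

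\emph{First I would show the inequalities are valid on $P$.} Fix $h\le k$, set $F_h=C_1\cup\cdots\cup C_h$, and consider the linear functional $\varphi_h(x)=\sum_{i\in F_h}x_i$. On $H^\Sigma$ the functional $\varphi_h$ is constant, equal to $\rho_h:=\sum_{\ell=1}^h\rank_M(C_\ell)$, because $H^\Sigma=\prod_\ell\Delta(d_\ell,C_\ell)$ with $d_\ell=\rank_M(C_\ell)$. The cone $Q$ of $\cF$ containing $P$ is $\smallSetOf{\lambda x+y}{y\in H^\Sigma,\ x+y\in P,\ \lambda\ge 0}$ (the notation used in the proof of transitivity of $\relation$); since $P=Q\cap\Delta(d,n)$, it suffices to bound $\varphi_h$ on the generating edge directions $e_a-e_b$ of $Q$, i.e. on the rays $R_{a,b}$ of Lemma~\ref{lem:rays} that lie in $P$. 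On such a ray $\varphi_h$ increases iff $a\in F_h$ and $b\notin F_h$; I must rule this out. If $a\in C_{\ell_a}$ with $\ell_a\le h$ and $b\in C_{\ell_b}$ with $\ell_b>h$, then $R_{a,b}\subseteq P$ means, in particular, $v+e_b-e_a\in P$ for every suitable $v\in H^\Sigma$, which by definition \eqref{eq:relation} of the relation gives $C_{\ell_b}\relation C_{\ell_a}$ — contradicting $\ell_b>\ell_a$ in the total order $C_1\relation\cdots\relation C_k$ (here I use the previous lemma that $\relation$ is a total order on a connected maximal cell; if $\matroid(P)$ is disconnected one reduces to its connected components, or one notes directly that a disconnected maximal cell splits as a product and the claim follows componentwise). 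Hence $\varphi_h$ is non-increasing along every ray of $Q$, so $\varphi_h\le\rho_h$ on $Q$, and a fortiori on $P$.

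\emph{Then I would prove the reverse inclusion.} Let $P'=\smallSetOf{x\in\Delta(d,n)}{\varphi_h(x)\le\rho_h\text{ for }h\le k}$; we have just shown $P\subseteq P'$. For the converse I count and match vertices. Every vertex of $P'$ is a $0/1$-vector $e_S$ for some $d$-set $S\subseteq[n]$ satisfying $\size(S\cap F_h)\le\rho_h$ for all $h$, and conversely each such $e_S$ is a vertex of $P'$. So it is enough to show each such $e_S$ lies in $P$. Since $\Sigma$ is a $k$-split with common cell $H^\Sigma$ and maximal cells $P^{(1)},\dots,P^{(k)}$, and every vertex of $\Delta(d,n)$ lies in some maximal cell, $e_S$ lies in some $P^{(m)}$; by the first part each $P^{(m)}$ is described by its own chain of inequalities associated to a total order of the $C_\ell$, and the $k$ maximal cells together cover all vertices of $\Delta(d,n)$. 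One shows that the $k$ vertex-sets so described are pairwise disjoint away from $H^\Sigma$ and exhaust $\Delta(d,n)$, which (together with Proposition~\ref{prop:ksplitHer}, giving exactly $k$ maximal cells and the correct face counts) forces the order attached to $P$ to be precisely $C_1\relation\cdots\relation C_k$ and forces $P$'s vertex set to be exactly $\{e_S: \size(S\cap F_h)\le\rho_h\ \forall h\}$. Since a matroid polytope is the convex hull of its $0/1$-vertices, $P=\conv\{e_S\}=P'$.

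\emph{Main obstacle.} The delicate point is the reverse inclusion: showing that the chain inequalities attached to $P$ via the \emph{fixed} total order $C_1\relation\cdots\relation C_k$ are not merely necessary but \emph{sufficient}, i.e. that no vertex of $\Delta(d,n)$ satisfying them slips into a different maximal cell. I expect to handle this by the covering/disjointness argument sketched above — using that the common cell is the intersection of all maximal cells and that Proposition~\ref{prop:ksplitHer} pins down the combinatorics of a $k$-split to that of a $k$-simplex, so the $k$ "chambers" cut out by cyclically shifting which component is ``on top'' partition the vertices — rather than by any direct matroid computation. An alternative, possibly cleaner route is to observe that the polytope $P'$ defined by one chain of initial-segment inequalities is exactly the nested-matroid polytope of Example~\ref{ex:nested} (with $F_\ell$ the initial unions and ranks $\rho_\ell$), hence has the right dimension $n$, and then to invoke that $P$ and $P'$ are both full-dimensional matroid polytopes with $P\subseteq P'$ and with matching facet-defining inequalities, so they coincide; this trades the vertex count for a facet comparison.
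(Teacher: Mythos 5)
Your first inclusion is essentially the paper's: the inequalities are checked on the rays $R_{a,b}$ of Lemma~\ref{lem:rays} via the definition of $\relation$ and then extended to all of $P$ by positive combinations, so that half is fine.

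The reverse inclusion is where your plan breaks. The covering/disjointness route rests on the claim that the vertex sets of the $k$ maximal cells (equivalently, of the $k$ chain polytopes) are pairwise disjoint away from $H^\Sigma$, and this is false for $k\geq 3$: in a $k$-split the maximal cells pairwise share codimension-one walls, and these walls strictly contain the codimension-$(k-1)$ cell $H^\Sigma$, so they have vertices outside $H^\Sigma$ lying in two maximal cells at once. Concretely, in the $4$-split of $\Delta(4,8)$ with $C_\ell=\{2\ell-1,2\ell\}$ and $\rank(C_\ell)=1$, the vertex $e_{\{1,2,5,6\}}$ has deviation vector $(+1,-1,+1,-1)$ and satisfies the chain inequalities for the cyclic orders beginning at $C_2$ and at $C_4$ simultaneously. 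So the step ``$v\in V(P')$ and $v\in V(P^{(m')})$ forces $m'=m$'' fails. There is also a circularity issue: knowing that the orders attached to the other maximal cells are exactly the cyclic shifts is the content of Theorem~\ref{thm:main2}, whose proof in the paper uses this very lemma. Your alternative route (b) asserts ``matching facet-defining inequalities,'' which is precisely what has to be proved; $P\subseteq P'$ with both full-dimensional does not give equality without showing each chain hyperplane actually supports a facet of $P$. The paper instead argues directly and locally: the chain system is totally unimodular, so (by \cite[Theorem 19.3]{Schrijver:1986}) every vertex of $P'$ is a vertex of $\Delta(d,n)$; then for a valid vertex $v\notin H^\Sigma$ one locates the first deficient component $C_a$ and the first surplus component $C_b$, checks $a<b$, and uses a basis-exchange step together with $C_a\relation C_b$ to reduce to a point already known to lie in $P$. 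You would need to replace your covering argument by some such vertex-by-vertex exchange (or genuinely carry out the facet analysis) to close the gap.
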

\begin{proof}
   First, we will show that each $x\in P$ fulfills the inequalities (\ref{eq:singlepoltope}).
   The following equation holds for each $v\in H^\Sigma\subsetneq P$ 
   \[
      \sum_{i\in C_\ell} v_i \ =\  \rank_M(C_\ell) \enspace .
   \]
   Lemma~\ref{lem:rays} shows that a ray of $\cF$ is of the form $H^\Sigma + \pos(e_j-e_i)$ for some $i,j\in [n]$.
   Clearly, for each pair $(i,j)$ of such elements and every point 
   $v\in H^\Sigma$ with coordinates $v_j=0$ and $v_i=1$ we get $v+e_j-e_i\in \Delta(d,n)-H^\Sigma$.

   Hence, $v+e_j-e_i\in P$ implies that $C_a\relation C_b$ for $i\in C_a$ and $j\in C_b$. This is $a\leq b$.
   This proves (\ref{eq:singlepoltope}) for all points that are in a ray and in $P$.
   Each point $x\in P$ is a positive combination of vectors in rays of the fan $\cF$, hence the inequalities(\ref{eq:singlepoltope}) are valid for all vectors in $P$.

   Conversely, we will show that each point in $\Delta(d,n)$, that is valid for (\ref{eq:singlepoltope}), is already in $P$.
   The left hand side of (\ref{eq:singlepoltope}) is a totally unimodular system, i.e., all square minors are either $-1$, $0$ or $1$.
   Hence all the vertices of the polytope are integral, even if we add the constraints $0 \leq x_i \leq 1$.
   This is precisely a statement of \cite[Theorem 19.3]{Schrijver:1986}.

   Take a vertex $v$ of $\Delta(d,n)$ that is valid.
   Either $v\in H^\Sigma$ and hence $v\in P$ or at least an inequality of (\ref{eq:singlepoltope}) is strict.
   In this case let $a = \min\smallSetOf{\ell\in[n]}{ \sum_{i\in C_\ell} v_i <  \rank_M(C_\ell) }$ and  $b = \min\smallSetOf{\ell\in[n]}{ \sum_{i\in C_\ell} v_i >  \rank_M(C_\ell) }$.
   Note that both sides of the inequality (\ref{eq:singlepoltope}) for $h=k$ sum up to $d$.
   Hence, both of the minima exist and $a<b$, otherwise the inequality (\ref{eq:singlepoltope}) with $h=a$ would be invalid.
   Pick $i\in C_b$ with $v_i=0$ and $j\in C_a$ with $v_j=1$.
   The vector $w = v-e_j+e_i$ is another vertex of $\Delta(d,n)$, that is valid for (\ref{eq:singlepoltope}).
   Moreover, $w\in P$ implies that $v\in P$ since $C_a\relation C_b$.
   We conclude that $P$ has the desired exterior description.
\end{proof}

Now we are able to state our second main result, which allows us to construct all $k$-splits of the hypersimplex explicitly and relate them to nested matroids.
\begin{theorem}\label{thm:main2}
   A maximal cell in any $k$-split $\Sigma$ of $\Delta(d,n)$ is the matroid polytope $P(M)$ of a connected nested matroid $M$.

   More precisely, the cyclic flats of $M$ are the $k+1$ sets $\emptyset\subsetneq C_1 \subsetneq C_1\cup C_2  \subsetneq \ldots \subsetneq \bigcup_{i=1}^k C_i=[n]$, where $C_1\relation \ldots \relation C_k$ are the connected components of $M(H^\Sigma)$.

   Moreover, the other $k$ maximal cells are given by a cyclic permutation of the sets $C_1,C_2\ldots,C_k$.
   In particular, each maximal cell in a multi-split of $\Delta(d,n)$ determinates all the cells.
\end{theorem}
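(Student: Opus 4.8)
The statement to prove is Theorem~\ref{thm:main2}. Most of its content has already been set up: Lemma~\ref{lem:singlepoltope} gives the explicit exterior description of an arbitrary maximal cell $P$ via the inequalities~\eqref{eq:singlepoltope}, indexed by a total order $C_1 \relation \ldots \relation C_k$ of the connected components of $\matroid(H^\Sigma)$. So the plan is to read off the matroid from this description and then account for the remaining maximal cells.

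\smallskip

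First I would identify $P$ as a nested matroid polytope. Set $F_h = \bigcup_{\ell=1}^h C_\ell$ and $r_h = \sum_{\ell=1}^h \rank_M(C_\ell)$ for $h = 1,\ldots,k$. By Lemma~\ref{lem:singlepoltope}, $P$ is exactly the set of $x \in \Delta(d,n)$ with $\sum_{i \in F_h} x_i \leq r_h$. Since $\emptyset \subsetneq F_1 \subsetneq \ldots \subsetneq F_k = [n]$ is a chain and $0 \leq r_1 < r_2 < \ldots < r_k$ are strictly increasing integers (strict because each $\rank_M(C_\ell) > 0$, as $\matroid(H^\Sigma)$ is loop-free by the earlier Proposition), with $r_h < \size(F_h)$ (each $\rank_M(C_\ell) < \size(C_\ell)$ since $\matroid(H^\Sigma)$ is coloop-free), this is precisely the form in Example~\ref{ex:nested}. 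Hence $P = P(M)$ for a nested matroid $M$ whose cyclic flats are $\emptyset \subsetneq F_1 \subsetneq \ldots \subsetneq F_k = [n]$ (using $r_1 = \rank_M(C_1) \neq 0$, so by Example~\ref{ex:nested} we must also include $\emptyset$, giving the $k+1$ sets claimed). Connectedness follows from the loop- and coloop-free criterion recalled in the example immediately after Lemma~\ref{lem:dim}: $d_1 = \rank_M(C_1) > 0$ gives loop-freeness and $F_k = [n]$ gives coloop-freeness, so $M$ is connected.

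\smallskip

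Next I would describe the other $k$ maximal cells. By Proposition~\ref{prop:ksplitHer}, $\Sigma$ has exactly $k$ maximal cells, and by Proposition~\ref{prop:induced_subdivision} together with Theorem~\ref{thm:main1}, every vertex $e_I \in H^\Sigma$ has all of $\Sigma$ determined by the corresponding multi-split of the product of simplices $\fig(e_I)$. Concretely, the common cell $H^\Sigma$ and the relation $\relation$ attached to each maximal cell $P$ define it completely via Lemma~\ref{lem:singlepoltope}; the $k$ maximal cells correspond to the $k$ distinct orderings of $C_1,\ldots,C_k$ that arise. Using the fan structure $\cF$ with apex in $H^\Sigma$: the $k$ maximal cones share $\aff H^\Sigma$ as lineality space, and their dual $(k-1)$-simplex (Proposition~\ref{prop:ksplitHer}) has $k$ facets, each omitting one vertex. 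Passing between adjacent maximal cells corresponds to flipping the single ray $R_{a,b}$ of Lemma~\ref{lem:rays} not contained in that cell; tracking how this changes the chain of cyclic flats shows that the $k$ cells are obtained from one another by cyclically permuting $C_1, \ldots, C_k$, i.e. the $h$-th cell has chain $\emptyset \subsetneq C_h \subsetneq C_h \cup C_{h+1} \subsetneq \ldots$ (indices mod $k$). In particular, knowing one maximal cell recovers the cyclic order of the $C_\ell$, hence $H^\Sigma$ and all remaining cells.

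\smallskip

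The main obstacle I expect is the last part: verifying rigorously that the $k$ maximal cells are exactly the cyclic shifts, not some larger or differently-structured family of orderings. One must argue that the nested-matroid chains of any two maximal cells of $\Sigma$ are related by a single cyclic shift — this requires understanding how the $k$ rays $R_{a,b}$ of $\cF$ distribute among the maximal cells (each cell containing $k-1$ of the $k$ rays, by Proposition~\ref{prop:ksplitHer}) and translating the combinatorics of the dual $(k-1)$-simplex into a statement about which component-orderings can occur. The identification of a single cell as a nested matroid polytope is essentially immediate from Lemma~\ref{lem:singlepoltope} and Example~\ref{ex:nested}; the global consistency of all $k$ cells is where the real work lies.
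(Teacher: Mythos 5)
Your first half is correct and follows the paper's own route: read the exterior description of Lemma~\ref{lem:singlepoltope} as the defining inequalities of a nested matroid in the sense of Example~\ref{ex:nested}, with cyclic flats $\emptyset\subsetneq F_1\subsetneq\ldots\subsetneq F_k=[n]$ and strictly increasing ranks coming from loop- and coloop-freeness of $\matroid(H^\Sigma)$. That part needs no further work.

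The gap is in the cyclic-permutation claim, and you have correctly located it yourself but then not closed it. ``Tracking how flipping the ray changes the chain of cyclic flats'' is not an argument; the conclusion that the $k$ orderings realized by the maximal cells are exactly the cyclic shifts of $C_1\relation\ldots\relation C_k$ requires two concrete facts that your proposal never establishes. First, the unique ray of $\cF$ \emph{not} contained in $\polytope(M)$ contains a vertex $w=v+e_a-e_b$ with $v\in H^\Sigma$ and $w\notin\polytope(M)$; since $w$ must violate the inequalities \eqref{eq:singlepoltope} strictly for all $h<k$, one reads off $a\in C_1$ and $b\in C_k$, and hence $C_k\relationQ C_1$ for \emph{every} other maximal cell $Q$ (all of which contain this ray). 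Second, each $Q\neq\polytope(M)$ shares a facet with $\polytope(M)$ (the dual complex is a simplex), and that facet is cut out by $\sum_{\ell=1}^m\sum_{i\in C_\ell}x_i=\sum_{\ell=1}^m\rank(C_\ell)$ for a single index $m$; crossing this facet reverses exactly the relation $C_m\relationQ C_{m+1}$ while all the other inequalities of \eqref{eq:singlepoltope} remain valid for $Q$. Combining the two facts forces $C_{m+1}\relationQ\ldots\relationQ C_k\relationQ C_1\relationQ\ldots\relationQ C_m$, i.e.\ a cyclic shift, and as $m$ ranges over the $k-1$ shared facets plus the cell $\polytope(M)$ itself you get all $k$ shifts. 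Without these two steps your proposal asserts the answer rather than deriving it, so as written it is incomplete exactly where you predicted the real work would lie.
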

\begin{proof}
   Fix a maximal cell $P$ in $\Sigma$ and let $C_1\relation\ldots\relation C_k$ be the connected components of the partition matroid $N=\matroid(H^\Sigma)$.
   We define $F_\ell=\bigcup_{i=1}^\ell C_i$ for all $1\leq \ell\leq k$.
   We have
   \[
      0 < \rank_N( F_1 ) < \ldots < \rank_N(F_{\ell-1}) < \rank_N(F_{\ell-1})+\rank_N(C_\ell)=\rank_N(F_\ell) < \ldots < \rank_N(F_k) = d \enspace .
   \]
   The sets $F_\ell$ and $\emptyset$ are the cyclic flats of nested matroid $M$ with ranks given by $\rank_N(F_\ell)$ resp. $0$; see Example~\ref{ex:nested}.   
   The matroid polytope $P(M)$ of $M$ is exactly described by Lemma~\ref{lem:singlepoltope}.
   This implies that the maximal cell $P$ is the matroid polytope $P(M)$ with the desired $k+1$ cyclic flats.

   The intersection of all maximal cells of the $k$-split $\Sigma$ excluded the cell $P(M)$ is a ray of $\cF$.
   This ray $R_{a,b}$ contains a vertex $w\in\Delta(d,n)$ of the form $v+e_a-e_b$, where $v\in H^\Sigma$.
   We can choose this vertex $w$, such that $w\not\in P(M)$. We deduce from \eqref{eq:singlepoltope} the following strict inequalities for $w$:
   \[
   \sum_{\ell=1}^h \sum_{i\in C_\ell} w_i \ >\ \sum_{\ell=1}^h \rank_M(C_\ell) \text{ for all } h< k \enspace .
   \]
   As $w=v+e_a-e_b$ and $\sum_{i\in C_\ell} v_i = \rank(C_\ell)$, we get for $h=1$ that $a\in C_1$ and from $h=k-1$ that $b\in C_k$.
   This implies that for every maximal cell $Q\neq \polytope(M)$ of $\Sigma$ we have $C_k \relationQ C_1$.

   Moreover, each maximal cell $Q\neq \polytope(M)$ shares a facet with $\polytope(M)$.
   Let $\sum_{\ell=1}^m \sum_{i\in C_\ell} x_i  = \sum_{\ell=1}^m \rank_M(C_\ell)$ be the facet defining equation.
   This facet implies $ C_m \not\relationQ C_{m+1}$.
   All the other inequalities of (\ref{eq:singlepoltope}) are valid for $Q$.
   We conclude that $ C_{m+1} \relationQ \ldots \relationQ C_k \relationQ C_1 \relationQ \ldots \relationQ C_m$.
\end{proof}
Note that there is a finer matroid subdivision for any $k$-split of the hypersimplex $\Delta(d,n)$, except for the case $k=d=2$ and $n=4$.
Moreover, each matroid polytope of a connected nested matroid with at least four cyclic flats on at least $k+d+1$ elements occurs in a coarsest matroid subdivision, which is not a $k$-split.

In contrast we have that for each connected nested $(d,n)$-matroid $M$ with $k+1$ cyclic flats there is a unique $k$-split of the hypersimplex $\Delta(d,n)$ that contains $\polytope(M)$ as a maximal cell.
Conversely, a $k$-split of the hypersimplex $\Delta(d,n)$ determines $k$ such nested matroids.
Furthermore, each $k$-split $\Sigma$ determines a unique loop- and coloop-free partition matroid $\matroid(H^\Sigma)$, while each ordering of the connected components of $\matroid(H^\Sigma)$ leads to a unique connected nested $(d,n)$-matroid with $k+1$ cyclic flats.
We conclude the following enumerative relations.
\begin{corollary}
   The following three sets are pairwise in bijection:
   \begin{enumerate}
      \item The loop- and coloop-free partition $(d,n)$-matroids with $k$ connected components, 
      \item\label{item:b} the collections of all connected nested $(d,n)$-matroids with $k+1$ cyclic flats,
         whose pairwise set differences of all of those cyclic flats coincide,
      \item\label{item:c} the collections of $k$-splits of $\Delta(d,n)$ with the same interior cell.
   \end{enumerate}
   Moreover, the collections in (\ref{item:b}) have all the same size $k!$
   and those in (\ref{item:c}) are of size $(k-1)!$.
\end{corollary}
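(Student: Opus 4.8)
The plan is to exhibit all three sets as different encodings of one piece of combinatorial data, namely an unordered partition $\{C_1,\dots,C_k\}$ of $[n]$ together with integers $0<d_i<\size(C_i)$ satisfying $\sum_i d_i=d$. By Example~\ref{ex:partition}, such a datum $D$ is the same thing as a loop- and coloop-free partition $(d,n)$-matroid with $k$ connected components: the blocks $C_i$ are the connected components, $d_i=\rank(C_i)$, and loop- resp.\ coloop-freeness is exactly $0<d_i<\size(C_i)$. Write $N_D$ for this matroid and $H_D=\polytope(N_D)=\Delta(d_1,C_1)\times\cdots\times\Delta(d_k,C_k)$. This identifies set (i) with the set of data $D$, and from now on all I have to do is translate a $D$ into sets (ii) and (iii).

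For (i)$\leftrightarrow$(ii): given $D$ and a linear order $\sigma\in\Sym([k])$ of its blocks, the chain $\emptyset\subsetneq C_{\sigma(1)}\subsetneq C_{\sigma(1)}\cup C_{\sigma(2)}\subsetneq\cdots\subsetneq[n]$, with rank $\sum_{i\le\ell}d_{\sigma(i)}$ on its $\ell$-th proper term, defines via Example~\ref{ex:nested} a connected nested $(d,n)$-matroid $M_\sigma$ whose $k+1$ cyclic flats are exactly these partial unions together with $\emptyset$ (connectedness: $d_{\sigma(1)}>0$ and the top flat is $[n]$). Distinct $\sigma$ give distinct chains, since $C_{\sigma(\ell)}$ is recovered as the $\ell$-th consecutive set difference, hence distinct $M_\sigma$, since a matroid is determined by its cyclic flats and their ranks. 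The image of $\sigma\mapsto M_\sigma$ is precisely the collection of connected nested $(d,n)$-matroids with $k+1$ cyclic flats whose consecutive cyclic-flat differences form the partition $\{C_1,\dots,C_k\}$ with the matching ranks, because from any such matroid one reads off the blocks as the consecutive differences and the $d_i$ as the rank jumps, recovering $D$ and the order. So $D\mapsto\{M_\sigma:\sigma\in\Sym([k])\}$ is a bijection (i)$\leftrightarrow$(ii), and each collection in (ii) has size $k!=\size\Sym([k])$.

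For (ii)$\leftrightarrow$(iii): by Theorem~\ref{thm:main2} every connected nested $(d,n)$-matroid with $k+1$ cyclic flats is a maximal cell of a $k$-split of $\Delta(d,n)$, and as observed just before the corollary this $k$-split is unique; call it $\Sigma_\sigma$ for $M_\sigma$. Theorem~\ref{thm:main2} further says the maximal cells of $\Sigma_\sigma$ are precisely the $M_\tau$ for $\tau$ a cyclic rotation of $\sigma$, and that $\matroid(H^{\Sigma_\sigma})$ is a loop- and coloop-free partition matroid with components $\{C_1,\dots,C_k\}$; tracking the ranks through the proof of Theorem~\ref{thm:main2} (where $\rank_M(F_h)=\sum_{i\le h}\rank_{\matroid(H^{\Sigma_\sigma})}(C_i)=\sum_{i\le h}d_{\sigma(i)}$ forces $\rank_{\matroid(H^{\Sigma_\sigma})}(C_i)=d_i$) gives $\matroid(H^{\Sigma_\sigma})=N_D$. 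The cyclic group of order $k$ acts freely by rotation on the $k!$ linear orders, so there are $(k-1)!$ rotation classes; the map $\sigma\mapsto\Sigma_\sigma$ is constant on each class (all $k$ of its members are maximal cells of the single $k$-split $\Sigma_\sigma$, each lying in a unique one) and separates distinct classes (their maximal-cell sets, the $M_\tau$, are disjoint by injectivity of $\tau\mapsto M_\tau$). Finally every $k$-split $\Sigma$ with $\matroid(H^\Sigma)=N_D$ equals some $\Sigma_\sigma$: any maximal cell of $\Sigma$ is of the form $M_\sigma$ by Theorem~\ref{thm:main2}, and $M_\sigma$ lies in the unique $k$-split $\Sigma_\sigma$, so $\Sigma=\Sigma_\sigma$. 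Thus $\sigma\mapsto\Sigma_\sigma$ descends to a bijection between the $(k-1)!$ rotation classes and the $k$-splits of $\Delta(d,n)$ with interior cell $H_D$; combined with the previous paragraph this yields (ii)$\leftrightarrow$(iii) and (i)$\leftrightarrow$(iii), and each collection in (iii) has size $(k-1)!$.

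The step I expect to need the most care is the passage from linear to cyclic orders: one must check that two cyclic orders which are reverses of each other (for instance $C_1\prec C_2\prec C_3$ versus $C_1\prec C_3\prec C_2$) yield genuinely distinct $k$-splits, which reduces to observing that their associated families of nested maximal cells are disjoint, and one must verify the rank-matching $\rank_{\matroid(H^{\Sigma_\sigma})}(C_i)=d_i$ so that the interior cell is really $H_D$ rather than some other partition matroid on the same blocks. Everything else is bookkeeping with the explicit polytopal descriptions of Examples~\ref{ex:partition} and~\ref{ex:nested} and of Lemma~\ref{lem:singlepoltope}.
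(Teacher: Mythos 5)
Your proposal is correct and follows essentially the same route as the paper, which justifies this corollary not by a separate proof but by the discussion immediately preceding it: the unique partition matroid $\matroid(H^\Sigma)$ attached to a $k$-split, the $k!$ orderings of its components yielding the nested matroids of Theorem~\ref{thm:main2}, and the cyclic-rotation structure of the maximal cells giving the factor $(k-1)!$. Your write-up is simply a more explicit bookkeeping of that same argument (including the rank-matching and the disjointness of rotation classes, which the paper leaves implicit).
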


Now we are able to count $k$-times all $k$-splits of the hypersimplex $\Delta(d,n)$ by simply counting nested matroids, i.e., ascending chains of subsets. The following is a natural generalization of the formulae that count $2$-splits in \cite[Theorem 5.3]{HerrmannJoswig:2008} and $3$-splits in \cite[Corollary 6.4]{Herrmann:2011}.
\begin{proposition}\label{prop:enum_ksplits}
 The total number of $k$-splits in the hypersimplex $\Delta(d,n)$ equals
   \[
      \frac{1}{k}\sum_{a_1=2}^{\beta_1 - 2(k-1)} \cdots \sum_{\alpha_{k-1}=2}^{\beta_{k-1}-2} \mu_k^{d,n}(\alpha_1,\ldots,\alpha_{k-1}) \prod_{j=1}^{k-1}\binom{\beta_j}{\alpha_j}
   \]
where $\beta_i = n-\sum_{\ell=1}^{i-1}\alpha_\ell$ and
   \[
      \mu_k^{d,n}(\alpha_1,\ldots,\alpha_{k-1}) \ =\ \size\left(\SetOf{x\in\ZZ^k}{\sum_{i=1}^k x_i=d
      \text{ and } 0 < x_j < \alpha_j \text{ for $j\leq k$}}\right)
   \]
with $\alpha_k=\beta_k$.
\end{proposition}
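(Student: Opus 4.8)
The strategy is to translate the problem into counting nested matroids. By Theorem~\ref{thm:main2} and the preceding corollary, $k$-splits of $\Delta(d,n)$ are in $(k-1)!$-to-one correspondence with loop- and coloop-free partition $(d,n)$-matroids carrying $k$ connected components, hence the total number of $k$-splits is $\tfrac{1}{k}$ times the number of ordered tuples $(C_1,\ldots,C_k)$ of nonempty sets partitioning $[n]$, together with a rank vector $(d_1,\ldots,d_k)\in\ZZ^k$ satisfying $d_1+\cdots+d_k=d$ and $0<d_\ell<\size(C_\ell)$ for every $\ell$. (The factor $\tfrac1k$ absorbs $\tfrac{(k-1)!}{k!}$: each unordered partition matroid is counted $k!$ times by ordered tuples but corresponds to $(k-1)!$ distinct $k$-splits.)

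First I would set up the count of ordered set partitions. Write $\alpha_\ell=\size(C_\ell)$ for $\ell\le k-1$ and note $\alpha_k$ is then forced to be $\beta_k=n-\sum_{\ell=1}^{k-1}\alpha_\ell$. Choosing the sets in order, there are $\binom{n}{\alpha_1}$ ways to pick $C_1$ from $[n]$, then $\binom{n-\alpha_1}{\alpha_2}=\binom{\beta_2}{\alpha_2}$ ways to pick $C_2$ from the remaining $\beta_2=n-\alpha_1$ elements, and in general $\binom{\beta_j}{\alpha_j}$ ways to choose $C_j$; this yields the product $\prod_{j=1}^{k-1}\binom{\beta_j}{\alpha_j}$ for each admissible size vector $(\alpha_1,\ldots,\alpha_{k-1})$. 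The ranges: each $\alpha_j$ must be at least $2$ (a component of size $1$ would force $d_\ell=0$ or $d_\ell=\size(C_\ell)$, i.e.\ a loop or coloop, contradicting loop- and coloop-freeness), and we need enough room left for the remaining $k-j$ components, each of size $\ge 2$, so $\alpha_j\le\beta_j-2(k-j)$; for $j=k-1$ this reads $\alpha_{k-1}\le\beta_{k-1}-2$. This accounts for the summation bounds and the product in the formula.

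Next I would count, for a fixed size vector, the admissible rank vectors. Given $\alpha_1,\ldots,\alpha_k$ (with $\alpha_k=\beta_k$), the rank vector $(d_1,\ldots,d_k)$ must lie in $\{x\in\ZZ^k : \sum_i x_i=d,\ 0<x_j<\alpha_j\}$, and the number of such lattice points is exactly $\mu_k^{d,n}(\alpha_1,\ldots,\alpha_{k-1})$ by definition. Crucially, the choice of the partition into labelled sets $C_1,\ldots,C_k$ and the choice of the rank vector are independent, since the constraint on $d_\ell$ depends only on $\size(C_\ell)=\alpha_\ell$, not on which elements lie in $C_\ell$. Hence the number of loop- and coloop-free partition $(d,n)$-matroids with an ordering of the $k$ components equals $\sum \mu_k^{d,n}(\alpha_1,\ldots,\alpha_{k-1})\prod_{j=1}^{k-1}\binom{\beta_j}{\alpha_j}$ over the stated ranges, and dividing by $k$ gives the number of $k$-splits.

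**Main obstacle.** The combinatorial bookkeeping is routine; the only genuinely delicate point is pinning down the overcounting factor correctly, i.e.\ verifying that the passage from (ordered partition matroids with rank data) to (unordered partition matroids) divides by $k!$ while the passage from unordered partition matroids to $k$-splits multiplies by $(k-1)!$ — this is where I would lean on the corollary preceding the proposition, and I would double-check consistency against the known $k=2$ case of \cite[Theorem 5.3]{HerrmannJoswig:2008} and the $k=3$ case of \cite[Corollary 6.4]{Herrmann:2011}. One should also confirm that distinct ordered size vectors with the accompanying set choices never produce the same labelled partition, which is immediate since $C_j$ is recovered as the $j$-th block, and that every loop- and coloop-free partition matroid does arise this way — which is exactly the content of Theorem~\ref{thm:main2} together with the fact that $\matroid(H^\Sigma)$ is an arbitrary loop- and coloop-free partition matroid.
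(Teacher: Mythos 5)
Your proposal is correct and follows essentially the same route as the paper: both reduce the count to enumerating the ordered combinatorial data (your ordered partition tuples $(C_1,d_1),\ldots,(C_k,d_k)$ are in obvious bijection with the paper's chains of cyclic flats $F_\ell=\bigcup_{i\le\ell}C_i$ of connected nested matroids), arrive at the same product of binomials weighted by $\mu_k^{d,n}$ over the same ranges, and divide by $k$ using the correspondence recorded in the corollary. Your bookkeeping of the overcounting factor ($k!$ orderings versus $(k-1)!$ splits per partition matroid) is a slightly more explicit justification of the final division by $k$ than the paper gives, but it is the same argument.
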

\begin{proof}
   Fix non-negative numbers $\alpha_1,\ldots,\alpha_k$ that sum up to $n$.
   The number of connected nested $(d,n)$-matroids with $k+1$ cyclic flats $\emptyset=F_0\subsetneq F_1,\ldots,F_k=[n]$ that satisfy $\size(F_j-F_{j-1})=\alpha_j$ is determinated by the following product of binomial coefficients weighted by the number $\mu^{d,n}_k$ of possibilities for ranks on the cyclic flats
   \[
    \mu_k^{d,n}(\alpha_1,\ldots,\alpha_{k-1}) \prod_{j=1}^k\binom{n-\alpha_1-\ldots-\alpha_{j-1}}{\alpha_j} \enspace .
   \]
Clearly, the rank function satisfies $0 < \rank( F_j )-\rank( F_{j-1} )   <  \size( F_j-F_{j-1} ) = \alpha_j$, hence $\alpha_j\geq 2$. Moreover, the last binomial coefficient is equal to one.
The number $\alpha_k$ is determinated by $\alpha_k=n-\sum_{j}^{k-1}\alpha_j$.
We get that the number of connected nested $(d,n)$-matroids with $k+1$ cyclic flats is given by
  \[
       \sum_{a_1=2}^{\beta_1 - 2(k-1)} \cdots \sum_{\alpha_{k-1}=2}^{\beta_{k-1}-2} \mu_k^{d,n}(\alpha_1,\ldots,\alpha_{k-1}) \prod_{j=1}^{k-1}\binom{\beta_j}{\alpha_j} \enspace .
   \]
   We derive the number of $k$-splits by division by $k$.  
   This completes the proof.
\end{proof}

\begin{example}
   Consider the case that $n=d+k=2k$.
   The number of loop- and coloop-free partition $(k,n)$-matroids equals $(2k-1)!! = (2k-1)(2k-3)\cdots 1$,
   as $\alpha_j=2$ for all $j\leq k$.
   The number of $k$-splits in $\Delta(k,2k)$ equals $(k-1)! (2k-1)!!$ and those of connected nested matroids with $k+1$ cyclic flats $k! (2k-1)!!$. 
   Note that in this case all these $k$-splits, partitions and nested matroids are equivalent under reordering of the $[n]$ elements.
\end{example}

Combining Theorem~\ref{thm:main1} and Theorem~\ref{thm:main2} leads to an enumeration of all $k$-splits of the product of simplices $\Delta_{d-1}\times\Delta_{\ell-1}$, by splitting the connected component $C_j$ into $A_j$ and $B_j$ with $\rank(C_j) = \size(A_j)$.
Note that the number of $k$-splits of a product of simplices can not simply be derived from the number of  $k$-splits of a hypersimplex by double counting, since each $k$-split is covered by multiple vertex figures whose number depends on the $k$-split and no product of simplices covers all $k$-splits of a hypersimplex.
\begin{theorem}\label{thm:simplices}
   The $k$-splits of $\Delta_{d-1}\times\Delta_{\ell-1}$ are in bijection with 
   collections of $k$ pairs $(A_1,B_1),\ldots,(A_k,B_k)$, such that
   $A_1,\ldots,A_k$ is a partition of $[d]$ and $B_1,\ldots,B_k$ is a partition of $[\ell]$.
   In particular, the number of $k$-splits of $\Delta_{d-1}\times\Delta_{\ell-1}$ equals
   \[
      \frac{1}{k} \left(
      \sum_{\alpha_1=1}^{\beta_1-(k-1)}  \cdots \sum_{\alpha_{k-1}=1}^{\beta_{k-1}-1} \, 
      \prod_{j=1}^{k-1}\binom{\beta_j}{\alpha_j} \right)\cdot \left(
      \sum_{\gamma_1=1}^{\delta_1-(k-1)} \cdots \sum_{\gamma_{k-1}=1}^{\delta_{k-1}-1} \,
      \prod_{j=1}^{k-1}\binom{\delta_j}{\gamma_j} \right) \enspace ,
   \]
where $\beta_i = d-\sum_{j=1}^{i-1} \alpha_j$ and $\delta_i = \ell-\sum_{j=1}^{i-1} \gamma_j$ .
\end{theorem}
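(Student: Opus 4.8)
The plan is to transport the problem to the hypersimplex via the tropical Stiefel map and then read off the answer from Theorem~\ref{thm:main2}. Put $n=d+\ell$, fix a $d$-element subset $I\subseteq[n]$, and identify $\Delta_{d-1}\times\Delta_{\ell-1}$ with the vertex figure $\fig(e_I)=\Delta(d-1,I)\times\Delta(1,[n]-I)$ of $\Delta(d,n)$, where the vertices of the first factor are labelled by $I\cong[d]$ and those of the second by $[n]-I\cong[\ell]$. By Theorem~\ref{thm:main1} together with its proof, restriction to $\fig(e_I)$ and extension by the Stiefel map are mutually inverse bijections between the $k$-splits of $\Delta_{d-1}\times\Delta_{\ell-1}$ and the $k$-splits $\Sigma$ of $\Delta(d,n)$ with $e_I\in H^\Sigma$; so it suffices to enumerate the latter.

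By Theorem~\ref{thm:main2} and the Corollary that follows it, such a $k$-split $\Sigma$ is equivalent to the datum of its common cell $H^\Sigma=\polytope(\matroid(H^\Sigma))=\Delta(d_1,C_1)\times\cdots\times\Delta(d_k,C_k)$, i.e.\ a partition $\{C_1,\dots,C_k\}$ of $[n]$ with $0<d_i<\size(C_i)$ and $\sum_i d_i=d$, together with a cyclic ordering $C_1\relation\cdots\relation C_k$ of the connected components (the $k$ maximal cells being the matroid polytopes of the connected nested matroids attached to the $k$ cyclic rotations). The condition $e_I\in H^\Sigma$ holds precisely when $\size(I\cap C_i)=d_i$ for all $i$; hence the ranks are forced to $d_i=\size(I\cap C_i)=\size(A_i)$ with $A_i:=I\cap C_i$, and $0<d_i<\size(C_i)$ says exactly that each block $C_i$ meets both $I$ and $[n]-I$. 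Setting also $B_i:=([n]-I)\cap C_i$, the partition $\{C_1,\dots,C_k\}$ translates bijectively into a collection of $k$ pairs $(A_i,B_i)$ with $A_1,\dots,A_k$ a partition of $[d]$ into nonempty blocks and $B_1,\dots,B_k$ a partition of $[\ell]$ into nonempty blocks, while the cyclic order of the $C_i$ becomes the cyclic order of the tuple $\bigl((A_1,B_1),\dots,(A_k,B_k)\bigr)$. This is the asserted bijection.

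For the formula, count first the ordered tuples $(A_1,\dots,A_k)$ partitioning $[d]$ into nonempty parts: pick $\alpha_1=\size(A_1)$ between $1$ and $d-(k-1)$ and then $A_1$ among the $\binom{d}{\alpha_1}$ subsets of that size, recurse on the remaining $\beta_2=d-\alpha_1$ elements, and continue until $A_k$ is determined; this gives $\sum_{\alpha_1=1}^{\beta_1-(k-1)}\cdots\sum_{\alpha_{k-1}=1}^{\beta_{k-1}-1}\prod_{j=1}^{k-1}\binom{\beta_j}{\alpha_j}$ with $\beta_1=d$, and likewise with $\gamma$'s and $\delta$'s for $[\ell]$. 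Their product counts the ordered tuples $\bigl((A_1,B_1),\dots,(A_k,B_k)\bigr)$; since the sets $C_i=A_i\cup B_i$ are nonempty and pairwise disjoint, no nontrivial cyclic rotation fixes such a tuple, so the cyclic group of order $k$ acts freely and the number of $k$-splits equals this product divided by $k$.

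The only genuinely delicate points are that the correspondence in Theorem~\ref{thm:main1} really is a bijection (one must know that Stiefel-extending the $k$-split induced on $\fig(e_I)$ recovers $\Sigma$, which is established inside the proof of Theorem~\ref{thm:main1}), and that the cyclic ordering of the connected components must be carried faithfully through Theorem~\ref{thm:main2} — it is exactly this ordering that produces the factor $\tfrac1k$ and matches the $(k-1)!$-to-one behaviour recorded in the Corollary. Everything else is routine bookkeeping.
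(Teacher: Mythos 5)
Your argument is correct and follows exactly the route the paper intends: the paper gives only the one-line sketch ``combine Theorem~\ref{thm:main1} and Theorem~\ref{thm:main2}, splitting each connected component $C_j$ into $A_j$ and $B_j$ with $\rank(C_j)=\size(A_j)$,'' and your proof is a faithful, more detailed elaboration of precisely that, including the correct treatment of the cyclic ordering that accounts for the factor $\tfrac1k$. No discrepancies to report.
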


\section{coarsest matroid subdivisions}\label{sec:computations}
\noindent
We have enumerated specific coarsest matroid subdivisions.
In this section we will compare two constructions for coarsest matroid subdivisions.
We have seen already the first of these constructions for matroid subdivisions.
The Stiefel map lifts rays of $\Delta_{d-1}\times\Delta_{n-d-1}$ to rays of the Dressian $\Dr(d,n)$.
This construction for rays has been studied in \cite{HerrmannJoswigSpeyer:2014} under the name of \enquote{tropically rigid point configurations}. 
Other (coarsest) matroid subdivisions can be constructed via matroids.
Let $M$ be a $(d,n)$-matroid. The \emph{corank vector} of $M$ is the map 
\[
   \rho_M: \binom{[n]}{d} \to \NN ,\quad S \mapsto d-\rank_M(S) \enspace .
\]
The corank vector is a tropical Pl\"ucker vector.
Moreover, the induced subdivision contains the matroid polytope $\polytope(M)$ as a cell; see \cite[Example 4.5.4]{Speyer:2005} and \cite[Proposition 34]{JoswigSchroeter:2017}.

There are coarsest matroid subdivisions, obtained from corank vectors, that are not in the image of the Stiefel map; see \cite[Figure 7]{HerrmannJoswigSpeyer:2014} and \cite[Theorem 41]{JoswigSchroeter:2017}.

There are matroid subdivisions that are both, induced by the Stiefel map and corank subdivisions.
\begin{example}
   We have seen in Theorem~\ref{thm:main1}, that every multi-split of the hypersimplex is induced by the Stiefel map.
   Moreover, each multi-split is a corank subdivision. The maximal cells are  nested matroids. This follows from Theorem~\ref{thm:main2} combined with the methods of \cite[Section 4]{JoswigSchroeter:2017}.
\end{example}

A subdivision that is induced by a corank vector satisfies the following criteria.
With these we are able to certify that a matroid subdivision is not induced by a corank vector.
\begin{lemma}\label{lem:corank}
   Let $M$ be a $(d,n)$-matroid and $\Sigma$ the corank subdivision of $\polytope(M)$.
   For each vertex $v$ of the hypersimplex $\Delta(d,n)$ a (maximal) cell $\sigma\in\Sigma$ exists, such that 
   $v\in\sigma$ and $\sigma\cap\polytope(M)\neq\emptyset$.
   In particular, the cell $P(M)$ together with the neighboring cells cover all vertices of $\Delta(d,n)$.
\end{lemma}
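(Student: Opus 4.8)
The plan is to exhibit, for an arbitrary vertex $e_S$ of $\Delta(d,n)$, a maximal cell of $\Sigma$ that contains $e_S$ and meets $\polytope(M)$. I would work directly with the corank vector $\rho_M$ and the regular subdivision it induces. Recall that the maximal cell of $\Sigma$ containing $e_S$ in its relative interior is the matroid polytope of the matroid whose bases minimize the lifted functional; more precisely, the cell containing $e_S$ is $\polytope(M')$ where $M'$ is obtained by a local analysis of $\rho_M$ near $S$. The key structural fact I would use is that the corank subdivision of $\polytope(M)$ restricted to the star of $e_S$ is governed by a single matroid, and I want to show that matroid shares a basis with $M$ whenever $S$ is a basis-candidate, and more generally that the cell through $e_S$ touches $\polytope(M)$.

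The concrete approach: given $S\in\binom{[n]}{d}$, choose a basis $B$ of $M$ minimizing the symmetric difference $|S\triangle B|$ (equivalently minimizing $\rho_M(S)=d-\rank_M(S)$ is already $|S\setminus \text{(max independent subset)}|$, but I want a basis of $M$ close to $S$). I would then argue that the segment, or rather a monotone lattice path, from $e_S$ to $e_B$ obtained by successive basis exchanges stays inside a single cell of $\Sigma$. For this one checks that along each exchange step $e_T \mapsto e_T - e_i + e_j$ with $i\in T\setminus B$, $j\in B\setminus T$ chosen compatibly, the corank values satisfy the linearity (cocircuit/circuit) relation that places consecutive path vertices in a common face of the subdivision — this is exactly the kind of computation underlying the statement that the corank vector is a tropical Plücker vector and that $\polytope(M)$ is a cell. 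Since $e_B$ is a vertex of $\polytope(M)$, the cell of $\Sigma$ containing the whole path contains both $e_S$ and a point of $\polytope(M)$, and enlarging to a maximal cell containing it finishes the argument. The final "In particular" sentence is then immediate: every vertex of $\Delta(d,n)$ lies in some cell meeting $\polytope(M)$, and any cell meeting $\polytope(M)$ either is $\polytope(M)$ or shares a face with it, hence is among $\polytope(M)$ and its neighbors.

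The main obstacle I anticipate is verifying the "stays in one cell" claim rigorously: I need that the minimizing matroid of $\rho_M$ (the matroid whose polytope is the cell through $e_S$) actually contains a basis on the chosen path to $e_B$, rather than just that $e_S$ and $e_B$ each lie in \emph{some} cell. One clean way to handle this is to use the explicit description of the corank subdivision via matroid intersection: the cell containing $e_S$ corresponds to the matroid $M \vee (\text{a transversal-type matroid adapted to }S)$, and one shows $S$-close bases of $M$ survive in this join. Alternatively — and this is probably the route I would actually take — I would invoke that $\rho_M$ is a tropical Plücker vector and use the characterization of cells of a matroid subdivision as matroid polytopes together with the basis-exchange graph being connected: it suffices to show that for each vertex $e_T$ on a geodesic (in the exchange graph) from $e_S$ to the nearest basis of $M$, the edge of the exchange graph used is an edge of the cell of $\Sigma$ through $e_T$, which reduces to the single inequality $\rho_M(T) + \rho_M(T-e_i+e_j) \ge \rho_M(T-e_i) + \rho_M(T+e_j-?)$-type submodularity of corank, a standard matroid inequality. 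Assembling these local steps into the global statement is then bookkeeping.
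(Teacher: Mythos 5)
Your overall strategy --- walk from $e_S$ to a basis $B$ of $M$ nearest to $S$ by single exchanges and argue that the whole walk stays in one cell --- is the same as the paper's, and your choice of $B$ (minimizing $|S\triangle B|$, equivalently extending a maximal independent subset of $S$ to a basis of $M$, which forces $|S\setminus B|=d-\rank_M(S)$) matches the paper's. The gap is exactly the step you yourself flag as the main obstacle, and neither of your proposed fixes closes it. Checking that each consecutive pair $e_T$, $e_{T-e_i+e_j}$ lies in a common face, or that each exchange edge is an edge of the cell through $e_T$, only gives pairwise containment; successive pairs could lie in different maximal cells, so this does not produce \emph{one} cell containing both $e_S$ and $e_B$. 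Your first alternative (describing the cell through $e_S$ as a matroid join and claiming that $S$-close bases of $M$ ``survive'') is asserted rather than proved, and the submodularity inequality in your second alternative is not even fully written down, let alone shown to assemble into a global statement.

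What is missing is a single global certificate, and the paper supplies one. Take the chain $S=S_{d-\rank_M(S)},\dots,S_0=B$ obtained by swapping in one element of $B\setminus S$ at a time, so that $|S_j\cap S_{j+1}|=d-1$ and $\rho_M(S_j)=j$. For \emph{every} $d$-set $T$ the set $T\cap B$ is independent in $M$, hence $\rho_M(T)=d-\rank_M(T)\le d-|T\cap B|=|T\setminus B|$. Therefore the affine functional $\ell(x)=\sum_{i\notin B}x_i$ satisfies $\ell(e_T)\ge\rho_M(T)$ for all $T$, with equality for every $S_j$ in the chain (since $\rank_M(S_j)=|S_j\cap B|=d-j$). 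So the hyperplane $x_{n+1}=\ell(x)$ supports the lifted point set from below, and the face it cuts out is a single cell of $\Sigma$ containing all of $e_{S},\dots,e_{B}$; that cell meets $\polytope(M)$ in $e_B$. This explicit supporting functional is the one idea your proposal lacks; with it, your path argument becomes a complete proof. (Your handling of the final ``in particular'' sentence is fine.)
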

\begin{proof}
   Let $M$ be a $(d,n)$-matroid and $\Sigma$ the corresponding corank subdivision of the hypersimplex $\Delta(d,n)$.
   Furthermore, let $v$ be a vertex of the hypersimplex $\Delta(d,n)$.
   Then $v = e_S$ for a set $S\in\tbinom{[n]}{d}$.
   Given a basis of $M$ and a maximal independent subset of $S$, the set $S$ can be enlarged to a basis with $d-\rank(S)$ elements of the basis.
   Hence, there is a sequence $S_{d-\rank(S)},\ldots,S_0\in\tbinom{[n]}{d}$, 
   such that $S_{d-\rank(S)}=S$, $\size(S_j\cap S_{j+1})=d-1$ and $d-\rank(S_j) = j$ for all $0 \leq j< d-\rank(S)$.
   Thus, the corresponding $d-\rank(S)+1$ vertices of the lifted polytope of $\Delta(d,n)$ lie on the hyperplane $\sum_{i\in S_0}x_i=x_{n+1}$, where $x_{n+1}$ is the height coordinate, i.e. the corank. This hyperplane determinates a face of the lifted polytope and hence a cell $\sigma\in\Sigma$.
   Both vertices $v$ and $e_{S_0}\in\polytope(M)$ are contained in $\sigma$.
\end{proof}

\begin{lemma}\label{lem:pc_corank}
   Let $\Sigma$ be a subdivision of the hypersimplex $\Delta(d,n)$, such that
the subdivision is the corank subdivision of a connected matroid
 and induced by a regular subdivison of the product of simplices via the Stiefel map.
The subdivision on the product of simplices $\Delta_{d-1}\times\Delta(n-d-1)$ is 
realizable with a $0/1$-vector as lifting function.
\end{lemma}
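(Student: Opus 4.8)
By hypothesis $\Sigma$ is the tropical Stiefel image of a regular subdivision $\Sigma'$ of the product of simplices $\fig(e_I)\cong\Delta_{d-1}\times\Delta_{n-d-1}$ for some base vertex $e_I$ of $\Delta(d,n)$, and $\Sigma$ is the regular subdivision with corank lifting $\rho_M$. The plan is to show that $\Sigma'$ is already realized by the restriction of $\rho_M$ and that this restriction is $\{0,1\}$-valued. The first step is to verify that $I$ is a basis of $M$. Indeed, the Stiefel lift is obtained by coning the cells of $\Sigma'$ from $e_I$ — this is explicit for $k$-splits in the discussion around Theorem~\ref{thm:main1}, and in general is encoded in the isomorphism of dual complexes of \cite[Theorem~7]{HerrmannJoswigSpeyer:2014} — so $e_I$ is a vertex of every maximal cell of $\Sigma$. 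As $\polytope(M)$ is a maximal cell, $e_I\in\polytope(M)$, that is, $\rank_M(I)=d$.

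The second step identifies $\Sigma'$. Since $e_I$ is the base vertex, $\Sigma'$ is the subdivision induced by $\Sigma$ on $\fig(e_I)$ (here one also uses that the non-vertex points of $\cQ_I$ are negligible in $\Sigma$, which holds because $\Sigma$ is a Stiefel image); and because $\Sigma$ is regular with lifting $\rho_M$, this induced subdivision is the regular subdivision of $\fig(e_I)$ whose height on the vertex corresponding to $e_{(I\setminus\{i\})\cup\{j\}}$, for $i\in I$ and $j\in[n]\setminus I$, equals $\rho_M\bigl(e_{(I\setminus\{i\})\cup\{j\}}\bigr)$ — the additive normalization being harmless, and in fact trivial here since $\rho_M(e_I)=d-\rank_M(I)=0$. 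Finally, $(I\setminus\{i\})\cup\{j\}$ contains the independent set $I\setminus\{i\}$ and has $d$ elements, so $d-1\le\rank_M\bigl((I\setminus\{i\})\cup\{j\}\bigr)\le d$ and hence $\rho_M\bigl(e_{(I\setminus\{i\})\cup\{j\}}\bigr)\in\{0,1\}$. Therefore $\Sigma'$ is realized by a $\{0,1\}$-valued lifting function.

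The main obstacle is the first step, together with the assertion in the second step that the subdivision induced on the vertex figure is realized by the literal restriction of $\rho_M$ and not merely by some lifting in the same secondary cone: both are structural facts about the tropical Stiefel map that I would have to pin down precisely from \cite[Theorem~7]{HerrmannJoswigSpeyer:2014}. A fallback that sidesteps them is to use that the matroids occurring as cells of Stiefel subdivisions are exactly the transversal ones (cf.\ \cite{FinkRincon:2015}), so $M$ is transversal; fixing a maximal presentation $(A_1,\dots,A_d)$ and a system of distinct representatives $I=\{x_1,\dots,x_d\}$, one takes the $\{0,1\}$-matrix $\mu(x_i,j)=0$ if $j\in A_i$ and $1$ otherwise, and verifies that its Stiefel image equals $\rho_M$ by the defect form of Hall's theorem, using $\rank_M(J)=|I\setminus A|+\rank_{M/(I\setminus A)}(B)$ for $A=I\setminus J$, $B=J\setminus I$ together with the fact that contracting $I\setminus A$ along the presentation leaves the transversal matroid presented by $(A_i)_{x_i\in A}$; that matching bookkeeping is the delicate point of the alternative.
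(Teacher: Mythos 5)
Your proposal is correct and follows essentially the same route as the paper: identify the common apex vertex $e_I$ of all maximal cells of a Stiefel image, note that $\polytope(M)$ is a maximal cell since $M$ is connected so $I$ is a basis, and conclude that the neighbours $e_{(I\setminus\{i\})\cup\{j\}}$ have corank $0$ or $1$, giving the $0/1$ lifting on the vertex figure. The transversal-matroid fallback is not needed and is not what the paper does.
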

\begin{proof}
   Clearly, the corank subdivision $\Sigma$ of the matroid $M$ is regular.
   Moreover, if $\Sigma$ is induced by the Stiefel map, then there is a vertex $v$ that is contained in each maximal cell.   
   The matroid polytope $\polytope(M)$ is a maximal cell as $M$ is connected.
   Hence, the vertex $v$ is a vertex of $\polytope(M)$ and the characteristic vector of a basis of $M$.
   This implies that the neighbours of $v$ are of corank $0$ and $1$.
   This shows that the restriction of the corank lifting to the neighbours of $v$ has the required form.
\end{proof}

We will apply Lemma~\ref{lem:pc_corank} to tropical point configurations. These are vectors in the tropical torus $\RR^d/(1,\ldots,1)\RR$. 
The line segment in the tropical torus between the two points $v$ and $w$ is the set $\smallSetOf{u\in \RR^d/(1,\ldots,1)\RR}{\lambda,\mu\in\RR \text{ and } u_i = \min (v_i+\lambda,\, w_i+\mu) }$.
Note that such a line segment consists of several ordinary line segments, with additional (pseudo-)vertices.
The \emph{tropical convex hull} of a set of points is the smallest set such that all line segments between points are in this set.
Such a tropical convex hull of finitely many points decomposes naturally in a polyhedral complex.
The cells in the tropical convex hull of a tropical point configuration of $(n-d)$ points in $\RR^d/(1,\ldots,1)\RR$ are in bijection with the cells of a regular subdivision of the product $\Delta_{d-1}\times\Delta_{n-d-1}$, where the height of $e_i+e_j$ is the $j$-th coordinate of the $i$-th point in the tropical point configuration; see \cite[Lemma 22]{DevelinSturmfels:2004}.
A tropical point configuration is \emph{tropically rigid} if it induces a coarsest (non-trivial) subdivision on the product of simplices $\Delta_{d-1}\times\Delta_{n-d-1}$.

A tropical point configuration corresponds to a corank subdivision if the points are realizable by $0/1$ coordinates in $\RR^d$ or equivalently by $-1$, $0$ and $1$ in the tropical torus.
In particular, there is a point that has lattice distance at most one to each other point.
This criteria certify that the next examples are not corank subdivisions.
\begin{figure}
   \begin{tikzpicture}[scale = 0.8,
                    color = {black}]

  \tikzstyle{linestyle} = [line cap=round, line join=roundi,line width=1 pt];
  \tikzstyle{extremal} = [fill=black];
  \tikzstyle{inner} = [black, line width=1 pt,fill=white];

  \coordinate (pa) at (8,0);
  \coordinate (pb) at (0,3);
  \coordinate (pc) at (4,0);
  \coordinate (pd) at (4,3);
  \coordinate (pe) at (0,0);
  \coordinate (pf) at (7,3);
  \coordinate (pi) at (8,6);

  \coordinate (a0) at ($(0, 0)+(pa)$);
  \coordinate (a1) at ($(1, 0)+(pa)$);
  \coordinate (a2) at ($(3, 1)+(pa)$);
  \coordinate (a3) at ($(1, 2)+(pa)$);
  \coordinate (a4) at ($(3, 2)+(pa)$);
  \coordinate (a5) at ($(2, 3)+(pa)$);
  \coordinate (ai) at ($(1,1)+(pa)$);
  \coordinate (aj) at ($(2,1)+(pa)$);
  \coordinate (ak) at ($(2,2)+(pa)$);
  \draw[linestyle] (ak) -- (a0) -- (a1) -- (a3) -- (a5) -- (aj);
  \draw[linestyle] (a3) -- (a4) -- (a2) -- (ai);
  \draw[linestyle] (a1) -- (a4);
  \fill[extremal] (a0) circle (2 pt);
  \fill[extremal] (a1) circle (2 pt);
  \fill[extremal] (a2) circle (2 pt);
  \fill[extremal] (a3) circle (2 pt);
  \fill[extremal] (a4) circle (2 pt);
  \fill[extremal] (a5) circle (2 pt);
  \draw[inner] (ai) circle (2 pt);
  \draw[inner] (aj) circle (2 pt);
  \draw[inner] (ak) circle (2 pt);

  \coordinate (b0) at ($(0, 0)+(pb)$);
  \coordinate (b1) at ($(1, 0)+(pb)$);
  \coordinate (b2) at ($(3, 1)+(pb)$);
  \coordinate (b3) at ($(1, 2)+(pb)$);
  \coordinate (b4) at ($(3, 2)+(pb)$);
  \coordinate (b5) at ($(2, 2)+(pb)$);
  \coordinate (bi) at ($(1, 1)+(pb)$);
  \coordinate (bj) at ($(2, 1)+(pb)$);
  \draw[linestyle] (b5) -- (b0) -- (b1) -- (b3) -- (b5) -- (bj);
  \draw[linestyle] (b3) -- (b4) -- (b2) -- (bi);
  \draw[linestyle] (b1) -- (b4);
  \fill[extremal] (b0) circle (2 pt);
  \fill[extremal] (b1) circle (2 pt);
  \fill[extremal] (b2) circle (2 pt);
  \fill[extremal] (b3) circle (2 pt);
  \fill[extremal] (b4) circle (2 pt);
  \fill[extremal] (b5) circle (2 pt);
  \draw[inner] (bi) circle (2 pt);
  \draw[inner] (bj) circle (2 pt);

  \coordinate (c0) at ($(0, 0)+(pc)$);
  \coordinate (c1) at ($(1, 0)+(pc)$);
  \coordinate (c2) at ($(3, 1)+(pc)$);
  \coordinate (c3) at ($(1, 1)+(pc)$);
  \coordinate (c4) at ($(3, 2)+(pc)$);
  \coordinate (c5) at ($(2, 2)+(pc)$);
  \coordinate (ci) at ($(2, 1)+(pc)$);
  \draw[linestyle] (c1)-- (c4) -- (c5) -- (c0) -- (c1) -- (c3) -- (c5) -- (ci);
  \draw[linestyle] (c4) -- (c2) -- (c3);
  \fill[extremal] (c0) circle (2 pt);
  \fill[extremal] (c1) circle (2 pt);
  \fill[extremal] (c2) circle (2 pt);
  \fill[extremal] (c3) circle (2 pt);
  \fill[extremal] (c4) circle (2 pt);
  \fill[extremal] (c5) circle (2 pt);
  \draw[inner] (ci) circle (2 pt);

  \coordinate (d0) at ($(0, 0)+(pd)$);
  \coordinate (d1) at ($(1, 0)+(pd)$);
  \coordinate (d2) at ($(2, 1)+(pd)$);
  \coordinate (d3) at ($(1, 1)+(pd)$);
  \coordinate (d4) at ($(3, 2)+(pd)$);
  \coordinate (d5) at ($(2, 2)+(pd)$);
  \draw[linestyle] (d1)-- (d4) -- (d5) -- (d0) -- (d1) -- (d3) -- (d5);
  \draw[linestyle] (d4) -- (d2) -- (d3);
  \draw[linestyle] (d2) -- (d5);
  \fill[extremal] (d0) circle (2 pt);
  \fill[extremal] (d1) circle (2 pt);
  \fill[extremal] (d2) circle (2 pt);
  \fill[extremal] (d3) circle (2 pt);
  \fill[extremal] (d4) circle (2 pt);
  \fill[extremal] (d5) circle (2 pt);

  \coordinate (e0) at ($(0, 0)+(pe)$);
  \coordinate (e1) at ($(1, 0)+(pe)$);
  \coordinate (e2) at ($(2, 1)+(pe)$);
  \coordinate (e3) at ($(1, 2)+(pe)$);
  \coordinate (e4) at ($(3, 2)+(pe)$);
  \coordinate (e5) at ($(2, 2)+(pe)$);
  \coordinate (ei) at ($(1, 1)+(pe)$);
  \draw[linestyle] (ei) -- (e2) -- (e5)-- (e0) -- (e1) -- (e4) -- (e3) -- (e1);
  \fill[extremal] (e0) circle (2 pt);
  \fill[extremal] (e1) circle (2 pt);
  \fill[extremal] (e2) circle (2 pt);
  \fill[extremal] (e3) circle (2 pt);
  \fill[extremal] (e4) circle (2 pt);
  \fill[extremal] (e5) circle (2 pt);
  \draw[inner] (ei) circle (2 pt);

  \coordinate (f0) at ($(1, 1)+(pf)$);
  \coordinate (f1) at ($(1, 0)+(pf)$);
  \coordinate (f2) at ($(2, 1)+(pf)$);
  \coordinate (f3) at ($(1, 2)+(pf)$);
  \coordinate (f4) at ($(3, 2)+(pf)$);
  \coordinate (f5) at ($(2, 2)+(pf)$);
  \draw[linestyle] (f0) -- (f2) -- (f5)-- (f0) -- (f1) -- (f4) -- (f3) -- (f1);
  \fill[extremal] (f0) circle (2 pt);
  \fill[extremal] (f1) circle (2 pt);
  \fill[extremal] (f2) circle (2 pt);
  \fill[extremal] (f3) circle (2 pt);
  \fill[extremal] (f4) circle (2 pt);
  \fill[extremal] (f5) circle (2 pt);

  \coordinate (i0) at ($(0, 0)+(pi)$);
  \coordinate (i1) at ($(2, 0)+(pi)$);
  \coordinate (i2) at ($(0, 1)+(pi)$);
  \coordinate (i3) at ($(2, 2)+(pi)$);
  \coordinate (i4) at ($(3, 1)+(pi)$);
  \coordinate (i5) at ($(3, 2)+(pi)$);
  \coordinate (ii) at ($(1, 1)+(pi)$);
  \coordinate (ij) at ($(1, 0)+(pi)$);
  \coordinate (ik) at ($(2, 1)+(pi)$);
  \draw[linestyle] (i0) -- (i1) -- (i4) -- (i2) -- (i0) -- (i3) -- (i1);
  \draw[linestyle] (i3) -- (i5) -- (i4);
  \draw[linestyle] (ij) -- (i5);
  \fill[extremal] (i0) circle (2 pt);
  \fill[extremal] (i1) circle (2 pt);
  \fill[extremal] (i2) circle (2 pt);
  \fill[extremal] (i3) circle (2 pt);
  \fill[extremal] (i4) circle (2 pt);
  \fill[extremal] (i5) circle (2 pt);
  \draw[inner] (ii) circle (2 pt);
  \draw[inner] (ij) circle (2 pt);
  \draw[inner] (ik) circle (2 pt);

  \coordinate (g0) at ($(0, 0)+(0,6)$);
  \coordinate (g1) at ($(0, 1)+(0,6)$);
  \coordinate (g2) at ($(1, 0)+(0,6)$);
  \coordinate (g3) at ($(1, 1)+(0,6)$);
  \coordinate (g4) at ($(1, 2)+(0,6)$);
  \coordinate (g5) at ($(3, 2)+(0,6)$);
  \coordinate (gi) at ($(2, 2)+(0,6)$);
  \draw[linestyle] (g0) -- (g1) -- (g4) -- (g5) -- (g2) -- (g4);
  \draw[linestyle] (g2) -- (g0) -- (gi);
  \draw[linestyle] (g1) -- (g3);
  \fill[extremal] (g0) circle (2 pt);
  \fill[extremal] (g1) circle (2 pt);
  \fill[extremal] (g2) circle (2 pt);
  \fill[extremal] (g3) circle (2 pt);%
  \fill[extremal] (g4) circle (2 pt);
  \fill[extremal] (g5) circle (2 pt);
  \draw[inner] (gi) circle (2 pt);

  \coordinate (h0) at ($(0, 0)+(4,6)$);
  \coordinate (h1) at ($(2, 0)+(4,6)$);
  \coordinate (h2) at ($(2, 1)+(4,6)$);
  \coordinate (h3) at ($(2, 2)+(4,6)$);
  \coordinate (h4) at ($(3, 1)+(4,6)$);
  \coordinate (h5) at ($(3, 2)+(4,6)$);
  \coordinate (hi) at ($(1, 1)+(4,6)$);
  \coordinate (hj) at ($(1, 0)+(4,6)$);
  \draw[linestyle] (hj) -- (h5) -- (h4) -- (h1) -- (h0) -- (hi) -- (h4);
  \draw[linestyle] (h1) -- (h3) -- (h5);
  \draw[linestyle] (hi) -- (h3);
  \fill[extremal] (h0) circle (2 pt);
  \fill[extremal] (h1) circle (2 pt);
  \fill[extremal] (h2) circle (2 pt);%
  \fill[extremal] (h3) circle (2 pt);
  \fill[extremal] (h4) circle (2 pt);
  \fill[extremal] (h5) circle (2 pt);
  \draw[inner] (hi) circle (2 pt);
  \draw[inner] (hj) circle (2 pt);

\end{tikzpicture}
   \caption{The nine rigid tropical point configurations of Example~\ref{ex:nine_tpc}, each of which is a tropical convex hull of six points.}
  \label{fig:troppointconf}
\end{figure}
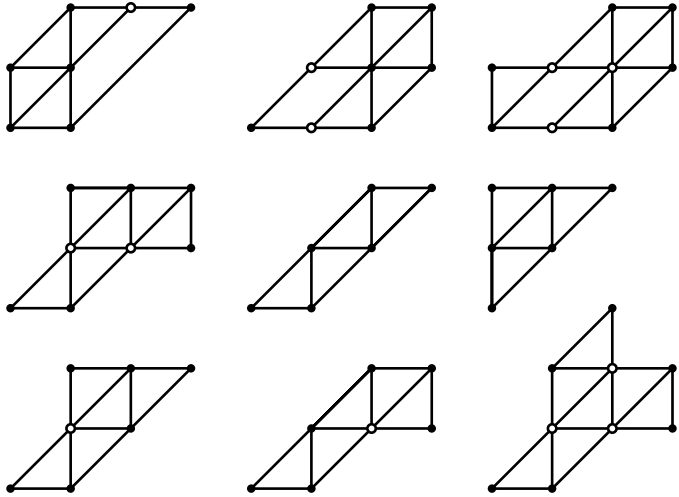

The following illustrates examples of coarsest non-corank subdivisions. 
\begin{example}\label{ex:nine_tpc}
   Figure~\ref{fig:troppointconf} shows nine rigid tropical point configurations out of $36$ symmetry classes.
   They correspond to nine coarsest subdivisions of $\Delta_2\times\Delta_5$.
   The Stiefel map of those induces coarsest matroid subdivisions of the hypersimplex $\Delta(3,9)$.
   None of those is a  corank subdivision.
   Proposition~\ref{prop:comp} shows that these are all rigid tropical point configurations that do not lift to a corank subdivision of the hypersimplex $\Delta(3,9)$.
\end{example}
We lifted those to rays of the hypersimplex $\Delta(3,9)$ and checked whether they are equivalent to corank liftings.
For this computation we used both the software \polymake \cite{DMV:polymake} and \mptopcom \cite{JordanJoswigKastner:2017}. 
Before we state our computational result, note that there is a natural symmetry action of the symmetric group on $n$ elements on the hypersimplex $\Delta(d,n)$.
This group acts on the hypersimplex, by permutation of the coordinate directions.
From our computations we got the following result.

\begin{proposition}\label{prop:comp} The nine liftings illustrated as tropical point configuration in Figure~\ref{fig:troppointconf} lead to coarsest regular subdivisions of $\Delta(3,9)$.
These are, up to symmetry, all coarsest regular subdivisions of $\Delta(3,9)$ that are induced by the Stiefel map and not by a corank lift.
\end{proposition}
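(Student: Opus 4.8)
The proposition records a finite computation, and the plan is to organise it through the passage to products of simplices from Section~\ref{sec:splits_hypersimplex}. First I would enumerate all coarsest non-trivial regular subdivisions of the product $\Delta_2\times\Delta_5=\Delta_{d-1}\times\Delta_{n-d-1}$ with $d=3$ and $n=9$. By \cite[Lemma 22]{DevelinSturmfels:2004} these correspond bijectively to the rigid tropical point configurations of six points in the tropical torus $\RR^3/(1,\ldots,1)\RR$, and they can be produced by a reverse-search traversal of the secondary fan with \mptopcom \cite{JordanJoswigKastner:2017}. Reducing modulo the symmetry $\Sym([3])\times\Sym([6])$, which permutes the three coordinates and the six points, one obtains exactly $36$ classes, nine of which are drawn in Figure~\ref{fig:troppointconf}.

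Next I would push each of the $36$ classes through the Stiefel map. Since the Stiefel map preserves the dual complex \cite[Theorem 7]{HerrmannJoswigSpeyer:2014}, each image is a coarsest matroid subdivision of $\Delta(3,9)$, and, for the same reason, no non-coarsest subdivision of $\Delta_2\times\Delta_5$ can map to a coarsest non-trivial subdivision of the hypersimplex; hence every coarsest regular subdivision of $\Delta(3,9)$ that is induced by the Stiefel map is the Stiefel image of a coarsest subdivision of $\Delta_2\times\Delta_5$, and thus $\Sym([9])$-equivalent to one of the $36$ subdivisions just constructed. It remains to decide which of these $36$ are corank subdivisions.

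For the nine configurations of Figure~\ref{fig:troppointconf} I would invoke the obstruction furnished by Lemma~\ref{lem:corank} and Lemma~\ref{lem:pc_corank}: if one of these subdivisions were the corank subdivision of a matroid, then the inducing tropical point configuration would be realisable by coordinates in $\{-1,0,1\}$, so some point would have lattice distance at most one to every other point. Reading this off the pictures shows that no such central point exists, so none of the nine is a corank subdivision; one also checks that the nine corresponding rays of $\Dr(3,9)$ lie in distinct $\Sym([9])$-orbits. For the remaining $27$ classes I would exhibit, in each case, a $(3,9)$-matroid whose corank vector spans the ray of the Stiefel image modulo lineality, certifying that these $27$ subdivisions are corank subdivisions. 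The enumeration together with both certifications is carried out, and independently cross-checked, with \polymake \cite{DMV:polymake} and \mptopcom.

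The main obstacle is exhaustiveness and bookkeeping rather than a single hard step. One has to certify that the reverse search over the secondary fan of $\Delta_2\times\Delta_5$ is complete, keep the two symmetry groups --- $\Sym([3])\times\Sym([6])$ on the product side and $\Sym([9])$ on the hypersimplex side --- consistent when matching up classes, and, for the $27$ positive cases, locate a suitable matroid efficiently, using Lemma~\ref{lem:corank} to prune rather than scanning the corank vectors of all $(3,9)$-matroids. Once this infrastructure is in place each individual comparison reduces to a finite linear-algebra check on secondary cones.
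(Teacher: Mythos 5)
Your proposal matches the paper's approach: the paper gives no formal proof for this proposition but describes exactly this computation in the surrounding text --- enumerate the $36$ symmetry classes of rigid tropical point configurations (coarsest subdivisions of $\Delta_2\times\Delta_5$), lift them to rays of $\Delta(3,9)$ via the Stiefel map, and decide coranknness with \polymake and \mptopcom, using the $\{-1,0,1\}$-realizability criterion derived from Lemma~\ref{lem:pc_corank} to certify that the nine pictured configurations are not corank subdivisions. Your additional care about the two symmetry groups and about why coarsest subdivisions of the product suffice is a reasonable elaboration of what the paper leaves implicit, not a different route.
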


We will close with two enumerative results about the number of coarsest regular matroid subdivisions of the hypersimplex $\Delta(d,n)$ for small parameters $d$ and $n$.
With the previously mentioned methods we have computed all coarsest regular subdivisions of $\Delta_{d-1}\times\Delta_{n-d-1}$ for small parameters of $d$ and $n$ and lifted them to the hypersimplex. 
Note that this is a massive computation, as there are $7402421$ symmetry classes of triangulations for the product $\Delta_{3}\times\Delta_{4}$ and the acting symmetric group has $9!$ elements.
Another example is $\Delta_{2}\times\Delta_{6}$ where the number of symmetry classes of triangulations in the regular flip component is $533242$ and the group has $10!=3628800$ elements.
For each symmetry class a convex hull computation is necessary and after that another reduction that checks for symmetry.

  The number of all these subdivisions up to symmetry is listed in Table~\ref{tab:stiefel} on the last page.
Note that we do not count the number of coarsest regular subdivisions of $\Delta_{d-1}\times\Delta_{n-d-1}$.

For our second result we computed all corank subdivisions for all matroids in the \polymake database available at \href{https://db.polymake.org}{db.polymake.org} . This database is based on a classification of matroids of small rank with few elements of Matsumoto, Moriyama, Imai and Bremner \cite{Matsumotoetal:2012}.
We got the coarsest subdivisions by computing the secondary cones.
The number of all of these subdivisions is given in Table~\ref{tab:corank}. 

Combining both techniques we got the following result.
\begin{proposition}
   The number of coarsest matroid subdivisions of $\Delta(d,n)$ for $d\leq 4$ and $n\leq 10$, excluded $d=4$, $n=10$, is bounded from below by the numbers listed in Table~\ref{tab:total}.
\end{proposition}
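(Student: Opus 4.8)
The plan is to produce, for each pair $(d,n)$ in the stated range, two families of coarsest matroid subdivisions of $\Delta(d,n)$ whose members can be enumerated by computer, and then to count the union of the two families. Write $\snow$ for the set of subdivisions of $\Delta(d,n)$ obtained as Stiefel images of the coarsest regular subdivisions of the product $\Delta_{d-1}\times\Delta_{n-d-1}$: such an image is a matroid subdivision by the proposition of \cite{Rincon:2013} and \cite{HerrmannJoswigSpeyer:2014}, and it is again coarsest because by \cite[Theorem~7]{HerrmannJoswigSpeyer:2014} the Stiefel map preserves the dual complex; conversely, since the $S_n$-action is transitive on the vertices of $\Delta(d,n)$ and every vertex figure is isomorphic to the product, every coarsest Stiefel-induced subdivision of $\Delta(d,n)$ is, up to symmetry, the lift of a coarsest subdivision of the product, so this enumeration is complete. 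Write $\charSet$ for the set of coarsest matroid subdivisions of $\Delta(d,n)$ that arise as rays of the secondary cone of a corank vector $\rho_M$ of a $(d,n)$-matroid $M$; these are matroid subdivisions because $\rho_M$ is a tropical Pl\"ucker vector. Since $\snow\cup\charSet$ is a set of coarsest matroid subdivisions of $\Delta(d,n)$, its cardinality is a lower bound for the total number of coarsest matroid subdivisions; it is only a lower bound because a priori there can be coarsest matroid subdivisions that are neither Stiefel images nor corank subdivisions.

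First I would compute $\size\snow$: enumerate the coarsest regular subdivisions of $\Delta_{d-1}\times\Delta_{n-d-1}$ with \mptopcom \cite{JordanJoswigKastner:2017}, apply the Stiefel map, pass to the hypersimplex, and reduce modulo the natural $S_n$-symmetry with \polymake \cite{DMV:polymake}; this yields the entries of Table~\ref{tab:stiefel} (note that, as remarked in the text, distinct coarsest subdivisions of the product may lift to $S_n$-equivalent subdivisions of $\Delta(d,n)$, so the count drops in the passage). Next I would compute $\size\charSet$: run over the matroids of rank at most $4$ on at most $10$ elements in the \polymake database, which rests on the classification of \cite{Matsumotoetal:2012}, form the corank vector of each, compute its secondary cone, read off the coarsest subdivisions the corank subdivision coarsens to, and again reduce modulo $S_n$; this yields the entries of Table~\ref{tab:corank}.

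It then remains to determine the overlap, using $\size(\snow\cup\charSet)=\size\snow+\size\charSet-\size(\snow\cap\charSet)$. For every representative in $\charSet$ I would test with \polymake and \mptopcom whether it is $S_n$-equivalent to a representative in $\snow$; this is a finite check, since both lists carry the full combinatorial data of the subdivisions, and Table~\ref{tab:total} records the resulting value $\size\snow+\size\charSet-\size(\snow\cap\charSet)$. For the single pair $(d,n)=(3,9)$ the overlap is already pinned down abstractly by Proposition~\ref{prop:comp}: exactly nine symmetry classes, namely the tropical point configurations of Figure~\ref{fig:troppointconf}, lie in $\snow\setminus\charSet$, so $\size(\snow\cap\charSet)=\size\snow-9$ there, which serves as a consistency check on the computation.

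The main obstacle is the size of the computation, not any conceptual difficulty: the product $\Delta_3\times\Delta_4$ has $7402421$ symmetry classes of triangulations and $\Delta_2\times\Delta_6$ has $533242$ of them in the regular flip component, with symmetry groups of orders $9!$ and $10!$, so both the enumeration of the coarsest subdivisions of these products and the ensuing convex-hull and symmetry-reduction steps are heavy, as is matching the corank list against the Stiefel list modulo such large groups; this is also why the pair $d=4$, $n=10$ is excluded, as it would require the intractable product $\Delta_3\times\Delta_5$. A secondary point that must be checked rather than assumed is that the Stiefel lift of a coarsest subdivision of the product is coarsest on the whole hypersimplex and, conversely, that restricting a coarsest Stiefel-induced subdivision to a vertex figure gives a coarsest subdivision of the product; both directions follow from the dual-complex isomorphism \cite[Theorem~7]{HerrmannJoswigSpeyer:2014}, which is compatible with coarsening. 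Granting these, the numbers of Table~\ref{tab:total} are valid lower bounds.
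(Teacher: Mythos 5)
Your proposal reconstructs exactly the paper's argument: the paper likewise obtains Table~\ref{tab:stiefel} by lifting the coarsest regular subdivisions of $\Delta_{d-1}\times\Delta_{n-d-1}$ through the Stiefel map and reducing modulo the $S_n$-action, obtains Table~\ref{tab:corank} from the secondary cones of corank vectors of the matroids in the \polymake database of \cite{Matsumotoetal:2012}, and takes the union of the two (necessarily incomplete) families to get the lower bounds of Table~\ref{tab:total}, with the $(3,9)$ overlap pinned down by Proposition~\ref{prop:comp} just as you note. The only caveats you raise (coarsest-ness of the lifts, the size of the computation, the exclusion of $d=4$, $n=10$) are the same ones the paper handles computationally, so this is essentially the same proof.
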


\begin{table}[t]
  \centering
   \caption{Numbers of symmetry classes of coarsest matroid subdivisions in the hypersimplex $\Delta(d,n)$.}
  \begin{subtable}[c]{0.38\textwidth}
     \subcaption{The number in the Stiefel image.\label{tab:stiefel}}
     \begin{tabular}{l@{\hskip 2.5mm}rrrrrrr}
        \toprule
        $d\backslash n$ & 4 & 5 & 6 & 7 & 8 & 9 & 10\\
        \midrule
        2&   1 & 1& 2& 2&  3&  3&  4\\
        3&     & 1& 3& 5& 11& 36& 207\\
        4&     &  & 2& 5& 39& 2949& --\\
        \bottomrule
     \end{tabular}

  \end{subtable}
   \hspace{1.5cm}
  \begin{subtable}[c]{0.38\textwidth}
     \subcaption{The number of corank subdivisions.\label{tab:corank}}
     \begin{tabular}{l@{\hskip 2.5mm}rrrrrrr}
        \toprule
        $d\backslash n$ & 4 & 5 & 6 & 7 & 8 & 9 & 10\\
        \midrule
        2&     1 & 1& 2& 2&  3&   3&   4\\
        3&       & 1& 3& 5& 12&  38& 139\\
        4&       &  & 2& 5& 33& 356&  --\\
        \bottomrule
     \end{tabular}
  \end{subtable}
\end{table}

\begin{table}[t]
  \centering
   \caption{The number of coarsest matroid subdivisions in $\Delta(d,n)$ that are either corank subdivisions or in the image of the Stiefel map.}
  \label{tab:total}
  \begin{subtable}[c]{0.58\textwidth}
     \subcaption{The number without any identifications.}
     \begin{tabular}{l@{\hskip 2.5mm}rrrrrrr}
        \toprule
        $d\backslash n$ & 4 & 5 & 6 & 7 & 8 & 9 & 10\\
        \midrule
         2&     3& 10& 25&   56&    119&      246&      501\\
         3&      & 10& 65&  616&  15470&  1220822& 167763972\\
         4&      &   & 25&  616& 217945& 561983523 & --\\
        \bottomrule
     \end{tabular}
  \end{subtable}
   \newline
  \rule{0pt}{2.1cm}
   \begin{subtable}[c]{0.38\textwidth}
     \subcaption{The number of symmetry classes.}
     \begin{tabular}{l@{\hskip 2.5mm}rrrrrrr}
        \toprule
        $d\backslash n$ & 4 & 5 & 6 & 7 & 8 & 9 & 10\\
        \midrule
        2&   1 & 1& 2& 2&  3&   3&   4\\
        3&     & 1& 3& 5& 12&  47& 287\\
        4&     &  & 2& 5& 43&3147& --\\
        \bottomrule
     \end{tabular}
  \end{subtable}
\end{table}

\bigskip
\medskip
\noindent {\bf Acknowledgements.}\ I am indebted to Michael Joswig and Georg Loho for various helpful suggestions.
My research is carried out in the framework of Matheon supported by Einstein Foundation Berlin (Project \enquote{MI6 - Geometry of Equilibria for Shortest Path}).

\bibliographystyle{alpha}
\bibliography{References}

\begin{thebibliography}{MMIB12}

\bibitem[BD92]{BandeltDress:1992}
Hans-J\"urgen Bandelt and Andreas W.~M. Dress.
\newblock A canonical decomposition theory for metrics on a finite set.
\newblock {\em Adv. Math.}, 92(1):47--105, 1992.

\bibitem[BdM08]{BoninMier:2008}
Joseph~E. Bonin and Anna de~Mier.
\newblock The lattice of cyclic flats of a matroid.
\newblock {\em Ann. Comb.}, 12(2):155--170, 2008.

\bibitem[CSZ15]{CeballosSantosZiegler:2015}
Cesar Ceballos, Francisco Santos, and G\"unter~M. Ziegler.
\newblock Many non-equivalent realizations of the associahedron.
\newblock {\em Combinatorica}, 35(5):513--551, 2015.

\bibitem[DLRS10]{LoeraRambauSantos:2010}
Jes{\'u}s~A.\ De~Loera, J{\"o}rg Rambau, and Francisco Santos.
\newblock {\em Triangulations}, volume~25 of {\em Algorithms and Computation in
  Mathematics}.
\newblock Springer-Verlag, Berlin, 2010.
\newblock Structures for algorithms and applications.

\bibitem[DS04]{DevelinSturmfels:2004}
Mike Develin and Bernd Sturmfels.
\newblock Tropical convexity.
\newblock {\em Doc. Math.}, 9:1--27 (electronic), 2004.
\newblock Erratum \textit{ibid.}, pp.~205--206.

\bibitem[DW92]{DressWenzel:1992}
Andreas W.~M. Dress and Walter Wenzel.
\newblock Valuated matroids.
\newblock {\em Adv. Math.}, 93(2):214--250, 1992.

\bibitem[Edm70]{Edmonds:1970}
Jack Edmonds.
\newblock Submodular functions, matroids, and certain polyhedra.
\newblock In {\em Combinatorial {S}tructures and their {A}pplications ({P}roc.
  {C}algary {I}nternat. {C}onf., {C}algary, {A}lta., 1969)}, pages 69--87.
  Gordon and Breach, New York, 1970.

\bibitem[FR15]{FinkRincon:2015}
Alex Fink and Felipe Rinc{\'o}n.
\newblock Stiefel tropical linear spaces.
\newblock {\em J. Combin.\ Theory Ser.~A}, 135:291--331, 2015.

\bibitem[FS05]{FeichtnerSturmfels:2005}
Eva~Maria Feichtner and Bernd Sturmfels.
\newblock Matroid polytopes, nested sets and {B}ergman fans.
\newblock {\em Port. Math. (N.S.)}, 62(4):437--468, 2005.

\bibitem[Fuj84]{Fujishige:1984}
Satoru Fujishige.
\newblock A characterization of faces of the base polyhedron associated with a
  submodular system.
\newblock {\em J. Oper. Res. Soc. Japan}, 27(2):112--129, 1984.

\bibitem[GJ00]{DMV:polymake}
Ewgenij Gawrilow and Michael Joswig.
\newblock \polymake: a framework for analyzing convex polytopes.
\newblock In {\em Polytopes---combinatorics and computation (Oberwolfach,
  1997)}, volume~29 of {\em DMV Sem.}, pages 43--73. Birk\-h\"au\-ser, Basel,
  2000.

\bibitem[GKZ08]{GelfandKapranovZelevinsky:1994}
I.~M. Gelfand, M.~M. Kapranov, and A.~V. Zelevinsky.
\newblock {\em Discriminants, resultants and multidimensional determinants}.
\newblock Modern Birkh\"auser Classics. Birkh\"auser Boston, Inc., Boston, MA,
  2008.
\newblock Reprint of the 1994 edition.

\bibitem[Ham17]{Hampe:2017}
Simon Hampe.
\newblock The intersection ring of matroids.
\newblock {\em J. Combin. Theory Ser. B}, 122:578--614, 2017.

\bibitem[Her11]{Herrmann:2011}
Sven Herrmann.
\newblock On the facets of the secondary polytope.
\newblock {\em J. Combin.\ Theory Ser.~A}, 118(2):425--447, 2011.

\bibitem[Hir06]{Hirai:2006}
Hiroshi Hirai.
\newblock A geometric study of the split decomposition.
\newblock {\em Discrete Comput. Geom.}, 36(2):331--361, 2006.

\bibitem[HJ08]{HerrmannJoswig:2008}
Sven Herrmann and Michael Joswig.
\newblock Splitting polytopes.
\newblock {\em M\"unster J. Math.}, 1:109--141, 2008.

\bibitem[HJS14]{HerrmannJoswigSpeyer:2014}
Sven Herrmann, Michael Joswig, and David Speyer.
\newblock {D}ressians, tropical {G}rassmannians and their rays.
\newblock {\em Forum Mathematicum}, pages 389--411, 2014.

\bibitem[JJK17]{JordanJoswigKastner:2017}
Charles Jordan, Michael Joswig, and Lars Kastner.
\newblock Multi-parallel enumeration of triangulations, 2017.
\newblock In preparation.

\bibitem[JS17]{JoswigSchroeter:2017}
Michael Joswig and Benjamin Schr\"oter.
\newblock Matroids from hypersimplex splits.
\newblock {\em J. Combin. Theory Ser. A}, 151:254--284, 2017.

\bibitem[Koi14]{Koichi:2014}
Shungo Koichi.
\newblock The {B}uneman index via polyhedral split decomposition.
\newblock {\em Adv. in Appl. Math.}, 60:1--24, 2014.

\bibitem[MMIB12]{Matsumotoetal:2012}
Yoshitake Matsumoto, Sonoko Moriyama, Hiroshi Imai, and David Bremner.
\newblock Matroid enumeration for incidence geometry.
\newblock {\em Discrete Comput. Geom.}, 47(1):17--43, 2012.

\bibitem[Oxl11]{Oxley:2011}
James Oxley.
\newblock {\em Matroid theory}, volume~21 of {\em Oxford Graduate Texts in
  Mathematics}.
\newblock Oxford University Press, Oxford, second edition, 2011.

\bibitem[Rin13]{Rincon:2013}
Felipe Rinc{\'o}n.
\newblock Local tropical linear spaces.
\newblock {\em Discrete Comput. Geom.}, 50(3):700--713, 2013.

\bibitem[Sch86]{Schrijver:1986}
Alexander Schrijver.
\newblock {\em Theory of linear and integer programming}.
\newblock Wiley-Interscience Series in Discrete Mathematics. John Wiley \&
  Sons, Ltd., Chichester, 1986.
\newblock A Wiley-Interscience Publication.

\bibitem[Spe05]{Speyer:2005}
David~E. Speyer.
\newblock {\em Tropical geometry}.
\newblock ProQuest LLC, Ann Arbor, MI, 2005.
\newblock Thesis (Ph.D.)--University of California, Berkeley.

\bibitem[SS04]{SpeyerSturmfels:2004}
David Speyer and Bernd Sturmfels.
\newblock The tropical {G}rassmannian.
\newblock {\em Adv. Geom.}, 4(3):389--411, 2004.

\bibitem[Whi86]{White:1986}
Neil White, editor.
\newblock {\em Theory of matroids}, volume~26 of {\em Encyclopedia of
  Mathematics and its Applications}.
\newblock Cambridge University Press, Cambridge, 1986.

\bibitem[Zie00]{Ziegler:2000}
G{\"u}nter~M. Ziegler.
\newblock Lectures on {$0/1$}-polytopes.
\newblock In {\em Polytopes---combinatorics and computation ({O}berwolfach,
  1997)}, volume~29 of {\em DMV Sem.}, pages 1--41. Birkh\"auser, Basel, 2000.

\end{thebibliography}

\end{document}